\newcommand\mtop{1in}
\newcommand\mbottom{1in}
\newcommand\mleft{1.2in}
\newcommand\mright{1.2in}
\providecommand{\mparwidth}{1in}
\providecommand{\mtop}{1in}
\providecommand{\mbottom}{1in}
\providecommand{\mleft}{1.2in}
\providecommand{\mright}{1.2in}
\setlist[enumerate]{itemsep=7pt}
\setlist[itemize,1]{leftmargin=4ex,topsep=0em}
\setlist[itemize]{itemsep=5pt}
\setlist[itemize,2]{label=$\circ$}
\setlist[itemize,3]{label={\scalefont{0.6}\color{gray}$\blacktriangleright$}}
\setlist[itemize,4]{label=$\ast$}
\setlist{nolistsep}
\DeclareMathOperator*{\holim}{holim}
\definecolor{violet}{rgb}{0.56, 0.0, 1.0}
\newcommand{\MZ}{H\mathbb{Z}}
\newcommand{\ko}{\mathrm{ko}}
\newcommand{\KO}{\mathrm{KO}}
\newcommand{\KQ}{\mathrm{KQ}}
\newcommand{\ksp}{\mathrm{ksp}}
\begin{document}

\newcommand{\theoremnumstyle}{section}
\SelectTips{cm}{10}  

\allowdisplaybreaks 
\raggedbottom 

\newcommand*{\replacecommand}[1]{%
  \providecommand{#1}{}%
  \renewcommand{#1}%
}

\renewcommand{\arraystretch}{1.5}

\renewcommand{\top}{\mathrm{top}}

\renewcommand{\l}{\overset}
\newcommand{\into}{\hookrightarrow}
\newcommand{\onto}{\twoheadrightarrow}
\newcommand{\tto}{\longrightarrow}
\newcommand{\too}[1]{\l{#1}\to}
\newcommand{\ttoo}[1]{\l{#1}\longrightarrow}
\newcommand{\intoo}[1]{\l{#1}\into}
\newcommand{\ontoo}[1]{\l{#1}\onto}
\newcommand{\mapstoo}[1]{\l{#1}\mapsto}
\newcommand{\bto}{\leftarrow}
\newcommand{\btto}{\longleftarrow}
\newcommand{\btoo}[1]{\l{#1}\bto}
\newcommand{\bttoo}[1]{\l{#1}\longleftarrow}
\newcommand{\binto}{\hookleftarrow}
\newcommand{\bonto}{\twoheadleftarrow}
\newcommand{\bintoo}[1]{\l{#1}\binto}
\newcommand{\bontoo}[1]{\l{#1}\bonto}
\newcommand{\ointo}{\hspace{3pt}\text{\raisebox{-1.5pt}{$\overset{\circ}{\vphantom{}\smash{\text{\raisebox{1.5pt}{$\into$}}}}$}}\hspace{3pt}}
\newcommand{\lu}{\underset}
\newcommand{\bimplies}{\impliedby}
\newcommand{\ints}{\cap}
\newcommand{\intss}{\bigcap}
\newcommand{\union}{\cup}
\newcommand{\unions}{\bigcup}
\newcommand{\djunion}{\sqcup}
\newcommand{\djunions}{\bigsqcup}
\newcommand{\propersubset}{\subsetneq}
\newcommand{\propersupset}{\supsetneq}
\newcommand{\contains}{\supseteq}
\newcommand{\semidirect}{\rtimes}
\newcommand{\isom}{\cong}
\newcommand{\normal}{\triangleleft}
\replacecommand{\dsum}{\oplus}
\newcommand{\dsums}{\bigoplus}
\newcommand{\tensor}{\otimes}
\newcommand{\tensors}{\bigotimes}
\newcommand{\cotensor}{{\,\scriptstyle\square}}
\let\originalbar\bar
\renewcommand{\bar}[1]{{\overline{#1}}}
\newcommand{\rlim}{\mathop{\varinjlim}\limits}
\newcommand{\llim}{\mathop{\varprojlim}\limits}
\newcommand{\x}{\times}
\replacecommand{\st}{\hspace{2pt} : \hspace{2pt}} 
\newcommand{\vv}{\vspace{10pt}}
\newcommand{\til}{\widetilde}
\renewcommand{\hat}{\widehat}
\newcommand{\hhat}{\wedge}
\newcommand{\iy}{\infty}
\newcommand{\hteq}{\simeq}
\newcommand{\dd}[2]{\frac{\partial #1}{\partial #2}}
\newcommand{\sm}{\wedge} 
\newcommand{\noqed}{\renewcommand{\qedsymbol}{}}
\newcommand{\adjoint}{\dashv}
\newcommand{\wreath}{\wr}
\newcommand{\heart}{\heartsuit}

\newcommand{\bigast}{\mathop{\vphantom{\sum}\mathchoice%
  {\vcenter{\hbox{\huge *}}}
  {\vcenter{\hbox{\Large *}}}{*}{*}}\displaylimits}

\renewcommand{\dim}{\operatorname{dim}}
\newcommand{\diam}{\operatorname{diam}}
\newcommand{\coker}{\operatorname{coker}}
\newcommand{\im}{\operatorname{im}}
\newcommand{\disc}{\operatorname{disc}}
\newcommand{\Pic}{\operatorname{Pic}}
\newcommand{\Der}{\operatorname{Der}}
\newcommand{\ord}{\operatorname{ord}}
\newcommand{\nil}{\operatorname{nil}}
\newcommand{\rad}{\operatorname{rad}}
\newcommand{\ssum}{\operatorname{sum}}
\newcommand{\codim}{\operatorname{codim}}
\newcommand{\cchar}{\operatorname{char}}
\newcommand{\sspan}{\operatorname{span}}
\newcommand{\rank}{\operatorname{rank}}
\newcommand{\Aut}{\operatorname{Aut}}
\newcommand{\Out}{\operatorname{Out}}
\newcommand{\Div}{\operatorname{Div}}
\newcommand{\Gal}{\operatorname{Gal}}
\newcommand{\Hom}{\operatorname{Hom}}
\newcommand{\Mor}{\operatorname{Mor}}
\newcommand{\Vect}{\operatorname{Vect}}
\newcommand{\Fun}{\operatorname{Fun}}
\newcommand{\Iso}{\operatorname{Iso}}
\newcommand{\Map}{\operatorname{Map}}
\newcommand{\Ho}{\operatorname{Ho}}
\newcommand{\Mod}{\operatorname{Mod}}
\newcommand{\Tot}{\operatorname{Tot}}
\newcommand{\cofib}{\operatorname{cofib}}
\newcommand{\fib}{\operatorname{fib}}
\newcommand{\hocofib}{\operatorname{hocofib}}
\newcommand{\hofib}{\operatorname{hofib}}
\newcommand{\Maps}{\operatorname{Maps}}
\newcommand{\Sym}{\operatorname{Sym}}
\newcommand{\Diff}{\operatorname{Diff}}
\newcommand{\Tr}{\operatorname{Tr}}
\newcommand{\Frac}{\operatorname{Frac}}
\renewcommand{\Re}{\operatorname{Re}} 
\renewcommand{\Im}{\operatorname{Im}}
\newcommand{\gr}{\operatorname{gr}}
\newcommand{\tr}{\operatorname{tr}}
\newcommand{\End}{\operatorname{End}}
\newcommand{\Mat}{\operatorname{Mat}}
\newcommand{\Proj}{\operatorname{Proj}}
\newcommand{\Th}{\operatorname{Thom}}
\newcommand{\Thom}{\operatorname{Thom}}
\newcommand{\Spec}{\operatorname{Spec}}
\newcommand{\Ext}{\operatorname{Ext}}
\newcommand{\Cotor}{\operatorname{Cotor}}
\newcommand{\Tor}{\operatorname{Tor}}
\newcommand{\vol}{\operatorname{vol}}

\newcommand{\Set}{\operatorname{Set}}
\newcommand{\Top}{\operatorname{Top}}
\newcommand{\Fin}{\operatorname{Fin}}
\newcommand{\Spaces}{\operatorname{Spaces}}
\newcommand{\Sp}{\operatorname{Sp}}
\newcommand{\Spectra}{\operatorname{Spectra}}
\newcommand{\Spt}{\operatorname{Spt}}
\newcommand{\Comod}{\operatorname{Comod}}
\newcommand{\Spf}{\operatorname{Spf}}
\newcommand{\tmf}{\mathit{tmf}}
\newcommand{\Tmf}{\mathit{Tmf}}
\newcommand{\TMF}{\mathit{TMF}}
\newcommand{\Null}{\operatorname{Null}}
\newcommand{\Fil}{\operatorname{Fil}}
\newcommand{\Sq}{\operatorname{Sq}}
\newcommand{\Stable}{\operatorname{Stable}}
\newcommand{\Poly}{\operatorname{Poly}}
\newcommand{\Cat}{\operatorname{Cat}}
\newcommand{\Orb}{\operatorname{Orb}}
\newcommand{\Exc}{\operatorname{Exc}}
\newcommand{\Part}{\operatorname{Part}}
\newcommand{\Comm}{\operatorname{Comm}}
\newcommand{\Res}{\operatorname{Res}}
\newcommand{\Thick}{\operatorname{Thick}}
\newcommand{\red}{{\operatorname{red}}}

\newcommand{\Sm}{\operatorname{Sm}}
\newcommand{\Var}{\operatorname{Var}}
\newcommand{\Frob}{\operatorname{Frob}}
\newcommand{\Rep}{\operatorname{Rep}}
\newcommand{\Ch}{\operatorname{Ch}}
\newcommand{\Shv}{\operatorname{Shv}}
\newcommand{\Corr}{\operatorname{Corr}}
\newcommand{\Span}{\operatorname{Span}}
\newcommand{\Sch}{\operatorname{Sch}}
\newcommand{\ev}{\operatorname{ev}}
\newcommand{\Homog}{\operatorname{Homog}}
\newcommand{\conn}{\operatorname{conn}}
\newcommand{\type}{\operatorname{type}}
\newcommand{\num}{\operatorname{num}}
\newcommand{\Aff}{\operatorname{Aff}}
\newcommand{\Psh}{\operatorname{Psh}}
\newcommand{\sk}{\operatorname{sk}}
\newcommand{\cosk}{\operatorname{cosk}}
\newcommand{\Cart}{\operatorname{Cart}}

\newcommand{\Br}{\operatorname{Br}}
\newcommand{\BW}{\operatorname{BW}}
\newcommand{\Cl}{\operatorname{Cl}}
\newcommand{\Conf}{\operatorname{Conf}}
\newcommand{\Alg}{\operatorname{Alg}}
\newcommand{\CAlg}{\operatorname{CAlg}}
\newcommand{\Lie}{\operatorname{Lie}}
\newcommand{\Coalg}{\operatorname{Coalg}}
\newcommand{\Ab}{\operatorname{Ab}}
\newcommand{\Ind}{\operatorname{Ind}}
\newcommand{\ind}{\operatorname{ind}}
\newcommand{\Fix}{\operatorname{Fix}}
\newcommand{\ho}{\operatorname{ho}}
\newcommand{\coeq}{\operatorname{coeq}}
\newcommand{\CMon}{\operatorname{CMon}}
\newcommand{\Sing}{\operatorname{Sing}}
\newcommand{\Inj}{\operatorname{Inj}}
\newcommand{\StMod}{\operatorname{StMod}}
\newcommand{\Loc}{\operatorname{Loc}}
\newcommand{\Free}{\operatorname{Free}}
\newcommand{\Art}{\operatorname{Art}}
\newcommand{\Gpd}{\operatorname{Gpd}}
\newcommand{\Def}{\operatorname{Def}}
\newcommand{\Hyp}{\operatorname{Hyp}}
\newcommand{\Pre}{\operatorname{Pre}}
\newcommand{\Lat}{\operatorname{Lat}}
\newcommand{\Coords}{\operatorname{Coords}}
\newcommand{\cone}{\operatorname{cone}}
\newcommand{\Spc}{\operatorname{Spc}}
\newcommand{\QCoh}{\operatorname{QCoh}}
\newcommand{\height}{\operatorname{ht}}
\newcommand{\Sub}{\operatorname{Sub}}
\newcommand{\Cone}{\operatorname{Cone}}
\newcommand{\Cocone}{\operatorname{Cocone}}
\newcommand{\Ran}{\operatorname{Ran}}
\newcommand{\Lan}{\operatorname{Lan}}
\newcommand{\LieAlg}{\operatorname{LieAlg}}
\newcommand{\Com}{\operatorname{Com}}
\newcommand{\CoAlg}{\operatorname{CoAlg}}
\newcommand{\Prim}{\operatorname{Prim}}
\newcommand{\Coh}{\operatorname{Coh}}
\newcommand{\FormalGrp}{\operatorname{FormalGrp}}
\newcommand{\Fact}{\operatorname{Fact}}
\renewcommand{\Bar}{\operatorname{Bar}}
\newcommand{\Cobar}{\operatorname{Cobar}}
\newcommand{\Ad}{\operatorname{Ad}}
\newcommand{\Moduli}{\operatorname{Moduli}}
\newcommand{\dgla}{\operatorname{dgla}}
\newcommand{\obl}{\operatorname{obl}}
\newcommand{\IndCoh}{\operatorname{IndCoh}}
\newcommand{\Cocomm}{\operatorname{Cocomm}}
\newcommand{\PreStk}{\operatorname{PreStk}}
\newcommand{\FormGrp}{\operatorname{FormGrp}}
\newcommand{\FormMod}{\operatorname{FormMod}}
\newcommand{\Grp}{\operatorname{Grp}}
\newcommand{\CommAlg}{\operatorname{CommAlg}}
\newcommand{\CoComm}{\operatorname{CoComm}}
\newcommand{\IndSch}{\operatorname{IndSch}}
\newcommand{\Dist}{\operatorname{Dist}}
\newcommand{\Triv}{\operatorname{Triv}}
\newcommand{\Oper}{\operatorname{Oper}}
\newcommand{\Bij}{\operatorname{Bij}}
\newcommand{\Syl}{\operatorname{Syl}}
\newcommand{\Inn}{\operatorname{Inn}}
\newcommand{\Emb}{\operatorname{Emb}}
\newcommand{\Gr}{\operatorname{Gr}}
\newcommand{\CRing}{\operatorname{CRing}}
\newcommand{\sSet}{\operatorname{sSet}}
\newcommand{\et}{\text{\'et}}
\newcommand{\Sh}{\operatorname{Sh}}
\newcommand{\Nil}{\operatorname{Nil}}
\newcommand{\Cech}{\v Cech}

\newcommand{\A}{\mathbb{A}}
\replacecommand{\C}{\mathbb{C}}
\newcommand{\CP}{\mathbb{C}\mathrm{P}}
\newcommand{\E}{\mathbb{E}}
\newcommand{\F}{\mathbb{F}}
\replacecommand{\G}{\mathbb{G}}
\renewcommand{\H}{\mathbb{H}}
\newcommand{\K}{\mathbb{K}}
\newcommand{\M}{\mathbb{M}}
\newcommand{\N}{\mathbb{N}}

\newcommand{\bbO}{\mathcal{O}}

\renewcommand{\P}{\mathbb{P}}
\newcommand{\Q}{\mathbb{Q}}
\newcommand{\R}{\mathbb{R}}
\newcommand{\RP}{\mathbb{R}\mathrm{P}}
\newcommand{\V}{\vee}
\newcommand{\T}{\mathbb{T}}
\providecommand{\U}{\mathscr{U}}
\newcommand{\Z}{\mathbb{Z}}
\renewcommand{\k}{\Bbbk}
\newcommand{\g}{\mathfrak{g}}
\newcommand{\m}{\mathfrak{m}}
\newcommand{\n}{\mathfrak{n}}
\newcommand{\p}{\mathfrak{p}}
\newcommand{\q}{\mathfrak{q}}
\renewcommand{\t}{\mathfrak{t}}

\newcommand{\pa}[1]{\left( {#1} \right)}
\newcommand{\br}[1]{\left[ {#1} \right]}
\newcommand{\cu}[1]{\left\{ {#1} \right\}}
\newcommand{\ab}[1]{\left| {#1} \right|}
\newcommand{\an}[1]{\left\langle {#1}\right\rangle}
\newcommand{\fl}[1]{\left\lfloor {#1}\right\rfloor}
\newcommand{\ceil}[1]{\left\lceil {#1}\right\rceil}
\newcommand{\tf}[1]{{\textstyle{#1}}}
\newcommand{\patf}[1]{\pa{\textstyle{#1}}}

\renewcommand{\mp}{\ \raisebox{5pt}{\text{\rotatebox{180}{$\pm$}}}\ }
\renewcommand{\d}[1]{\ss \mathrm{d}#1}
\newcommand{\imod}{\hspace{-7pt}\pmod}

\renewcommand{\epsilon}{\varepsilon}
\renewcommand{\phi}{{\mathchoice{\raisebox{2pt}{\ensuremath\varphi}}{\raisebox{2pt}{\!\! \ensuremath\varphi}}{\raisebox{1pt}{\scriptsize$\varphi$}}{\varphi}}}
\newcommand{\ph}{{\color{white}.\!}}
\newcommand{\tspacer}{{\ensuremath{\color{white}\Big|\!}}}
\newcommand{\chii}{\raisebox{2pt}{\ensuremath\chi}}

\let\originalchi=\chi
\renewcommand{\chi}{{\!{\mathchoice{\raisebox{2pt}{
$\originalchi$}}{\!\raisebox{2pt}{
$\originalchi$}}{\raisebox{1pt}{\scriptsize$\originalchi$}}{\originalchi}}}}

\let\originalforall=\forall
\renewcommand{\forall}{\ \originalforall}

\let\originalexists=\exists
\renewcommand{\exists}{\ \originalexists}

\let\realcheck\check
\newcommand{\vH}{\realcheck{H}}

\newenvironment{qu}[2]
{\begin{list}{}
	  {\setlength\leftmargin{#1}
	  \setlength\rightmargin{#2}}
	  \item[]\footnotesize}
		  {\end{list}}

\newenvironment{titleblock}
{\begin{mdframed}[linecolor=black!20,backgroundcolor=black!15]\begin{center}}
{\end{center}\end{mdframed}}

\newenvironment{shadedblock}[1][5in]
{\bigskip\begin{mdframed}[align=center,userdefinedwidth=#1,linecolor=white,backgroundcolor=black!5]}{\end{mdframed}}

\newenvironment{shadedtitleblock}[2][5in]
{\begin{mdframed}[align=center,userdefinedwidth=#1,linecolor=white,backgroundcolor=black!15]\sc #2\end{mdframed}\begin{mdframed}[align=center,userdefinedwidth=#1,linecolor=white,backgroundcolor=black!5]}{\end{mdframed}}

\newcommand{\shadedheader}[1]{\vspace{15pt}\begin{mdframed}[linecolor=black!20,backgroundcolor=black!5]\sc #1\end{mdframed}\vspace{15pt}}


\newcommand{\itext}{\shortintertext} 
\makeatletter
\@ifundefined{resetu}{ 
	\renewcommand{\u}{\underbracket[0.7pt]} 
}
\makeatother

\makeatletter
\@ifundefined{mathds}{
	\newcommand{\Id}{Id}
	}{
	\newcommand{\Id}{\mathds{1}} 
	}
\@ifundefined{sethlcolor}{
	\newcommand{\fixmehl}[2]{\underline{#1}\marginpar{\raggedright\smaller\smaller #2}}
	}{\@ifundefined{marginnote}{
		\newcommand{\highlight}[1]{\ifmmode{\text{\sethlcolor{llgray}\hl{$#1$}}}\else{\sethlcolor{llgray}\hl{#1}}\fi}
		\newcommand{\fixmehl}[2]{\highlight{#1}\marginpar{\raggedright\smaller\smaller #2}}
		}{
		\newcommand{\highlight}[1]{\ifmmode{\text{\sethlcolor{llgray}\hl{$#1$}}}\else{\sethlcolor{llgray}\hl{#1}}\fi}
		\newcommand{\fixmehl}[2]{\marginnote{\smaller \smaller\color{Maroon} #2}{\highlight{#1}}}
		}
	}
\@ifundefined{color}{}{
	\definecolor{darkgreen}{RGB}{0,70,0}
	\definecolor{dgreen}{RGB}{0,100,0}
	\definecolor{purple}{RGB}{120,00,120}
	\definecolor{gray}{RGB}{100,100,100}
	\definecolor{mgreen}{RGB}{0,150,0}
	\definecolor{dgreen}{RGB}{0,100,0}
	\definecolor{llgray}{RGB}{230,230,230}
	\definecolor{lgreen}{RGB}{100,200,100}
	\definecolor{mgray}{RGB}{150,150,150}
	\definecolor{lgray}{RGB}{190,190,190}
	\definecolor{maroon}{RGB}{150,0,0}
	\definecolor{lblue}{RGB}{120,170,200}
	\definecolor{mblue}{RGB}{65,105,225}
	\definecolor{dblue}{RGB}{0,56,111}
	\definecolor{orange}{RGB}{255,165,0}
	\definecolor{brown}{RGB}{177,84,15}
	\definecolor{rose}{RGB}{135,0,52}
	\definecolor{gold}{RGB}{177,146,87}
	\definecolor{dred}{RGB}{135,19,19}
	\definecolor{mred}{RGB}{194,28,28}
	\newcommand{\edit}[1]{{\it{\color{gray}#1}}}
	\newcommand{\fixme}[1]{{\color{maroon}\it{#1}}}
	\newcommand{\citeme}[2][\!\!]{{\color{orange}[#2~\textit{#1}]}}
	\newcommand{\later}[1]{{\color{dgreen}#1}}
	\newcommand{\corr}[1]{{\color{red}\itshape #1}}
	\newcommand{\question}[1]{\itshape{\color{blue}#1}\upshape}
}
\@ifundefined{substack}{}{
    \newcommand{\attop}[1]{{\let\textstyle\scriptstyle\let\scriptstyle\scriptscriptstyle\substack{#1}}}
    \renewcommand{\atop}[1]{{\let\scriptstyle\textstyle\let\scriptscriptstyle\scriptstyle\substack{#1}}}
}
\makeatother

\newcommand{\tabentry}[1]{\renewcommand{\arraystretch}{1}\begin{tabular}{c}#1\end{tabular}}

\newcommand{\margin}[1]{\marginpar{\raggedright \scalefont{0.7}#1}} 

\newcommand{\pullback}{\ar@{}[rd]|<<{\text{\pigpenfont A}}}
\newcommand{\pushout}{\ar@{}[rd]|>>{\text{\pigpenfont I}}}

\newcommand{\longleftrightarrows}{\xymatrix@1@C=16pt{
\ar@<0.4ex>[r] & \ar@<0.4ex>[l]
}}
\newcommand{\longrightrightarrows}{\xymatrix@1@C=16pt{
\ar@<0.4ex>[r]\ar@<-0.4ex>[r] & 
}}
\newcommand{\mapstto}{\,\xymatrix@1@C=16pt{
\ar@{|->}[r] & 
}\,}
\newcommand{\mapsttoo}[1]{\xymatrix@1@C=16pt{
\ar@{|->}[r]^-{#1} & 
}}
\newcommand{\rightrightrightarrows}{\xymatrix@1@C=16pt{
\ar[r]\ar@<0.8ex>[r]\ar@<-0.8ex>[r] & 
}}
\newcommand{\longleftleftarrows}{\xymatrix@C=16pt{
 & \ar@<0.4ex>[l]\ar@<-0.4ex>[l]
}}
\newcommand{\leftleftleftarrows}{\xymatrix@1@C=16pt{
 & \ar[l]\ar@<0.8ex>[l]\ar@<-0.8ex>[l]
}}
\newcommand{\leftleftleftleftarrows}{\xymatrix@1@C=16pt{
 & \ar@<0.8ex>[l]\ar@<0.3ex>[l]\ar@<-0.3ex>[l]\ar@<-0.8ex>[l]
}}
\newcommand{\lcircle}{\ar@(ul,dl)} 
\newcommand{\rcircle}{\ar@(ur,dr)}
\newcommand{\intto}{\ \xymatrix@1@C=16pt{
\ar@{^(->}[r] & 
}}


\newtheoremstyle{gloss}{\topsep}{\topsep}{}{0pt}{\bfseries}{}{\newline}{\newline
*{\bf #3} }
\theoremstyle{gloss}
\newtheorem*{defstar}{Definition}

\newtheoremstyle{newplain}{20pt}{0pt}{\it}{0pt}{\bfseries}{.}{1ex}{}
\theoremstyle{newplain}

\ifthenelse{\isundefined\theoremnumstyle}
	{\newtheorem{theorem}{Theorem}[section] 
	\numberwithin{equation}{section}} 
	{\ifthenelse{\equal\theoremnumstyle{}}
		{\newtheorem{theorem}{Theorem}}
		{\newtheorem{theorem}{Theorem}[section]
		\numberwithin{equation}{section}
		}
	}

\newtheorem{corollary}[theorem]{Corollary}
\newtheorem{claim}[theorem]{Claim}
\newtheorem{lemma}[theorem]{Lemma}
\newtheorem{proposition}[theorem]{Proposition}
\newtheorem{fact}[theorem]{Fact}

\newtheoremstyle{newtextthm}{20pt}{0pt}{}{0pt}{\bfseries}{.}{1ex}{}
\theoremstyle{newtextthm}
\newtheorem{definition}[theorem]{Definition}
\newtheorem{example}[theorem]{Example}
\newtheorem{problem}[theorem]{Problem}
\newtheorem{remark}[theorem]{Remark}
\newtheorem{notation}[theorem]{Notation}

\newtheorem*{theoremstar}{Theorem}
\newtheorem*{lemmastar}{Lemma}
\newtheorem*{corstar}{Corollary}
\newtheorem*{corollarystar}{Corollary}
\newtheorem*{propositionstar}{Proposition}
\newtheorem*{claimstar}{Claim}
\newtheorem*{examplestar}{Example}

\newcommand{\argforrandom}{}
\theoremstyle{newtextthm}
\newtheorem{helperforrandom}[theorem]{\argforrandom}
\newtheorem*{helperforrandomstar}{\argforrandom}
\newenvironment{random}[1]{\renewcommand{\argforrandom}{#1}\begin{helperforrandom}}{\end{helperforrandom}}
\newenvironment{randomstar}[1]{\renewcommand{\argforrandom}{#1}\begin{helperforrandomstar}}{\end{helperforrandomstar}}

\newenvironment{exercise}[1]{\hspace{1pt}\nn \large {\sc #1.}\hv \normalsize
\vspace{10pt}\\ }{} 
\newcommand{\subthing}[1]{\hv\large(#1)\hv\hv \normalsize }

\newcommand{\itemref}[1]{(\ref{#1})}

\renewcommand{\showlabelfont}{\tiny\tt\color{mgreen}}
\newcommand{\val}{\operatorname{val}}
\newcommand{\jchartpath}{charts/charts_j/output/slicecharts_consolidated}
\newcommand{\jmockupchartpath}{charts/charts_j/d1-mockup.pdf}

\newcommand{\h}{\mathsf{h}}

\title{$\R$-motivic $v_1$-periodic homotopy}

\author{Eva Belmont}
\address{Department of Mathematics,
University of California San Diego,
La Jolla, CA 92093, USA}
\email{ebelmont@ucsd.edu}

\author{Daniel C. Isaksen}
\address{
Department of Mathematics, 
Wayne State University, 
Detroit, MI 48202, USA}
\email{isaksen@wayne.edu}

\author{Hana Jia Kong}
\address{School of Mathematics,
Institute for Advanced Study,
Princeton, NJ 08540, USA}
\email{hana.jia.kong@gmail.com}

\thanks{The second author was supported by NSF grant
DMS-1904241. The third author was supported by the National Science Foundation under Grant No. DMS-1926686.
This manuscript answers a question posed by Mark Behrens
to the second author in 2010 at the 
Conference on Homotopy Theory and Derived Algebraic Geometry, Fields
Institute, Toronto, Ontario, Canada.
The authors thank William Balderrama, Robert Bruner, and John Rognes for helpful discussions.
}

\subjclass[2010]{Primary 55Q10; Secondary 14F42, 55Q51, 55T99}

\keywords{motivic stable homotopy group, $v_1$-periodicity,
effective spectral sequence}

\begin{abstract}
We compute the $v_1$-periodic
$\R$-motivic stable homotopy groups.
The main tool is the effective slice spectral sequence.
Along the way, we also analyze $\C$-motivic and $\eta$-periodic
$v_1$-periodic homotopy from the same perspective.
\end{abstract}

\maketitle

\section{Introduction}
The computation of the 
stable homotopy groups of spheres is a difficult but central
problem of stable homotopy theory.  There is much that we do not know
about stable homotopy.  However, the $v_1$-periodic stable homotopy groups
(also known for historical reasons as the image of $j$) are 
completely
understood, and they have interesting number-theoretic properties.

The goal of this article is to explore $v_1$-periodic stable
homotopy in the $\R$-motivic context.  This choice of ground field
represents a middle ground between the well-understood $\C$-motivic
situation and the much more difficult situation of an arbitrary
field, in which arithmetic necessarily enters into the picture.

From our perspective, the field $\R$ introduces just one piece of
arithmetic: the failure of $-1$ to have a square root.  This leads to
complications in $\R$-motivic homotopical computations, but they
can be managed with care and attention to detail.

Classically, $v_1$-periodic homotopy is detected by the
connective spectrum $j^\top$, which is defined to be the fiber of a map
$\ko^\top \ttoo{\psi^3-1} \Sigma^4 \ksp^\top$, where 
$\ko^\top$ is the connective real $K$-theory spectrum, 
$\ksp^\top$ is the connective symplectic $K$-theory spectrum, and
$\psi^3$ is an Adams operation. (The ``$\top$" superscripts
indicate that we are discussing the classical context here,
rather than the motivic context.)

In fact, $\ko^\top$ itself is the more natural target for the map
$\psi^3 -1$.  However, the fiber of
$\ko^\top \ttoo{\psi^3-1} \ko^\top$ has a minor defect.
It has some additional homotopy classes in stems $-1$, $0$, and $1$
that do not correspond to homotopy classes for the sphere spectrum.
In other words, the map from $S^0$ to this fiber is not surjective
in homotopy.
If we change the target of $\psi^3-1$ from $\ko^\top$ to its connective
cover $\Sigma^4 \ksp^\top$, then this problem disappears, and the map
from $S^0$ to the fiber is onto in homotopy.

It is possible to mimic these constructions in motivic
stable homotopy theory \cite{BH20}.  
At the prime $2$,
one can define the motivic
connective spectrum $j$ to be the fiber of
a map $\ko \ttoo{\psi^3-1} \Sigma^{4,2} \ksp$, where 
$\ko$ is the very effective connective Hermitian $K$-theory spectrum,
$\ksp$ is defined in terms of very effective covers of $\ko$,
and $\psi^3$ is a motivic lift of an Adams operation.

However, from a computational perspective, this definition of $j$
introduces apparently unnecessary complications.
It is possible to compute the homotopy of $\R$-motivic $j$ using
the techniques that appear later in this manuscript.  
However, the computation is slightly messy, involving some exceptional
differentials and exceptional hidden extensions in low dimensions.  
In any case,
the homotopy of the $\R$-motivic sphere does not
surject onto the homotopy of $\R$-motivic $j$.  In other words,
the main rationale for using $\ksp$ in the first place does not apply
in the motivic situation.

On the other hand, the computation of the homotopy of
the $\R$-motivic fiber of $\ko \ttoo{\psi^3-1} \ko$ is much
cleaner.  Moreover, it tells us just as much about $v_1$-periodic
$\R$-motivic homotopy as $j$.  In other words, it has all of the
computational advantages of $j$, while avoiding some unfortunate complications.

Consequently, in this manuscript, we will be solely concerned with
the fiber of $\ko \ttoo{\psi^3-1} \ko$.  We use the notation $L$
for this fiber in order to avoid confusion with the traditional
meaning of $j$.
The symbol $L$ is meant to draw a connection to the classical
$K(1)$-local sphere $L_{K(1)} S^0$, which is the fiber of
$\KO^{\top} \ttoo{\psi^3-1} \KO^{\top}$.

However, the homotopy of the $\R$-motivic spheres does
not surject onto the homotopy of $\R$-motivic $L$.
It is possible that we may have not yet constructed the
``correct" motivic version of the classical connective
spectrum $j^\top$.
These considerations raise questions about vector bundles
and the motivic Adams conjecture.  We make no attempt to study these
more geometric issues.

We claim to compute the $v_1$-periodic $\R$-motivic
stable homotopy groups, but this claim deserves some clarification.
We do not use an intrinsic definition of $v_1$-periodic
$\R$-motivic homotopy, although such a definition could probably
be formulated in terms of the motivic $K(1)$-local sphere.
However, the theory of motivic $K(1)$-localization
has not yet been suitably developed.

Rather, we merely compute the homotopy of $L$, and we observe that
it detects large-scale structure in the stable homotopy
of the $\R$-motivic sphere, which was described in a range in
\cite{R-paper}.
In other words, we have a practical description of
$\R$-motivic $v_1$-periodic homotopy, not a theoretical one.

The careful reader may object that our approach with effective
spectral sequences is long-winded and unnecessarily complicated.
In fact, the homotopy of $L$ could be determined by direct analysis
of the long exact sequence associated to the defining fiber sequence
for $L$.  However, there is a disadvantage in this direct approach. 
We find that the effective filtration is useful additional information
about the homotopy of $L$ that helps us understand the computation.
The effective filtration is part of the ``higher structure" of the
homotopy of $L$.  
While we have no immediate uses for this higher structure,
we know from experience that it inevitably becomes important
in deeper homotopical analyses.

\subsection{Charts}

We provide a series of charts that display the effective spectral
sequences for $\ko$ and $L$, as well as their $\C$-motivic counterparts.
We consider these charts
to be the central achievement of this manuscript.  We encourage 
the reader to rely heavily on the charts.  In a sense, they provide
an illustrated guide to our computations.

Caution must be
exercised in the comparison to \cite{R-paper} since the Adams filtrations
and effective filtrations are different. 
As in \cite{R-paper}, our charts consider each coweight separately;
we have found that this is a practical way of studying
$\R$-motivic homotopy groups.
Periodicity by $\tau^4$ 
(which is not a permanent cycle, 
but should be thought of as a periodicity operator in coweight 4)
allows us to give a fairly compact depiction of the homotopy of $L$
in coweights congruent to 0, 1, and 2 modulo 4; see
Figures \ref{fig:L-Einfty:0}, \ref{fig:L-Einfty:1},
and \ref{fig:L-Einfty:2}.

The homotopy of $L$ in coweights congruent to 3 modulo 4 is much
more interesting but harder to describe.  
See Figures \ref{fig:L-Einfty:3mod8} and \ref{fig:L-Einfty:7mod16}.

\subsection{Completions}

We are computing exclusively in the $2$-complete context.  
This simplifies all
questions surrounding convergence of spectral sequences.  
Also, the final computational $2$-complete answers are easier to state 
than their $2$-localized or integral counterparts.

We generally omit completions from our notation for brevity.
For example, we write $\Z$ for the $2$-adic integers, and we write
$\KO$ for the $2$-completed $\R$-motivic Hermitian
$K$-theory spectrum.

Section \ref{subsctn:convergence} discusses these topics in
slightly more detail.

\subsection{Regarding the element $2$}
When passing from the effective $E_\infty$-page to stable homotopy groups,
one must choose homotopy elements that are represented by
each element of the $E_\infty$-page.  For the element $2$ in 
the $E_\infty$-page, there is more than one choice in
$\pi_{0,0}$ because of the presence of elements in the $E_\infty$-page 
in higher effective filtration.

From the perspective of abelian groups, the element $2 = 1 + 1$ is the obvious choice of homotopy element.
However, there is another element $\h$, also detected by $2$
in the effective spectral sequence, that turns out to be a much
more convenient choice.  The difference between $\h$ and $2$
in homotopy is detected by the element $\rho h_1$ in higher
filtration (to be discussed later).
Experience has shown that 
the motivic stable homotopy groups are
easier to describe in terms of $\h$ than in terms of $2$.  
{For example,
we have the relations $\h \rho = 0$ and $\h \eta = 0$,
where $\rho$ and $\eta$ are the homotopy elements detected by
$\rho$ and $h_1$ respectively.  However, 
neither $2 \rho$ nor $2 \eta$ are zero.}

There are two additional reasons why the element $\h$
plays a central role.  First, it corresponds to the hyperbolic
plane under the isomorphism between
motivic $\pi_{0,0}$ and the Grothendieck--Witt group of symmetric bilinear
forms \cite{Morel-pi0}.  Second, it plays the role of the zeroth 
Hopf map, in the sense that the Steenrod operations on its cofiber
are simpler than the Steenrod operations for the cofiber of $2$.

Consequently, instead of describing motivic stable homotopy groups
as a module over the $2$-adic integers $\Z_2$ (i.e., in terms of
the action of $2$),
it is easier to describe the homotopy groups
in terms of the action of $\h$.

\subsection{Future directions}

Our work points toward several open problems.

\begin{problem}
Compute motivic $v_1$-periodic homotopy over an arbitrary base field.
See Section \ref{subsctn:base-fields} for further discussion.
\end{problem}

\begin{problem}
Recompute the homotopy of $L$ using the $\R$-motivic Adams spectral
sequence.  This would be a useful comparison object for further
computations with the Adams spectral sequence for the $\R$-motivic sphere.
The classical Adams spectral sequence for $j^{\top}$
was studied by Davis \cite{Davis-J}, but it was only
recently computed completely by Bruner and Rognes \cite{bruner-rognes-j}.
We are proposing a motivic analogue of their results.
\end{problem}

\begin{problem}
Carry out the effective spectral sequence for the $\R$-motivic sphere
in a range.  These computations would serve as a useful companion
to $\R$-motivic Adams spectral sequence computations \cite{R-paper}.
The idea is to build on the techniques that are developed
in this manuscript.
\end{problem}

\begin{problem}
Compute the $v_1$-periodic $C_2$-equivariant stable homotopy groups.
More precisely, carry out the $C_2$-effective spectral sequence for a 
$C_2$-equivariant version of $L$.  The details will be similar to
but more
complicated than the computations in this manuscript.  See \cite{Kong20} for the
effective approach to the $C_2$-equivariant version of $\ko$.
Alternatively, one might compute the $v_1$-periodic
$C_2$-equivariant stable homotopy groups by periodicizing the
$v_1$-periodic $\R$-motivic groups with respect to $\tau$,
as considered by Behrens and Shah \cite{BS20}.

Recall that the $\R$-motivic and $C_2$-equivariant stable
homotopy groups are isomorphic in a range \cite{BGI21}.
Consequently, we anticipate that some version of the structure
described in this manuscript appears in the $C_2$-equivariant
context as well.

In the equivariant context, we 
mention
Balderrama's \cite{balderrama} computation
of the homotopy groups of the Borel $C_2$-equivariant
$K(1)$-local sphere, using techniques that are entirely different from ours.
Roughly speaking, Balderrama computes 
the $\tau^4 v_1^4$-periodicization of our result.
\end{problem}

\begin{problem}
Study $K(1)$-localization in the motivic context, which ought
to be something like localization with respect to $KGL/2$.
Compute $K(1)$-local motivic homotopy.
This would provide an intrinsic definition of $v_1$-periodic
homotopy that would improve upon the practical computational
perspective of this manuscript.

A guide to the motivic situation could lie in the work of
Balderrama \cite{balderrama} and Carrick \cite{Carrick}
on equivariant localizations.
\end{problem}

\subsection{Towards $v_1$-periodic homotopy over general base fields}
\label{subsctn:base-fields}

Our explicit computations point the way towards a complete computation
of the $v_1$-periodic motivic stable homotopy groups over arbitrary
fields.
The situation here is analogous to the $\eta$-periodic
$\R$-motivic computations of \cite{GI16}, which foreshadowed the
more general $\eta$-periodic computations of 
\cite{Wilson18}, \cite{OR20}, and \cite{BH20}.

\begin{problem}
\label{prob:j-general}
Let $k$ be an arbitrary field of characteristic different from $2$.
Let $GW(k)$ be the Grothendieck--Witt ring of 
symmetric bilinear forms over $k$.
Describe the 2-primary homotopy groups of the $k$-motivic spectrum $L$ in
terms of the cokernels and kernels of multiplication by
various powers of $2$ and of $\h$ on $GW(k)$.
\end{problem}

Problem \ref{prob:j-general} is stated only in terms of 2-primary
computations because that is the most interesting part.  
We expect that the generalization to odd primes is straightforward.

The exact powers of $2$ and $\h$ 
that are required in Problem \ref{prob:j-general}
depend not only on the coweight but also on the stem. 
Figures \ref{fig:L-Einfty:3mod8} and \ref{fig:L-Einfty:7mod16}
show that
$2^{v(j)+3}$ is the relevant power of $2$ in
most stems in coweight $4j-1$.
Here $v(j)$ is the $2$-adic valuation of $j$, i.e., 
largest number $v$ such that $2^v$ divides $j$.
In coweight $4j-1$ and stem $4i -1$, 
we see larger powers of $2$, as well as powers of $\h$.

Similar observations apply to the kernels that contribute to coweight
$4i$.

\subsection{Outline}

Section \ref{sctn:background} contains some background information
that we will need to get started on our computations.
We briefly discuss convergence of the effective spectral sequences
that we will use.  We recall some results of
Bachmann--Hopkins \cite{BH20} about motivic Adams operations
and of 
Ananyevskiy--R{\"{o}}ndigs--{\O}stv{\ae}r \cite{ARO2020}
about the slices of $\ko$.

The final subsection of Section \ref{sctn:background}  
makes a precise connection between
$\R$-motivic computations and $\C$-motivic computations.
Namely, taking the cofiber of the map $\rho$ changes $\R$-motivic
computations into the analogous $\C$-motivic computations.
This idea originated in \cite{R-paper}.  This  simple
observation has surprisingly powerful consequences,
especially for the analysis of hidden extensions.
It allows us to leverage well-understood $\C$-motivic
information into $\R$-motivic information.

In Section \ref{sctn:background}, we have taken some care to eliminate
details that we do not use.  
In other words, Section \ref{sctn:background} describes the minimal
hypotheses necessary in order to carry out our computations.

Section \ref{sctn:C} considers $\C$-motivic computations, which
play two roles in our work.
First, they serve as a warmup to the
more intricate $\R$-motivic computations.  Second, the comparison
between $\R$-motivic and $\C$-motivic homotopy is a necessary
ingredient for our computations. 
In this section,
we describe the effective spectral sequence for $\ko^\C$.  This material
is well-known, since it is the same (up to regrading) as the
$\C$-motivic Adams--Novikov spectral sequence for $\ko^\C$,
which is nearly the same as the classical Adams--Novikov spectral
sequence for $\ko^\top$.
We then use the fiber sequence
\[
L^\C \tto \ko^\C \ttoo{\psi^3-1} \ko^\C
\]
in order to determine the $E_1$-page of the effective spectral sequence
for $L^\C$.  

We next completely analyze the effective spectral
sequence for the $\eta$-per\-i\-od\-ic\-i\-za\-tion $L^\C[\eta^{-1}]$.
The $\eta$-periodic spectral sequence is significantly simpler than the
unperiodicized spectral sequence.
We note the close similarity between the homotopy of
$L^\C[\eta^{-1}]$ and the computations of Andrew--Miller \cite{AM2017}.

The $\eta$-periodic effective differentials
completely determine the unperiodicized effective differentials for
$L^\C$.  Finally, we determine hidden extensions in the
effective $E_\infty$-page for $L^\C$.

Section \ref{sctn:C} completely computes the homotopy of $L^\C$, but
the effective spectral sequence is not necessarily the simplest
way of obtaining the computation.  Nevertheless, we have chosen this
approach because of its relationship to our later $\R$-motivic 
computations.

Section \ref{sctn:ko-eff} analyzes the effective
spectral sequence for $\R$-motivic $\ko$, including all differentials
and hidden extensions.  The $E_1$-page is readily determined from
the work of
Ananyevskiy--R{\"{o}}ndigs--{\O}stv{\ae}r \cite{ARO2020}
on the slices of $\ko$.
We draw particular attention to the formula
\begin{equation}
\label{eq:ARO}
(\tau h_1)^2 = \tau^2 \cdot h_1^2 + \rho^2 \cdot v_1^2.
\end{equation}
This formula has a major impact on the shape of the answers that
we obtain.  In a sense, our work merely draws algebraic conclusions
from Equation (\ref{eq:ARO}) and $\eta$-periodic information.
The hidden extensions in the effective $E_\infty$-page for $\ko$
are easily determined by comparison to the $\C$-motivic case,
using the relationship between $\C$-motivic and $\R$-motivic homotopy
that is described in Section \ref{subsctn:R-C-compare}.

Our computation of the homotopy of $\R$-motivic $\ko$ is not original.
See \cite{Kong20} for a $C_2$-equivariant analogue of the 
effective spectral sequence for $\ko$. 
The $\R$-motivic computation can be extracted from the $C_2$-equivariant computation by dropping the ``negative cone'' elements. 
Also, 
Hill \cite{Hill-A(1)} computed the Adams spectral sequence for $\ko$,
although the $\R$-motivic spectrum $\ko$ had not yet been constructed at the time.

The next step, undertaken in Section \ref{subsctn:psi3},
is to analyze the effect of $\psi^3$ on the effective spectral
sequence of $\ko$.  This follows from a straightforward comparison
to the classical case, together with careful bookkeeping.
In turn, this leads to a complete understanding of the 
effective $E_1$-page of $L$, which is described in
Section \ref{subsec:slice-L}.  Again, this is mostly a matter of careful
bookkeeping.

Section \ref{subsctn:eta-inverted} completely analyzes the effective
spectral sequence for $\eta$-periodic $L[\eta^{-1}]$.
This information is essentially already well-known, either from
\cite{GI16} or from Ormsby--R\"{o}ndigs \cite{OR20}, although
those references do not specifically mention $L$.

As in the $\C$-motivic situation of Section \ref{sctn:C},
$\eta$-periodic information yields everything that we need to know 
about the unperiodic situation, including all multiplicative
relations in the effective $E_1$-page for $L$ (see Section \ref{subsec:mult-relation}) and
all differentials (see Sections \ref{subsec:d1-diff} and \ref{subsec:higher-diff}).
We again emphasize the significance of Equation (\ref{eq:ARO}) in carrying
out the details.
Finally, Section \ref{subsec:extensions} studies hidden
extensions in the effective $E_\infty$-page for $L$.
As for $\ko$, these hidden extensions follow by comparison
to the $\C$-motivic case.

\subsection{Notation} \label{subsec:notation}

We use the following notation conventions.
\begin{itemize} 
\item
$v(n)$ is the $2$-adic valuation of $n$, i.e., the largest integer
$v$ such that $2^v$ divides $n$.
\item
{Except in Section \ref{sctn:background},}
everything is implicitly $2$-completed.  For example,
$S$ is actually the $2$-complete $\R$-motivic sphere spectrum.
Similarly, $\Z$ is the $2$-adic integers.
\item
$s_*(X)$ are the slices of a motivic spectrum $X$.
\item 
$E_r(X)$ is the $E_r$-page of the effective spectral sequence for a motivic spectrum $X$.
\item
We find the effective slice filtration to be slightly inconvenient for
our purposes.  We prefer to use the ``Adams--Novikov filtration",
which equals twice the effective filtration minus the stem.
\item
Coweight equals the stem minus the motivic weight.
\item 
Elements in $E_r(X)$ are tri-graded. We write $E^{s,f,w}_r(X)$ to denote the part with topological dimension $s$, Adams--Novikov filtration $f$, and motivic weight $w$.
\item
We use unadorned symbols for $\R$-motivic spectra.
For example, $\ko$ is the very effective
cover of the $\R$-motivic Hermitian $K$-theory spectrum.
\item
$X^\C$ is the $\C$-motivic extension-of-scalars spectrum of an
$\R$-motivic spectrum $X$.
\item 
$X^{\top}$ is the Betti realization of an $\R$-motivic spectrum $X$.
\item
$S$ is the $\R$-motivic sphere spectrum.
\item 
$\KO$ is the $\R$-motivic spectrum that represents Hermitian $K$-theory (also known as $\KQ$).
\item
$\ko$ is the very effective connective cover of $\KO$.
\item
$HA$ is the $\R$-motivic Eilenberg--Mac Lane spectrum on the
group $A$.
\item
$\psi^3$ is an Adams operation.  We use the same symbol in the
$\R$-motivic, $\C$-motivic, and classical situations.
\item
$L$ is the fiber of $\ko \ttoo{\psi^3-1} \ko$.
\item
$\Sigma^{s,w} X$ is a (bigraded) suspension of a motivic spectrum $X$.
\item
$\pi_{*,*}(X)$ are the bigraded stable homotopy groups of an
$\R$-motivic or $\C$-motivic spectrum.
\item
Recall that $\epsilon$ is the motivic homotopy class that
is represented by the twist map $S \wedge S \to S \wedge S$,
where $S$ is the motivic sphere spectrum.
Let $\h$ be the element $1-\epsilon$, which corresponds to the hyperbolic
plane under the isomorphism between $\pi_{0,0}(S)$ and the
Grothendieck-Witt ring $GW(\R)$ \cite{Morel-pi0}.
\item
The element $\rho$ belongs to the $\R$-motivic homology of a point.
It is the class represented by
$-1$ in the Milnor $K$-theory of $\R$.
Since $\rho$ survives all of the spectral sequences under consideration,
we use the same symbol for the corresponding homotopy class.
However, there is a choice of homotopy class represented by $\rho$ because
of the presence of elements in higher filtration.
There is an inconsistency in the literature about this choice.
Following \cite{Bach18a}, 
we define $\rho$ such that
$\epsilon = \rho\eta - 1$, or equivalently
$2 = \rho\eta + \h$.
\end{itemize}

We frequently use names for indecomposables that consist of more than
one symbol.  For example, Theorem \ref{thm:ko-slices} discusses the 
indecomposable element $v_1^2$ of the effective $E_1$-page for
$\ko^\C$.  These longer names are slightly more cumbersome.  This is
especially the case when we consider products.  We will use expressions
of the form $x \cdot y$ for clarity.

On the other hand, our names are particularly convenient because they
reflect the origins of the elements in terms of the spectral sequences
that we use.  For example, consider the indecomposable element
$2 v_1^2$ of the effective $E_\infty$-page for $\ko^\C$, as discussed
in Theorem \ref{thm:ko-C-Einfty} (see also Figure \ref{fig:ko-C-Einfty}).
This name reflects the element's origin in the effective $E_1$-page.
It also illuminates relations such as
\[
2v_1^2 \cdot 2v_1^2 = 4 \cdot v_1^4
\]
However, one must be careful about possible error terms in such formulas;
see especially Equation (\ref{eq:ARO}).

\section{Background}
\label{sctn:background}

In this section only, we write $\ko$ for the integral version
of the very effective cover of the Hermitian $K$-theory spectrum,
and we use the usual decorations to indicate localizations and
completions of $\ko$. In the rest of the manuscript,
$\ko$ is assumed to be $2$-completed.

\subsection{The effective slices of $\ko$}
\label{subsec:ko}

We recall the structure of the effective slices 
of $\ko$.

\begin{theorem}[{\cite[Theorem 17]{ARO2020}}] \label{thm:ko-slices}
The slices of $\ko$ are 
$$s_*(\ko) = H\Z[h_1,v_1^2]/(2h_1),$$
where $v_1^2$ and $h_1$ have degrees $(4,0,2)$ and $(1,1,1)$ 
respectively.
\end{theorem}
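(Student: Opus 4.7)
The plan is to reduce the computation to the analogous statement for connective algebraic $K$-theory, where the slices were computed by Voevodsky and Levine to be $s_*(\mathrm{kgl}) \isom H\Z[u]$ with $u$ in degree $(2,1,1)$. First, I would establish (or import) a motivic Wood-type cofiber sequence of the form
\[
\Sigma^{1,1}\ko \ttoo{\eta} \ko \tto \mathrm{kgl},
\]
which expresses $\mathrm{kgl}$ as the cofiber of $\eta$-multiplication on $\ko$. Applying the slice functors $s_n$ to this sequence then yields long exact sequences that relate the unknown slices of $\ko$ to the known slices of $\mathrm{kgl}$, with all shifts accounted for by the $(1,1)$-suspension appearing in the first term.

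Next, I would identify the two generators. Multiplication by $\eta$ on $\ko$ produces a class $h_1 \in s_1(\ko)$ in degree $(1,1,1)$, and the slice long exact sequence forces the $h_1$-summand to be $H\Z/2$ rather than $H\Z$, immediately giving the relation $2 h_1 = 0$. Meanwhile, the Bott-type generator $v_1^2 \in s_2(\ko)$ in degree $(4,0,2)$ arises as a lift of the square $u^2 \in s_2(\mathrm{kgl}) \isom \Sigma^{4,2}H\Z$; it survives to $\ko$ because Hermitian Bott periodicity has period $8$ in the stem, hence period $4$ in the effective slice direction, so the odd power $u$ is obstructed but $u^2$ is not.

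Having identified the generators $h_1$ and $v_1^2$, I would show the full slice ring equals $H\Z[h_1, v_1^2]/(2h_1)$ by inducting on slice degree, using the iterated Wood sequence to present each $s_n(\ko)$ in terms of monomials in $h_1$ and $v_1^2$ acting on the $\mathrm{kgl}$-slices. The hard part will be controlling these long exact sequences precisely enough to rule out any extra $H\Z$- or $H\Z/2$-summands in each slice, and to verify that the only multiplicative relation is $2 h_1 = 0$; this requires knowing that the lift of $u^2$ lands in the stated tri-degree rather than in some class of lower effective filtration. As an alternative route, one could instead work directly with the slice tower of the full Hermitian $K$-theory spectrum $KQ$ (whose slices are accessible via motivic cohomology with coefficients) and then pass to the very effective cover, tracking which slices of $KQ$ survive under the truncation; this is essentially the strategy of Ananyevskiy--R{\"{o}}ndigs--{\O}stv{\ae}r.
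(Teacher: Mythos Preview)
The paper does not prove this statement; it is quoted verbatim from \cite[Theorem 17]{ARO2020} and used as a black box, with only an explanation of the notation afterward. So there is no ``paper's own proof'' to compare against beyond the citation itself.

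Your sketch via the Wood cofiber sequence $\Sigma^{1,1}\ko \xrightarrow{\eta} \ko \to \mathrm{kgl}$ is a legitimate and well-known strategy, and it is indeed different from what ARO actually do (they work from the slices of periodic $\KQ$, computed earlier by R\"ondigs--{\O}stv{\ae}r, and then analyze the very effective truncation). Your route has the advantage of being more elementary once the Wood sequence is in hand, since the slices of $\mathrm{kgl}$ are classical.

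That said, one step in your outline needs more care than you indicate. You write that applying $s_n$ to the Wood cofiber sequence ``yields long exact sequences.'' Slice functors are not exact in general, so this is not automatic. What makes it work here is that $\eta$ lies in $s_1$ of the sphere, so multiplication by $\eta$ raises slice filtration by exactly one; consequently $s_n(\Sigma^{1,1}\ko) \simeq \Sigma^{1,1} s_{n-1}(\ko)$ and the induced sequence on slices really is a cofiber sequence of motivic spectra. You should state and justify this explicitly, because without it the induction does not get off the ground. Once that is in place, the extension problems you flag as ``the hard part'' are genuinely mild: in each slice degree the long exact sequence reduces to identifying maps between Eilenberg--Mac~Lane spectra, and these are determined by the known ring structure on $s_*(\mathrm{kgl})$ together with the fact that $\eta$ acts as multiplication by $h_1$.
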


We explain the expression in Theorem \ref{thm:ko-slices}. Each monomial of degree $(s,f,w)$ contributes a summand of $\Sigma^{s,w}HA$ in the $\left( \frac{s+f}{2} \right)$th slice. Here $HA$ is the motivic Eilenberg--Mac Lane spectrum associated to $A$.  The abelian group $A$ is $\F_2$ when the monomial is $2$-torsion, and is $\Z$ when the monomial is torsion free. We list the first three slices as examples:
\begin{align*}
s_0(\ko) &= H\Z\{1\},
\\s_1(\ko) & =\Sigma^{1,1}H\F_2\{h_1\},
\\s_2(\ko) & =\Sigma^{2,2}H\F_2\{h_1^2\}\vee \Sigma^{4,2}H\Z\{v_1^2\}.
\end{align*}

Beware that the multiplicative structure of $s_*(\ko)$ is not completely
captured by the notation 
in Theorem \ref{thm:ko-slices}.
The essential multiplicative relation 
is Equation (\ref{eq:ARO}), which follows immediately
from the general formulas in \cite{ARO2020}.

\begin{remark}\label{rmk:slice-ANSS}
The calculation of the slices of the motivic sphere spectrum, due to 
R{\"o}ndigs, Spitzweck, and {\O}stv{\ae}r \cite{RSO19}, is commonly expressed at the prime $2$ as
$$ s_*(S) = \MZ\tensor \Ext_{BP_*BP}^{*,*}(BP_*, BP_*). $$
Analogously, Theorem \ref{thm:ko-slices} says that
\[
\label{eq:ko-slices-ANSS} s_*(\ko) = \MZ\tensor \Ext_{BP_*BP}^{*,*}(BP_*,
BP_*(\ko^{\top})).
\]
However, we do not know of
a general theorem relating the slices of a motivic spectrum with the
Adams--Novikov $E_2$-page for its topological counterpart.
\end{remark}

\subsection{The Adams operation $\psi^3$ and the spectrum $L$}

Bachmann and Hopkins \cite{BH20} constructed a motivic analogue 
of the classical Adams operation $\psi^3$.
We summarize the results that we need.

\begin{theorem}[{\cite
{BH20}}]
\label{thm:BH-adams}
There is a unital ring map 
$\psi^3:\ko\left[\frac{1}{3}\right] \to \ko\left[\frac{1}{3}\right]$ whose Betti realization is the classical
Adams operation $\psi^3$.
\end{theorem}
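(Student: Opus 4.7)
The plan is to construct $\psi^3$ first on the full Hermitian K-theory spectrum $\KO$ and then restrict to the very effective cover $\ko$, matching the classical construction step by step so that Betti realization comes out correctly. The starting point is that $\KO$ represents Hermitian K-theory of smooth schemes, and the category of symmetric/Hermitian vector bundles carries exterior power operations $\lambda^i$ satisfying the Grothendieck $\lambda$-ring axioms on $\pi_0$. The Adams operations are then defined, as in the classical setting, by Newton's polynomial identity $\psi^n = N_n(\lambda^1,\dots,\lambda^n)$; with this formula $\psi^n$ is automatically additive and multiplicative on $\pi_0$.

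The main work is to upgrade this $\lambda$-ring structure on homotopy groups to an actual endomorphism of motivic spectra. I would follow the strategy of presenting $\KO$ as the group-completion of a symmetric monoidal groupoid of Hermitian bundles (or equivalently, a $\Gamma$-space / $E_\infty$-space model built from Hermitian Grassmannians) and define $\lambda^i$ and $\psi^3$ at that level, where ring-map coherence is visible from the symmetric monoidal structure. Inverting $3$ is needed precisely to turn $\psi^3$ into a ring map compatible with Bott periodicity: the denominators in the Newton polynomial expression and in the identity $\psi^3 \beta = 3\beta \psi^3 / 3$ for the Bott element both become invertible, so the operation extends from the connective to the periodic spectrum.

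To pass from $\KO[1/3]$ to $\ko[1/3]$, I would use the universal property of the very effective cover: by construction $\ko$ is the very effective cover of $\KO$, and the very effective cover functor is right adjoint to the inclusion of very effective spectra. Since $\ko[1/3] \to \KO[1/3] \xrightarrow{\psi^3} \KO[1/3]$ has target $\KO[1/3]$ and source is very effective, the composite factors uniquely through $\ko[1/3]$, producing the desired self-map. Unitality is inherited from the construction. For Betti realization, one checks that Betti sends Hermitian bundles over real schemes to real topological bundles and intertwines $\lambda^i$ with the classical exterior powers; the Newton-polynomial definition then forces the Betti realization of the motivic $\psi^3$ to agree with the classical one on $\KO^\top[1/3]$ and hence on its connective cover $\ko^\top[1/3]$.

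The hard part is the coherence: producing $\psi^3$ as a genuine map of (unital) ring spectra rather than just a homomorphism on homotopy groups. In Bachmann--Hopkins this is handled in a motivic $\infty$-categorical framework, leveraging a rigid model of Hermitian K-theory that makes the operations manifestly $E_\infty$ or at least $E_1$-multiplicative; verifying compatibility with the motivic Bott element and with multiplicativity after inverting $3$ is where all the subtlety lives, while everything else — descent to the very effective cover and identification of the Betti realization — is essentially formal from the universal property and the compatibility of classifying spaces.
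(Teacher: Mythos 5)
Your overall architecture matches the paper's: start from a $\psi^3$ on $\KO[\tfrac13]$, descend to $\ko[\tfrac13]$ via the very effective cover adjunction, and then identify the Betti realization. The descent step you give (the composite $\ko[\tfrac13]\to \KO[\tfrac13]\to\KO[\tfrac13]$ factors through the very effective cover, which one identifies with $\ko[\tfrac13]$) is essentially what the paper means by ``apply very effective covers''. However, two points diverge from, and are weaker than, the paper's argument.

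First, the existence of $\psi^3$ on $\KO[\tfrac13]$ as a unital (in fact $E_\infty$) ring map is simply cited from \cite[Theorem 3.1]{BH20}; the paper does not re-derive it and neither should you, since the whole proof of this theorem is bookkeeping around that citation. Your attempted sketch via exterior powers and Newton polynomials is not how Bachmann--Hopkins proceed, and it contains an error: the Newton polynomials $N_n$ have integer coefficients, so there are no ``denominators in the Newton polynomial expression''; the only reason to invert $3$ is Bott periodicity (the Bott class in degree $(8,4)$ is sent to $81$ times itself, so periodicity forces inverting $3$). Second, and more substantively, your identification of the Betti realization leans on tracing through a presumed lambda-ring construction and arguing Betti intertwines $\lambda^i$ with classical exterior powers. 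That is a gap: it presupposes a construction of the motivic $\psi^3$ that you have not exhibited and that does not obviously coincide with Bachmann--Hopkins'. The paper's argument avoids this entirely by using a \emph{characterization}: the classical $\psi^3$ on $\KO^\top[\tfrac13]$ is the unique $E_\infty$-self-map sending the Bott element to $81$ times itself. Since the Betti realization of the Bachmann--Hopkins map is an $E_\infty$-map with that effect on Bott, it must be the classical $\psi^3$, with no need to unpack the construction. You should replace the $\lambda$-ring tracing with this uniqueness argument; otherwise the Betti realization step is not actually proved.
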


\begin{proof}
There is a unital ring map
$\psi^3: \KO\left[\frac{1}{3}\right] \to \KO\left[\frac{1}{3}\right]$ 
\cite[Theorem 3.1]{BH20}, which is an $E_\infty$-map.
Its Betti realization is also an $E_\infty$-map whose
action on the classical Bott element is multiplication by $81$.
These properties uniquely characterize the classical Adams operation.

Now apply very effective covers, and the result about 
$\ko$ follows formally.
\end{proof}

The original result is more general in more than one sense.
First, it works over
general base schemes in which $2$ is invertible, while we only use
the construction over $\R$.
Second, its values are computed more precisely than just compatibility
with the classical values.

\begin{corollary}
\label{cor:psi3}
\mbox{}
\begin{enumerate} 
\item 
$\psi^3:\pi_{*,*}(\ko^\wedge_2) \to
\pi_{*,*}(\ko^\wedge_2)$ is a ring map.
\item
\label{item:psi(S)} 
If $x$ is in the image of the unit map $\pi_{{*,*}}(S^\wedge_2)\to
\pi_{{*,*}}(\ko^\wedge_2)$, then $\psi^3(x)=x$.
\item
\label{part:psi3-Betti}
There is a commutative diagram
\[
\xymatrix{
\pi_{*,*} (\ko^\wedge_2) \ar[r]^{\psi^3} \ar[d] & 
\pi_{*,*} (\ko^\wedge_2) \ar[d] \\
\pi_* (\ko^\top)^\wedge_2 \ar[r]_{\psi^3} & 
\pi_* (\ko^\top)^\wedge_2,
}
\]
where the vertical maps
are Betti realization homomorphisms.
\end{enumerate}
\end{corollary}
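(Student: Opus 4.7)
The plan is to deduce all three parts formally from the $E_\infty$-ring map $\psi^3 : \ko[1/3] \to \ko[1/3]$ produced in Theorem \ref{thm:BH-adams}, using the universal property of the sphere as the initial $E_\infty$-ring. The first observation is that $3$ is a unit in $\Z_2$, so the $2$-completion map factors through $\ko[1/3]$, and indeed $\ko^\wedge_2 = (\ko[1/3])^\wedge_2$. Completing $\psi^3$ at $2$ therefore yields an $E_\infty$-ring map $\psi^3 : \ko^\wedge_2 \to \ko^\wedge_2$, from which part (1) is immediate: any ring map of spectra induces a ring map on bigraded homotopy.

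For part (2), I would observe that both the unit $u: S^\wedge_2 \to \ko^\wedge_2$ and the composite $\psi^3 \circ u$ are $E_\infty$-ring maps. Since $S^\wedge_2$ is initial in the $\infty$-category of $2$-complete $E_\infty$-rings, the mapping space of $E_\infty$-ring maps $S^\wedge_2 \to \ko^\wedge_2$ is contractible, so $\psi^3 \circ u \simeq u$ as maps of spectra. Passing to $\pi_{*,*}$, this is exactly the assertion that $\psi^3$ fixes every element in the image of $u$.

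For part (3), I would appeal to the fact that Betti realization is a symmetric monoidal $\infty$-functor that commutes with $2$-completion. It therefore sends the $E_\infty$-ring map $\psi^3 : \ko^\wedge_2 \to \ko^\wedge_2$ to an $E_\infty$-ring endomorphism of $(\ko^\top)^\wedge_2$. By the identification already made in the proof of Theorem \ref{thm:BH-adams} — the Betti realization of motivic $\psi^3$ on $\KO[1/3]$ is the classical $\psi^3$, which then passes through very effective (i.e.\ connective) covers — this endomorphism agrees with the $2$-completion of the classical Adams operation. The commutativity of the square is then the naturality of Betti realization applied to the single morphism $\psi^3$.

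The only step with any real content is part (2), and even there the main obstacle, which becomes invisible once it is in hand, is knowing that $\psi^3$ is genuinely an $E_\infty$-ring map and not merely multiplicative up to homotopy; this is exactly what the $\infty$-categorical framework of \cite{BH20} provides, and without it the uniqueness argument for the unit would not be available.
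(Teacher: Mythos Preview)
Your proof is correct and takes essentially the same approach as the paper, which simply reads the three parts off from $\psi^3$ being respectively a ring map, unital, and compatible with Betti realization (all stated in Theorem~\ref{thm:BH-adams}). Your initiality argument for part~(2) is a slight elaboration on what ``unital'' already means, and the full $E_\infty$ structure is not strictly needed there---preserving the unit is part of the data of any ring map, even in the homotopy category.
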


\begin{proof}
These are computational consequences of Theorem \ref{thm:BH-adams}. 
Part (1) follows from the fact that $\psi^3$ is a ring map.
Part (2) follows from the fact that $\psi^3$ is unital.
Part (3) follows from the fact that the Betti realization
of the motivic Adams operation is the classical Adams operation.
\end{proof}

\begin{remark}
Corollary \ref{cor:psi3} can also be stated in a localized sense rather
than completed sense, but we will not need that.
\end{remark}

\begin{definition}
\label{def:j}
Let $L$ be the fiber of the map
$\ko\left[\frac{1}{3}\right]\ttoo{\psi^3-1}\ko\left[\frac{1}{3}\right]$.
\end{definition}

Note that our definition of $L$ is already localized; we do
not consider an integral version.  Except for this section,
$L$ is assumed to be $2$-completed.

The most important point for us is that there is a fiber sequence
\[
L^\wedge_2 \tto \ko^\wedge_2 \ttoo{\psi^3-1} \ko^\wedge_2
\]
of completed spectra since completion preserves fiber sequences.

\subsection{Convergence of the effective spectral sequence}
\label{subsctn:convergence}

The \emph{effective spectral sequence} for a motivic spectrum $X$ denotes the spectral sequence associated to the effective slice filtration of $X$.
We refer to \cite{levine-convergence,RSO19} for details on the construction and properties of this spectral sequence.

The effective slice 
filtration \cite{voevodsky2002open} has truncations $f^q(X)$ and 
quotients (i.e., slices) $s_q(X)$.
The $E_1$-page of the effective spectral sequence is
$\pi_{*,*}(s_*(X))$.  In good cases, it converges to 
the homotopy groups of a completion of $X$.
We also use the very effective slice
filtration \cite{spitzweck2012motivic}, but only to define $\ko$.

The slice functors do not necessarily commute with completions, i.e., $s_*(X)^\wedge_2$ and $s_*(X^\wedge_2)$ are not always equivalent. 
Consequently, we must carefully define the spectral sequences
that we use to study completed spectra.
On the other hand, the effective
slices do interact nicely with localizations
\cite[Corollary 4.6]{Spitzweck2008RelationsBS}.

\begin{theorem}
\label{thm:ko-convergence}
There are strongly convergent spectral sequences
$$ E_1^{s,f,w}(\ko) = \pi_{s,w} \left( s_{\frac{s+f}{2}}(\ko)^\wedge_2 \right)
\implies \pi_{s,w}(\ko^\wedge_2)$$ 
and 
$$ E_1^{s,f,w}(L) = \pi_{s,w} \left( s_{\frac{s+f}{2}}(L)^\wedge_2 \right)
\implies \pi_{s,w}(L^\wedge_2)$$ 
with differentials $d_r:E_r^{s,f,w} \to E_r^{s-1,f+2r-1,w}$.
\end{theorem}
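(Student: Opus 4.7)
The plan is to obtain both spectral sequences by $2$-completing the effective slice tower. For $\ko$, I would start with the tower $\cdots \to f^{q+1}(\ko) \to f^q(\ko) \to \ko$ whose successive cofibers are the slices $s_q(\ko)$. Since $\ko$ is very effective and the slices have growing topological connectivity (explicit from Theorem \ref{thm:ko-slices}, where the lowest stem appearing in $s_q(\ko)$ grows linearly in $q$), the general convergence results for the effective slice filtration recalled in \cite{levine-convergence, RSO19} give strong convergence of the integral effective spectral sequence to $\pi_{*,*}(\ko)$.

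To upgrade to the $2$-completed version, I would apply $2$-completion to the tower levelwise. The cofibers of the completed tower are $s_q(\ko)^\wedge_2$ because the slices are wedges of suspensions of $H\Z$ and $H\F_2$ by Theorem \ref{thm:ko-slices}, and $2$-completion preserves cofiber sequences of such bounded-below spectra whose homotopy groups are degreewise finitely generated. The homotopy limit of the $2$-completed truncations is $\ko^\wedge_2$, via a Milnor sequence together with the vanishing of $\lim^1$ guaranteed by the increasing connectivity of $f^q(\ko)$. This produces the first spectral sequence with the claimed $E_1$-page, converging strongly to $\pi_{*,*}(\ko^\wedge_2)$. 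The differential pattern $d_r : E_r^{s,f,w} \to E_r^{s-1, f+2r-1, w}$ is then a direct translation of the usual effective slice bidegree convention, rewritten in terms of the Adams--Novikov filtration (which is twice the effective filtration minus the stem, as recorded in Section \ref{subsec:notation}).

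For $L$, I would exploit Definition \ref{def:j} together with the naturality of the slice filtration. The map $\psi^3 - 1 : \ko \to \ko$ induces a map of effective slice towers, and taking levelwise fibers produces a filtration on $L$ whose $q$-th subquotient is the fiber of the induced map $s_q(\psi^3 - 1) : s_q(\ko) \to s_q(\ko)$. Since this coincides with the effective slice filtration of $L$ (the map of towers is compatible with effective covers because $\psi^3-1$ preserves effectivity), a second round of $2$-completion, justified by the same exactness properties used for $\ko$, yields the spectral sequence for $L$.

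The main obstacle is justifying carefully that $2$-completion commutes with both the effective slice tower and with the fiber construction defining $L$. The resolution should rest on the explicit description of the slices of $\ko$ given in Theorem \ref{thm:ko-slices} as bounded-below wedges of shifted Eilenberg--Mac Lane spectra on finitely generated abelian groups; standard exactness properties of $2$-completion on such spectra handle both commutation statements, and the good behavior of slices under localization noted in \cite{Spitzweck2008RelationsBS} serves as a sanity check in the parallel localized story. Strong convergence in both cases follows from the fact that in any fixed tri-degree $(s,f,w)$ only finitely many slices contribute, so the $E_r$-pages stabilize levelwise and no $\lim^1$ obstruction arises.
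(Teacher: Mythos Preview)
Your overall strategy of $2$-completing the effective slice tower is the same as the paper's, but two of your key steps do not go through as stated.

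First, the connectivity claim is false over $\R$. You assert that ``the lowest stem appearing in $s_q(\ko)$ grows linearly in $q$,'' but $s_q(\ko)$ always contains a summand $\Sigma^{q,q}H\F_2$ (from $h_1^q$), and $(H\F_2)_{*,*} = \F_2[\tau,\rho]$ has $\rho$ in stem $-1$; hence $\pi_{s,w}(s_q(\ko))$ is nonzero for arbitrarily negative $s$. For the same reason, the truncations $f^q(\ko)$ do not become highly connected in any sense that would force $\holim_q f^q(\ko)\simeq \ko$ or kill the relevant $\lim^1$. In fact the limit of the uncompleted slice tower is \emph{not} $\ko$: by \cite{RSO19} and \cite{ARO2020} one has $\holim_q f^q(\ko)\simeq \ko^\wedge_\eta$. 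The paper's proof therefore identifies the target of the completed tower as $(\ko^\wedge_\eta)^\wedge_2$, and then invokes \cite[Theorem~1]{HKO11} to conclude that this is $\ko^\wedge_2$. Your argument skips this $\eta$-completion step entirely, and without it the identification of the abutment fails.

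Second, your strong convergence argument is also incorrect. The claim that ``in any fixed tri-degree $(s,f,w)$ only finitely many slices contribute'' is false: for instance, at $(s,w)=(0,0)$ the elements $\rho^a h_1^a$ lie in $E_1^{0,2a,0}(\ko)$ for all $a\geq 0$, so infinitely many slices contribute. The paper instead appeals to Boardman's conditional convergence criterion \cite[Theorem~7.1]{Boardman}, whose derived-$E_\infty$ hypothesis is verified \emph{a posteriori} by the explicit computations carried out later in the manuscript (Section~\ref{sctn:ko-eff} for $\ko$, Sections~\ref{subsec:d1-diff}--\ref{subsec:higher-diff} for $L$). Your treatment of $L$ via levelwise fibers is fine and matches the paper's approach, but it inherits both of the gaps above.
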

 
We remind the reader that our grading of the effective spectral
sequence is different than the standard grading in the literature.
Briefly, $s$ represents the topological stem, $f$ represents the Adams--Novikov filtration (not the effective filtration), and $w$ represents the
motivic weight.
See Section \ref{subsec:notation} for more discussion.

\begin{proof}
We discuss the spectral sequence for $\ko$ in detail;
most of the argument for $L$ is the same.

Consider the
effective slice tower
\[
f^0(\ko) \leftarrow 
f^1(\ko)\leftarrow 
f^2(\ko)\leftarrow \cdots.
\]
Now take the $2$-completion of this tower to obtain
\[
f^0(\ko)^\wedge_2 \leftarrow f^1(\ko)^\wedge_2 
    \leftarrow f^2(\ko)^\wedge_2 \leftarrow \cdots.
\]
The resulting layers are the same as $s_*(\ko)^\wedge_2$
since completion respects cofiber sequences.
Beware that this is not necessarily the same
as the slice tower of the completion $\ko^\wedge_2$, 
since slices do not interact nicely with completions.
The associated spectral sequence of this tower is the one
described in the statement of the theorem.

It remains to determine the target of the completed spectral sequence.
The limit of the uncompleted slice tower of $\ko$ is equivalent to its 
$\eta$-completion \cite{RSO19}, \cite{ARO2020}, i.e.,
$$\holim f^n(\ko)\simeq \ko^\wedge_\eta.$$ 
Completion respects limits, so
the limit
$\holim (f^n(\ko)^\wedge_2)$ of the completed slice tower is
equivalent to $(\ko^\wedge_\eta)^\wedge_2$, which is equivalent
to $\ko^\wedge_2$ by \cite[Theorem 1]{HKO11}.
Consequently, the completed effective spectral sequence 
of $\ko$ converges to the homotopy of $\ko^\wedge_2$, as desired.

Strong convergence follows from \cite[Theorem 7.1]{Boardman},
which has a technical hypothesis involving derived {$E_\infty$-pages}. 
For $\ko$, this technical hypothesis follows directly from
the computations of Section \ref{sctn:ko-eff}.
For $L$, the technical hypothesis follows directly from
the computations in
Sections \ref{subsec:d1-diff} and \ref{subsec:higher-diff}.
\end{proof}

\begin{remark}
By construction, we have a fiber sequence
\[
s_*(L)^\wedge_2 \tto s_*(\ko)^\wedge_2 \ttoo{\psi^3-1} s_*(\ko)^\wedge_2,
\]
which yields a long exact sequence
\[
\cdots \tto E_1^{s,f,w}(L) \tto E_1^{s,f,w}(\ko) \ttoo{\psi^3-1} E_1^{s,f,w}(\ko) \tto \cdots.
\]
This long exact sequence will be our main tool for computing 
$E_1(L)$ in Section \ref{subsec:slice-L}.
\end{remark}

\subsection{Comparison between $\R$-motivic and $\C$-motivic homotopy}
\label{subsctn:R-C-compare}

Extension of scalars induces a functor 
$(-)^\C$
from $\R$-motivic stable homotopy
theory to $\C$-motivic stable homotopy theory.
Following \cite[Section 3]{R-paper},
we recall its computational interpretation.
These results will play a central role later in Section \ref{subsec:extensions}
when we consider hidden extensions.

The homotopy element $\rho$ maps to zero under extension of scalars.
Therefore, extension of scalars induces a natural homomorphism
\[
\pi_{*,*} (X/\rho) \to \pi_{*,*}( X^\C)
\]
for any $\R$-motivic spectrum $X$.

\begin{proposition}
\label{prop:R-C-compare}
The maps
\[
\pi_{*,*} (\ko/\rho) \to \pi_{*,*}( \ko^\C)
\]
and
\[
\pi_{*,*} (L/\rho) \to \pi_{*,*}( L^\C)
\]
are isomorphisms.
\end{proposition}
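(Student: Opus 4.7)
The plan is to establish the statement for $\ko$ first, then deduce it for $L$ by a five-lemma argument applied to the defining fiber sequence.

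For $\ko$, the strategy is to compare effective spectral sequences on either side of the map $\ko/\rho \to \ko^\C$. Since the effective slice filtration is functorial and the cofiber functor $-/\rho$ preserves cofiber sequences, smashing the slice tower of $\ko$ with $S/\rho$ yields a spectral sequence converging to $\pi_{*,*}(\ko/\rho)$, while extension of scalars yields the effective spectral sequence of $\ko^\C$. Both $E_1$-pages are computed by the formula of Theorem~\ref{thm:ko-slices}, with the $\R$-motivic Eilenberg--Mac Lane spectra replaced by their $\C$-motivic counterparts on the $\C$-side. The $E_1$-page comparison thus reduces to the identification $\pi_{*,*}(H\Z^\R/\rho) \cong \pi_{*,*}(H\Z^\C)$ (and the analogous statement for $H\F_2$), which is the $\rho$-Bockstein principle discussed in \cite[Section~3]{R-paper}. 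Because the comparison map is induced by a motivic functor, all effective differentials and hidden extensions are automatically respected, so the isomorphism of $E_1$-pages propagates to the $E_\infty$-pages and, by strong convergence (Theorem~\ref{thm:ko-convergence}), to homotopy groups.

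For $L$, smashing the defining cofiber sequence $L \to \ko \ttoo{\psi^3-1} \ko$ with $S/\rho$ yields a cofiber sequence
\[
L/\rho \tto \ko/\rho \ttoo{\psi^3-1} \ko/\rho,
\]
while extension of scalars yields
\[
L^\C \tto \ko^\C \ttoo{\psi^3-1} \ko^\C.
\]
The natural comparison map furnishes a commutative ladder of long exact sequences of bigraded homotopy groups. Four out of every five consecutive vertical maps in this ladder are isomorphisms by the $\ko$ case just established, so the five lemma delivers the desired isomorphism for $L$.

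The main obstacle is thus the slice-level identification of $\R$-motivic Eilenberg--Mac Lane spectra modulo $\rho$ with their $\C$-motivic counterparts; once that $\rho$-Bockstein input is in hand (and the slight technical issue that $-/\rho$ commutes with the slice truncations on the cellular spectrum $\ko$ is checked), the rest of the argument reduces to a routine ladder chase in compatible strongly convergent spectral sequences.
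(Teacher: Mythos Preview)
Your argument for $L$ via the five lemma is exactly what the paper does. Your argument for $\ko$ is correct but takes a genuinely different route. The paper does not use the effective spectral sequence here at all; instead it invokes the \emph{Adams} spectral sequence, observing that the Adams $E_2$-page for $\ko$ is computed by the cobar complex for the subalgebra $A(1)$ of the motivic Steenrod algebra, and that multiplication by $\rho$ is injective on that cobar complex. This is precisely the argument of \cite[Section~3]{R-paper} transported from the sphere to $\ko$, so the paper's proof is essentially a one-line citation.

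Your effective-spectral-sequence approach trades that single Adams-side input for a slice-level one: the identification $\pi_{*,*}(HA^\R/\rho) \cong \pi_{*,*}(HA^\C)$ for $A=\Z,\F_2$, which is an elementary direct computation rather than the content of \cite[Section~3]{R-paper} (that section is about the Adams cobar complex, not Eilenberg--Mac~Lane spectra). Your approach has the virtue of staying entirely within the effective framework that the rest of the manuscript uses, and the required slice-level input is arguably more elementary than the $A(1)$ cobar injectivity. The paper's approach has the virtue of brevity: it simply points to an existing argument. Note also that the ``technical issue'' you flag---whether $-/\rho$ commutes with slice truncations---is not actually needed: you only need that smashing the slice tower with the finite complex $S/\rho$ yields some strongly convergent tower for $\ko/\rho$, not that it is the slice tower of $\ko/\rho$. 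The point that does need care is that base change carries the $\R$-motivic slice tower of $\ko$ to a tower converging to $\ko^\C$; this holds because base change along $\Spec\C \to \Spec\R$ preserves effective slices.
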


\begin{proof}
The argument for the first isomorphism is essentially identical
to the argument in \cite[Section 3]{R-paper}.
We need that the Adams $E_2$-page for $\ko$ is computed by
the cobar complex associated to the subalgebra $A(1)$ of 
the motivic Steenrod algebra, and multiplication by $\rho$
is injective on this cobar complex.

For the second isomorphism, consider the diagram
\[
\xymatrix{
\cdots \ar[r] & \pi_{*,*}(L/\rho) \ar[r] \ar[d] & \pi_{*,*}(\ko/\rho) \ar[r]\ar[d] & \pi_{*,*}(\ko/\rho) \ar[r]\ar[d] & \cdots \\
\cdots \ar[r] & \pi_{*,*}(L^\C) \ar[r] & \pi_{*,*}(\ko^\C) \ar[r] & \pi_{*,*}(\ko^\C) \ar[r] & \cdots.
}
\]
The vertical maps are induced by extension of scalars. The
horizontal maps arise from the fiber sequence
\[
L \tto \ko \ttoo{\psi^3-1} \ko,
\]
so the rows are exact.
We already know that the second and third vertical arrows are isomorphisms,
so the first vertical arrow is also an isomorphism by the Five Lemma.
\end{proof}

\section{$\C$-motivic computations}
\label{sctn:C}

In this section, we carry out a preliminary computation of the
effective spectral sequences for $\ko^\C$ and $L^\C$.
We also consider the $\eta$-periodic spectral sequences.
We are primarily interested in $\R$-motivic computations,
but we will
need to compare our $\R$-motivic computations to their $\C$-motivic
counterparts.

\subsection{The effective spectral sequence for $\ko^\C$}
\label{subsctn:ko-C}

We review the effective spectral sequence for $\ko^\C$.

\begin{proposition}
\label{prop:ko-C-E1}
The effective spectral sequence for $\ko^\C$ takes the form
\[
E_1(\ko^\C) = \Z[\tau, h_1, v_1^2]/ 2 h_1.
\]
\end{proposition}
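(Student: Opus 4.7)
The plan is to combine Theorem \ref{thm:ko-slices}, applied in the $\C$-motivic setting, with the known homotopy of the $\C$-motivic Eilenberg--Mac Lane spectra. The slice calculation of Ananyevskiy--R\"ondigs--{\O}stv{\ae}r holds over any base field of characteristic different from $2$, so the same formula $s_*(\ko^\C) = H\Z[h_1,v_1^2]/(2h_1)$ applies, with the bookkeeping convention that a monomial of tridegree $(s,f,w)$ contributes a summand $\Sigma^{s,w}HA$ to the $\frac{s+f}{2}$-th slice, where $A = \Z$ or $\F_2$ depending on whether the monomial is torsion-free or $2$-torsion.

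Next I would recall the homotopy of the $2$-completed $\C$-motivic Eilenberg--Mac Lane spectra: $\pi_{*,*}(H\Z^\C) = \Z[\tau]$ and $\pi_{*,*}(H\F_2^\C) = \F_2[\tau]$, each concentrated in stem $0$, with $\tau \in \pi_{0,-1}$. Applying $\pi_{*,*}$ to the completed slices of Theorem \ref{thm:ko-convergence} then produces, for every monomial $m$ of tridegree $(s,f,w)$, the classes $\tau^k \cdot m$ in $E_1^{s,f,w-k}$ for all $k \geq 0$, taking values in $\Z$ or $\F_2$ as appropriate. Summing these contributions recovers the underlying graded abelian group of $\Z[\tau, h_1, v_1^2]/(2h_1)$.

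Finally, I would verify the multiplicative structure. The ring structure on $E_1$ is inherited from the ring structure on $s_*(\ko^\C)$ together with multiplication by $\tau$ from the unit map, so the relations $v_1^2 \cdot h_1 = h_1 \cdot v_1^2$ and $2h_1 = 0$ are immediate from the slice description. The one spot requiring attention is the analogue of Equation (\ref{eq:ARO}): $\R$-motivically one has $(\tau h_1)^2 = \tau^2 \cdot h_1^2 + \rho^2 \cdot v_1^2$, but since $\rho$ vanishes after extension of scalars to $\C$, the correction term disappears and the polynomial identity $(\tau h_1)^2 = \tau^2 h_1^2$ holds on the nose.

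No serious obstacle arises here; once the slice ring and the homotopy of $H\Z^\C$ and $H\F_2^\C$ are in hand, the proposition is essentially a bookkeeping exercise. The most subtle point is really negative: namely, observing that the $\rho$-correction which will later complicate the $\R$-motivic computation is simply absent $\C$-motivically, so that the $E_1$-page is a genuine polynomial ring modulo the single relation $2h_1 = 0$.
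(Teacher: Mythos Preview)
Your proposal is correct and follows the same approach as the paper: apply Theorem~\ref{thm:ko-slices} and take homotopy groups of the slices, then observe that the multiplicative structure has no correction terms. The paper's proof is much terser---it simply says ``there are no possible error terms''---while you spell out the homotopy of the $\C$-motivic Eilenberg--Mac Lane spectra and explain that the absence of $\rho$ is what makes the correction in Equation~(\ref{eq:ARO}) vanish, but the underlying argument is the same.
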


\begin{proof}
This follows from Theorem \ref{thm:ko-slices} by taking stable
homotopy groups.  There are no possible error terms to complicate
the multiplicative structure.
\end{proof}

Table \ref{tab:ko-C-E1-generators}
lists the generators of $E_1(\ko^\C)$.
Figure \ref{fig:ko-C-E1} depicts $E_1(\ko^\C)$ in graphical form.

\begin{longtable}{lllll}
\caption{Multiplicative generators for $E_1(\ko^\C)$
\label{tab:ko-C-E1-generators} 
} \\
\hline
coweight & $(s, f, w)$ & $x$ & $d_1(x)$ & $\psi^3(x)$ \\
\hline \endfirsthead
\caption[]{Multiplicative generators for $E_1(\ko^\C)$} \\
\hline
coweight & $(s, f, w)$ & $x$ & $d_1(x)$ & $\psi^3(x)$ \\
\hline \endhead
\hline \endfoot
$0$ & $(1,1,1)$ & $h_1$ & & $h_1$ \\
$1$ & $(0,0,-1)$ & $\tau$ & & $\tau$ \\
$2$ & $(4,0,2)$ & $v_1^2$ & $\tau h_1^3$ & $9 v_1^2$
\end{longtable}

\begin{proposition}
\label{prop:ko-C-diff}
Table \ref{tab:ko-C-E1-generators} gives the values of the 
effective $d_1$ differential on the multiplicative generators
of $E_1(\ko^\C)$.
\end{proposition}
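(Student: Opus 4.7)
The plan is to dispatch $d_1(\tau)$ and $d_1(h_1)$ by short bidegree/permanent-cycle arguments, and then to identify $d_1(v_1^2) = \tau h_1^3$ by combining a uniqueness-of-target count with a single input from the classical Adams--Novikov spectral sequence for $\ko^\top$. The only step that requires anything beyond bookkeeping in $E_1(\ko^\C)$ is ruling out $d_1(v_1^2) = 0$.

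First I would handle $\tau$ and $h_1$. The element $\tau$ lies at $(0,0,-1)$, so any target of $d_1(\tau)$ would have stem $-1$; since every monomial in $E_1(\ko^\C) = \Z[\tau, h_1, v_1^2]/(2h_1)$ has nonnegative stem, the target bidegree is empty and $d_1(\tau) = 0$. For $h_1$, the cleanest argument is that $h_1$ is a permanent cycle because it detects the motivic Hopf map $\eta \in \pi_{1,1}(\ko^\C)$, which is nonzero. Alternatively, a bidegree check suffices: the target would lie in stem zero and weight $1$, but monomials of stem zero are pure powers of $\tau$, which have nonpositive weight.

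For $d_1(v_1^2)$, the plan has two parts. First, a monomial count in the target bidegree of $d_1(v_1^2)$ shows that $\tau h_1^3$ is the unique candidate, and the relation $2h_1 = 0$ then forces $d_1(v_1^2) \in \{0, \tau h_1^3\}$. Second—and this is the substantive step—I would rule out the zero case by transferring the classical differential $d_3(v_1^2) = \eta^3$ in the Adams--Novikov spectral sequence for $\ko^\top$ to the motivic setting. This can be done either by $\tau$-inversion (the $\tau$-inverted effective spectral sequence for $\ko^\C$ recovers the classical ANSS for $\ko^\top$, in view of Remark \ref{rmk:slice-ANSS}) or by naturality under Betti realization. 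Either avenue forces a nonzero motivic lift of the classical target, and $\tau h_1^3$ is the only option. If one prefers a direct citation instead, the relevant slice differential is computed in \cite{ARO2020}. The main obstacle, then, is the transfer argument; the rest is bookkeeping.
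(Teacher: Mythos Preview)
Your argument is correct and lands in the same place as the paper's, though by a slightly more hands-on route. The paper dispatches all three generators at once by invoking either the identification of the $\C$-motivic effective spectral sequence with the $\C$-motivic Adams--Novikov spectral sequence (via \cite{Levine15}), or the prior computation of $\pi_{*,*}(\ko^\C)$ from \cite{IS11}, and then observes that the differentials are forced. You instead isolate $d_1(v_1^2)$ as the only interesting case, pin down the target by a monomial count, and then transfer the classical $d_3(v_1^2)=\eta^3$ through Betti realization or $\tau$-inversion. The paper's approach is terser and leans on a global structural identification; yours is more elementary in that it only needs one classical differential and naturality, rather than the full weight-$0$ comparison theorem. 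Both are valid, and your fallback citation to \cite{ARO2020} is exactly what the paper uses in the parallel $\R$-motivic argument (Proposition~\ref{prop:ko-diff}).
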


\begin{proof}
The 
$\C$-motivic effective spectral sequence is identical to the
$\C$-motivic Ad\-ams--Nov\-i\-kov spectral sequence up to reindexing.
This claim does not appear to be cleanly stated in the literature,
but it is a computational consequence of the weight $0$ result of
\cite[Theorem 1]{Levine15}.
Alternatively, there is only one pattern of effective differentials
that computes the motivic stable homotopy groups of $\ko^\C$, 
which were previously described 
using the $\C$-motivic Adams spectral sequence \cite{IS11}.
\end{proof}

\begin{theorem}
\label{thm:ko-C-Einfty}
The $E_\infty$-page of the 
effective spectral sequence for $\ko^\C$ takes the form
\[
E_\infty(\ko^\C) = \frac{\Z[\tau, h_1, 2 v_1^2, v_1^4]}{2 h_1, 
\tau h_1^3, (2 v_1^2)^2 = 4 \cdot v_1^4}.
\]
\end{theorem}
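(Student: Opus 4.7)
The plan is to compute $E_2(\ko^\C)$ directly from the $d_1$-differential in Table~\ref{tab:ko-C-E1-generators} and then verify that $d_r = 0$ for $r \geq 2$. Using the Leibniz rule together with $d_1(\tau) = d_1(h_1) = 0$ and $d_1(v_1^2) = \tau h_1^3$, we obtain
\[
d_1(\tau^a h_1^b v_1^{2n}) = n \tau^{a+1} h_1^{b+3} v_1^{2(n-1)}
\]
in $E_1(\ko^\C)$; since $2 h_1 = 0$, only the parity of $n$ affects this expression.

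To compute $E_2$, I would filter $E_1$ by $v_1^2$-power, writing $E_1 = \bigoplus_n A \cdot v_1^{2n}$ with $A = \Z[\tau, h_1]/(2h_1)$, so that $d_1$ restricts on the $n$th piece to multiplication by $n \tau h_1^3$. Using the additive splitting $A = \Z[\tau] \oplus h_1 \F_2[\tau, h_1]$, a direct check shows that multiplication by $n \tau h_1^3$ has kernel all of $A$ when $n$ is even and kernel $2\Z[\tau]$ when $n$ is odd; dually, the image entering the $n$th layer (from the $(n+1)$st) is $\tau h_1^3 \F_2[\tau] + \tau h_1^4 \F_2[\tau, h_1]$ when $n$ is even and zero when $n$ is odd. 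Packaging these: even-$n$ layers contribute $\Z[\tau]$ together with $\tau^a h_1$ and $\tau^a h_1^2$ for $a \geq 0$ and $h_1^b$ for $b \geq 3$ (only at $a = 0$), while odd-$n$ layers contribute $2\Z[\tau] \cdot v_1^{2n}$. These pieces assemble into the ring $\Z[\tau, h_1, 2v_1^2, v_1^4]/(2h_1,\, \tau h_1^3,\, (2v_1^2)^2 - 4 v_1^4)$: the generators $2v_1^2$ and $v_1^4$ capture the odd and even $v_1^2$-layers respectively, $\tau h_1^3 = 0$ because it is the boundary $d_1(v_1^2)$, and $(2v_1^2)^2 = 4 v_1^4$ is inherited from $E_1$.

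It remains to show $E_2 = E_\infty$. By Proposition~\ref{prop:ko-C-diff}, the effective spectral sequence agrees with the $\C$-motivic Adams--Novikov spectral sequence up to reindexing, and the latter collapses after the single family of differentials handled above, reflecting the classical story for $\ko^\top$. An elementary alternative is a direct tridegree check: for any potential $d_r$ with $r \geq 2$, the target tridegree $(s-1, f + 2r - 1, w)$ would need to contain a nonzero class of $E_2$, and a case analysis indexed by $h_1$-exponent and parity of $n$ (organized by coweight $s - w$) rules this out. The main obstacle is the bookkeeping in the $E_2$ computation, since the interaction between the torsion-free and $2$-torsion parts of $A$ must be tracked in each $v_1^2$-layer; once $E_2$ is identified, the collapse is essentially formal.
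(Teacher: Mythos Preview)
Your proof is correct and follows essentially the same approach as the paper: compute $E_2$ from the $d_1$-differential of Proposition~\ref{prop:ko-C-diff}, then observe that no higher differentials are possible for degree reasons. The paper's proof is a one-liner (``For degree reasons, there can be no higher differentials''), leaving the $E_2$-computation implicit; you have simply made that computation explicit via the $v_1^2$-layer decomposition, which is a clean way to do it. Your alternative justification via the Adams--Novikov comparison is also valid and indeed matches how the paper obtained the $d_1$-differential in the first place.
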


\begin{proof}
For degree reasons, there can be no higher differentials in the
effective spectral sequence for $\ko^\C$.
\end{proof}

Table \ref{tab:ko-C-Einfty-generators}
lists the multiplicative generators of $E_\infty(\ko^\C)$.
Figure \ref{fig:ko-C-Einfty} depicts $E_\infty(\ko^\C)$ in graphical form.

\begin{longtable}{llll}
\caption{Multiplicative generators for $E_\infty(\ko^\C)$
\label{tab:ko-C-Einfty-generators} 
} \\
\hline
coweight & $(s, f, w)$ & $x$ & $\psi^3(x)$ \\
\hline \endfirsthead
\caption[]{Multiplicative generators for $E_\infty(\ko^\C)$} \\
\hline
coweight & $(s, f, w)$ & $x$ & $\psi^3(x)$  \\
\hline \endhead
\hline \endfoot
$0$ & $(1,1,1)$ & $h_1$ & $h_1$ \\
$1$ & $(0,0,-1)$ & $\tau$ & $\tau$ \\
$2$ & $(4,0,2)$ & $2v_1^2$ & $9 \cdot 2 v_1^2$ \\
$4$ & $(8,0,4)$ & $v_1^4$ & $81 v_1^4$
\end{longtable}

\begin{remark}
\label{rem:ko-C-homotopy}
There are no possible hidden extensions
in $E_\infty(\ko^\C)$ for degree reasons.
Therefore,
Theorem \ref{thm:ko-C-Einfty} describes 
$\pi_{*,*}(\ko^\C)$ as a ring.
\end{remark}

\subsection{The effective $E_1$-page for $L^\C$}

Our next goal is to describe the effective $E_1$-page $E_1(L^\C)$.
First we must study the values of $\psi^3$ on $\ko^\C$.

\begin{lemma}
\label{lem:psi3-Einfty-C-values}
The map $E_\infty(\ko^\C) \to E_\infty(\ko^\C)$ induced by $\psi^3$ on
effective $E_\infty$-pages 
takes the values shown in Table \ref{tab:ko-C-Einfty-generators}.
\end{lemma}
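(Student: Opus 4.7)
The plan is to extract each value from Corollary \ref{cor:psi3}. Since $\psi^3$ preserves the effective filtration, it induces an endomorphism of $E_\infty(\ko^\C)$; by Corollary \ref{cor:psi3}(1) this is a ring map, so I only need to compute its action on the four multiplicative generators $h_1$, $\tau$, $2v_1^2$, $v_1^4$.

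For $\tau$ and $h_1$, I would observe that these classes respectively detect the homotopy elements $\tau \in \pi_{0,-1}(S^{\C,\wedge}_2)$ and $\eta \in \pi_{1,1}(S^{\C,\wedge}_2)$ coming from the sphere. By Corollary \ref{cor:psi3}(\ref{item:psi(S)}), the images of these classes under the unit map are fixed by $\psi^3$, and reading this back through the spectral sequence forces $\psi^3(\tau) = \tau$ and $\psi^3(h_1) = h_1$ on $E_\infty$. No indeterminacy arises because the tridegrees $(0,*,-1)$ and $(1,*,1)$ contain no nonzero classes of higher Adams--Novikov filtration in $E_\infty(\ko^\C)$, as one sees directly from Theorem \ref{thm:ko-C-Einfty}.

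For $2v_1^2$ and $v_1^4$, the plan is to pass to classical homotopy via Betti realization. First I would inspect $E_\infty(\ko^\C)$ to verify that the tridegrees $(4,*,2)$ and $(8,*,4)$ contain no nonzero classes of positive Adams--Novikov filtration; any such candidate would be divisible by $\tau h_1^3$, which already vanishes on $E_\infty$. Therefore $2v_1^2$ and $v_1^4$ are the unique representatives of their corresponding classes in $\pi_{4,2}(\ko^{\C,\wedge}_2)$ and $\pi_{8,4}(\ko^{\C,\wedge}_2)$. Betti realization identifies these two groups with $\pi_4(\ko^\top)^\wedge_2 \cong \Z_2$ and $\pi_8(\ko^\top)^\wedge_2 \cong \Z_2$, on which the classical Adams operation acts by multiplication by $9$ and $81$ respectively (the standard formula $\psi^3 = 3^{2n}$ on $\pi_{4n}(\ko^\top)$ for $n=1,2$). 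Combined with Corollary \ref{cor:psi3}(\ref{part:psi3-Betti}), this yields $\psi^3(2v_1^2) = 9 \cdot 2v_1^2$ and $\psi^3(v_1^4) = 81 v_1^4$.

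The main obstacle is the Betti realization step, where I must verify that the relevant Betti realization homomorphisms are isomorphisms on $2$-completions in these bidegrees, and identify the classical value of $\psi^3$ on the relevant Bott classes. Both ingredients are essentially standard: the matching of motivic and topological homotopy in these degrees is implicit in Theorem \ref{thm:ko-C-Einfty} and Remark \ref{rem:ko-C-homotopy}, while the numerical factors of $9$ and $81$ come from the classical theory. Everything else is bookkeeping, since multiplicativity of $\psi^3$ then propagates the four generator values throughout all of $E_\infty(\ko^\C)$.
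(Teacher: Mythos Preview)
Your proposal is correct and follows essentially the same route as the paper, which dispatches all four generators in one line by comparison along Betti realization. The only difference is that you handle $\tau$ and $h_1$ via Corollary~\ref{cor:psi3}(\ref{item:psi(S)}) (the unit-map property) rather than Betti realization; both arguments are valid, and your version is in fact more careful about checking that no higher-filtration indeterminacy interferes.
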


\begin{proof}
All values follow immediately by comparison along Betti 
realization to the values of classical $\psi^3$.
\end{proof}

\begin{lemma}
\label{lem:psi3-E1-C-values}
The map $E_1(\ko^\C) \to E_1(\ko^\C)$ induced by $\psi^3$ on
effective $E_1$-pages 
takes the values shown in Table \ref{tab:ko-C-E1-generators}.
\end{lemma}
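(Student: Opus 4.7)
The plan is to bootstrap from Lemma \ref{lem:psi3-Einfty-C-values}, which already determines $\psi^3$ on the corresponding classes in $E_\infty(\ko^\C)$. Since $\psi^3\colon \ko^\C \to \ko^\C$ is a map of motivic spectra, it preserves the effective slice filtration and induces a map on each page of the effective spectral sequence that is compatible with the subquotient maps $E_r \to E_{r+1}$ and in particular with Lemma \ref{lem:psi3-Einfty-C-values} on $E_\infty$. I would verify the three values in Table \ref{tab:ko-C-E1-generators} one at a time.

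For $h_1$ and $\tau$, the strategy is to observe that the tridegrees $(1,1,1)$ and $(0,0,-1)$ admit no differentials entering or leaving on any page. This is immediate from Proposition \ref{prop:ko-C-E1}: the candidate source and target tridegrees have no classes, because the only monomials $\tau^a h_1^b v_1^{2c}$ with the required values of $(s,f,w)$ would force inconsistent values of $b$ or negative filtration. Consequently the comparison maps
\[
E_1^{1,1,1}(\ko^\C) \to E_\infty^{1,1,1}(\ko^\C) \quad\text{and}\quad E_1^{0,0,-1}(\ko^\C) \to E_\infty^{0,0,-1}(\ko^\C)
\]
are isomorphisms, and $\psi^3(h_1)=h_1$ and $\psi^3(\tau)=\tau$ on $E_1$ follow directly from Lemma \ref{lem:psi3-Einfty-C-values}.

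The only nontrivial case is $v_1^2$, which does not survive to $E_\infty$. I would write $\psi^3(v_1^2)=n v_1^2$ using the identification $E_1^{4,0,2}(\ko^\C)=\Z\{v_1^2\}$, and then pin down $n$ by passing to $E_\infty$. Since $d_1(v_1^2)=\tau h_1^3$ has order $2$ in $E_1^{3,3,2}$ (using $2h_1=0$), the subgroup of $d_1$-cycles in $E_1^{4,0,2}$ is exactly $\Z\{2 v_1^2\}$; as there are no incoming differentials and no higher differentials for degree reasons, this subgroup coincides with $E_\infty^{4,0,2}(\ko^\C)$. The cycle $\psi^3(2v_1^2)=2n v_1^2$ therefore represents $n\cdot(2v_1^2)$ in $E_\infty^{4,0,2}$, and comparison with $\psi^3(2v_1^2)=9\cdot 2v_1^2$ from Lemma \ref{lem:psi3-Einfty-C-values} forces $n=9$. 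The main (though still mild) obstacle is this last step, which requires keeping track of $E_\infty^{4,0,2}$ as the index-$2$ subgroup of $E_1^{4,0,2}$ and using the fact that multiplication by $2$ is injective on $E_\infty^{4,0,2}=\Z\{2v_1^2\}$.
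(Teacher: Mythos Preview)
Your proposal is correct and follows the same idea as the paper: deduce the $E_1$ values from the $E_\infty$ values of Lemma~\ref{lem:psi3-Einfty-C-values} using compatibility of $\psi^3$ with the passage $E_1 \to E_\infty$. The paper's proof is a single sentence (``compatibility \ldots\ immediately yields all values''), and you have simply unpacked what ``immediately'' means, especially in the case of $v_1^2$, where you use that $E_\infty^{4,0,2} = \Z\{2v_1^2\}$ sits as a torsion-free subgroup of $E_1^{4,0,2} = \Z\{v_1^2\}$ to read off $n=9$ from $\psi^3(2v_1^2) = 9\cdot 2v_1^2$. One minor remark: in the analogous $\R$-motivic argument the paper instead squares, using $\psi^3(v_1^4)=81 v_1^4$ in $E_\infty$ together with $\psi^3((v_1^2)^2)=(\psi^3(v_1^2))^2$ to force $\psi^3(v_1^2)=9v_1^2$; that route also works here, but your use of $2v_1^2$ is equally valid and arguably more direct.
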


\begin{proof}
The values of $\psi^3$ on $E_1(\ko^\C)$ are compatible
with the values of $\psi^3$ on $E_\infty(\ko^\C)$, as 
shown in Table \ref{tab:ko-C-Einfty-generators}
(see also Lemma \ref{lem:psi3-Einfty-C-values}).
This immediately yields all values.
\end{proof}

In order to describe $E_1(L^\C)$, we need some elementary
number theory.

\begin{definition}
Let $v(n)$ be the $2$-adic valuation of $n$, i.e., the 
exponent of the largest
power of $2$ that divides $n$.
\end{definition}

\begin{lemma}
\label{lem:valuation}
\[
v(3^n-1) =
\left\{
\begin{array}{l@{\quad \mathrm{if} \quad}l}
1 & v(n) = 0 \\
2 + v(n) & v(n) > 0
\end{array}
\right.
\]
\end{lemma}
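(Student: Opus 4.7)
The plan is to prove this by induction on $v(n)$, handling the odd case as the base and the even case via the factorization $3^n - 1 = (3^{n/2}-1)(3^{n/2}+1)$. Alternatively, one can invoke the classical ``Lifting the Exponent'' lemma at the prime $2$, but since only a single specific numerical statement is needed, a self-contained inductive argument is cleaner.

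First I would dispose of the case $v(n)=0$, i.e., $n$ odd. Since $3 \equiv -1 \pmod 4$, we have $3^n \equiv (-1)^n \equiv -1 \pmod 4$ when $n$ is odd, so $3^n - 1 \equiv 2 \pmod 4$ and $v(3^n-1)=1$.

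For the inductive step, suppose $v(n) \geq 1$ and write $n = 2m$ with $v(m) = v(n) - 1$. Factor
\[
3^n - 1 \;=\; (3^m - 1)(3^m + 1),
\]
so $v(3^n - 1) = v(3^m - 1) + v(3^m + 1)$. To evaluate $v(3^m+1)$, I split on the parity of $m$. If $m$ is odd, a direct check modulo $8$ gives $3^m \equiv 3 \pmod 8$, so $v(3^m+1) = 2$, while $v(3^m - 1) = 1$ by the base case, giving $v(3^n - 1) = 3 = 2 + v(n)$ since then $v(n) = 1$. If $m$ is even, then $3^m \equiv 1 \pmod 4$, so $v(3^m + 1) = 1$, while by the inductive hypothesis $v(3^m - 1) = 2 + v(m) = 1 + v(n)$, yielding $v(3^n-1) = 2 + v(n)$ as required.

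There is no real obstacle here; the only mild subtlety is the asymmetry at $p=2$ that forces the base case $n$ odd to behave differently from the even case, which is exactly what is reflected in the piecewise formula. Once the modulo $4$ and modulo $8$ observations about powers of $3$ are in hand, the induction closes immediately.
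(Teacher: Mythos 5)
Your proof is correct and uses essentially the same idea as the paper: the paper writes out the full factorization $3^n - 1 = \bigl(1 + 3^{2^a} + \cdots + (3^{2^a})^{b-1}\bigr)(3-1)\prod_{i=0}^{a-1}(1 + 3^{2^i})$ in one step and reads off the valuations, whereas you unroll the same difference-of-squares decomposition inductively via $3^{2m}-1 = (3^m-1)(3^m+1)$. Both arguments hinge on the same modulo-$8$ computation of $v(3^k+1)$ according to the parity of $k$, so the content is identical, only the bookkeeping differs.
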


\begin{proof}
Let $n = 2^a \cdot b$, where $b$ is an odd number, so
$v(n) = a$.
Then
\[
3^n - 1 = 
\left(1 + 3^{2^a} + (3^{2^a})^2 + \cdots + (3^{2^a})^{b-1}\right)
(3 - 1) \prod_{i=0}^{a-1} (1 + 3^{2^i}).
\]
The first factor is odd, so it does not contribute to the $2$-adic
valuation.  
The factor $( 1 + 3^{2^i})$ has valuation 1 if $i > 0$,
and it has valuation 2 if $i = 0$.
\end{proof}

\begin{proposition}
\label{prop:L-C-E1}
The chart in Figure \ref{fig:L-C-E1} depicts 
the effective $E_1$-page of $L^\C$.
\end{proposition}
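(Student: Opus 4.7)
The plan is to use the long exact sequence
\[
\cdots \to E_1^{s+1,f-1,w}(\ko^\C) \ttoo{\psi^3-1} E_1^{s+1,f-1,w}(\ko^\C) \ttoo{\partial} E_1^{s,f,w}(L^\C) \to E_1^{s,f,w}(\ko^\C) \ttoo{\psi^3-1} E_1^{s,f,w}(\ko^\C) \to \cdots
\]
associated to the fiber sequence of slices $s_*(L^\C) \to s_*(\ko^\C) \ttoo{\psi^3-1} s_*(\ko^\C)$, so that $E_1^{s,f,w}(L^\C)$ is an extension of a piece of $\ker(\psi^3-1)$ by a piece of $\coker(\psi^3-1)$ on $E_1(\ko^\C)$. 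Combining this with Proposition \ref{prop:ko-C-E1} and Lemma \ref{lem:psi3-E1-C-values} reduces the problem to computing these kernels and cokernels and verifying that no nontrivial extensions occur.

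To compute them, I would decompose $E_1(\ko^\C) = \Z[\tau, h_1, v_1^2]/2h_1$ as the direct sum of the torsion-free part $T$ spanned by the monomials $\tau^a v_1^{2k}$ and the $h_1$-multiple part $M = h_1 \cdot \F_2[\tau, h_1, v_1^2]$, both of which are preserved by $\psi^3$. Since $\psi^3(v_1^2) = 9v_1^2 \equiv v_1^2 \pmod 2$, the map $\psi^3-1$ vanishes on $M$, contributing a copy of $M$ to both the kernel and the cokernel. On $T$, the map sends $\tau^a v_1^{2k}$ to $(9^k-1)\tau^a v_1^{2k}$: the $k=0$ summand gives $\Z[\tau]\{1\}$ in the kernel, and for $k \geq 1$ the $2$-complete cokernel is $\Z/2^{3+v(k)} \cdot \tau^a v_1^{2k}$ by Lemma \ref{lem:valuation} applied to $n = 2k$.

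Finally, I would rule out hidden extensions by comparing tridegrees. After the connecting-map shift $(s', f', w') \mapsto (s'-1, f'+1, w')$ induced by $\partial$, the cokernel contributions land at stems $s \equiv 3 \pmod 4$ in filtration $f = 1$ (from $T$) and at $s \equiv f - 2 \pmod 4$ in filtration $f \geq 2$ (from $M$), whereas the kernel contributions sit at $s = 0$ in filtration $0$ and at $s \equiv f \pmod 4$ in filtration $f \geq 1$. These never coincide, so each short exact sequence splits and assembling the pieces yields the picture of Figure \ref{fig:L-C-E1}. The main obstacle, such as it is, is simply the tridegree and valuation bookkeeping; the computation itself reduces to Lemma \ref{lem:valuation} once the fiber sequence is in place.
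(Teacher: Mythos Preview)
Your proof is correct and follows essentially the same approach as the paper: both compute the kernel and cokernel of $\psi^3-1$ on $E_1(\ko^\C)$ using the values in Table~\ref{tab:ko-C-E1-generators} and Lemma~\ref{lem:valuation}, then assemble $E_1(L^\C)$ from the resulting short exact sequence. You are slightly more explicit than the paper in checking that the kernel and cokernel pieces occupy disjoint tridegrees (so no extension problem arises), but this is just added detail rather than a different argument.
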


\begin{proof}
The long exact sequence
\[
\cdots \tto E_1(L^\C) \tto E_1(\ko^\C) \ttoo{\psi^3-1} E_1(\ko^\C) \tto \cdots
\]
induces a short exact sequence
\[
0 \tto \Sigma^{-1} C \tto E_1(L^\C) \tto K \tto 0,
\]
where $C$ and $K$ are the cokernel and kernel of 
$E_1(\ko^\C) \ttoo{\psi^3-1} E_1(\ko^\C)$ respectively.
The cokernel and kernel can be computed directly from
the information given in Table \ref{tab:ko-C-E1-generators}
(see also Lemma \ref{lem:psi3-E1-C-values}).

The kernel is additively generated by all multiples of $h_1$ in
$E_1(\ko^\C)$, together with the elements $\tau^k$ for $k \geq 0$.

The cokernel $C$ is nearly the same as $E_1(\ko^\C)$ itself.
We must impose the relations $(3^{2k}-1) v_1^{2k} = 0$ for all $k > 0$.
Lemma \ref{lem:valuation} says that
$3^{2k}-1$ equals $2^{v(2k)+2} \cdot u$, where $u$ is an odd number,
i.e., a unit in our $2$-adic context.
Therefore, the relation
$(3^{2k}-1) v_1^{2k} = 0$ is equivalent to the relation
$2^{v(2k)+2} v_1^{2k} = 0$.
\end{proof}

Table \ref{tab:L-C-E1-generators} lists some elements
of the effective $E_1$-page of $L^\C$.  In fact, these elements
are multiplicative generators for $E_1(L^\C)$.
By inspection, all elements of $E_1(L^\C)$ are of the form
$\tau^a h_1^b x$, for some $x$ in the table.

We use the same notation for elements of $E_1(L^\C)$ and their
images in $E_1(\ko^\C)$.
On the other hand,
we define the elements $\iota x$ of $E_1(L^\C)$ by the property that
they are the image of $x$ under the map 
$\iota: \Sigma^{-1} E_1(\ko) \to E_1(L)$.
For example, the element $1$ of $E_1(\ko)$ maps to $\iota$.

\begin{longtable}{lll}
\caption{Multiplicative generators for $E_1(L^\C)$: $k \geq 0$
\label{tab:L-C-E1-generators} 
} \\
\hline
coweight & $(s, f, w)$ & generator \\
\hline \endfirsthead
\caption[]{Multiplicative generators for $E_1(L^\C)$: $k \geq 0$} \\
\hline
coweight & $(s, f, w)$ & generator  \\
\hline \endhead
\hline \endfoot
$1$ & $(0,0,-1)$ & $\tau$ \\
$2k$ & $(4k+1,1,2k+1)$ & $h_1 v_1^{2k}$ \\
$2k-1$ & $(4k-1,1,2k)$ & $\iota v_1^{2k}$ \\
\end{longtable}

\begin{remark}
Our choice of notation for elements of $E_1(L^\C)$ is helpful for the
particular analysis at hand.  The generators of $E_1(L^\C)$ also have
traditional names from the perspective of the Adams--Novikov spectral
sequence.  Namely, $h_1 v_1^{2k}$ and $\iota v_1^{2k}$ correspond to
$\alpha_{2k+1}$ and $\alpha_{2k/v(8k)}$ respectively.  
However, the $\alpha$-family
perspective is not so helpful for us.
\end{remark}

\subsection{The effective spectral sequence of $L^\C[\eta^{-1}]$}
\label{subsctn:L-C-eta}
Next, we describe the effective spectral sequence of
$L^\C[\eta^{-1}]$.

In the $\eta$-periodic context, the element $h_1$ is a unit.
Therefore, powers of $h_1$ are inconsequential for computational
purposes.  Consequently, we have removed these powers from all
$\eta$-periodic formulas.  The appropriate powers of $h_1$
can be easily reconstructed from the degrees of elements 
(although this reconstruction is typically not necessary).

\begin{proposition}
\label{prop:L-C-eta-E1}
The effective $E_1$-page for $L^\C[\eta^{-1}]$ is given by
\[
E_1(L^\C[\eta^{-1}]) = \F_2[h_1^{\pm 1}, \tau, v_1^2, \iota]/ \iota^2.
\]
\end{proposition}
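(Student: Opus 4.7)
The plan is to localize the short exact sequence
\[
0 \tto \Sigma^{-1} C \tto E_1(L^\C) \tto K \tto 0
\]
from the proof of Proposition \ref{prop:L-C-E1} by inverting $h_1$. Since $h_1$ represents $\eta$ and the slice functor commutes with the filtered colimit defining $\eta$-periodization, the middle term becomes $E_1(L^\C[\eta^{-1}])$. Here $K$ and $C$ denote the kernel and cokernel of $\psi^3 - 1$ acting on $E_1(\ko^\C)$.

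First I would invert $h_1$ in $E_1(\ko^\C) = \Z[\tau, h_1, v_1^2]/(2h_1)$. The relation $2h_1 = 0$ immediately forces $2 = 0$, giving
\[
E_1(\ko^\C)[h_1^{-1}] = \F_2[\tau, h_1^{\pm 1}, v_1^2].
\]
Next, I would compute the effect of $\psi^3 - 1$ on this quotient. By Table \ref{tab:ko-C-E1-generators} and Lemma \ref{lem:psi3-E1-C-values}, $\psi^3$ fixes $\tau$ and $h_1$ and sends $v_1^{2k}$ to $9^k v_1^{2k}$. Since $9^k - 1$ is divisible by $8$ and we are working mod $2$, this makes $\psi^3 - 1$ the zero map on $E_1(\ko^\C)[h_1^{-1}]$. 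Hence both $K[h_1^{-1}]$ and $C[h_1^{-1}]$ coincide with the full ring $\F_2[\tau, h_1^{\pm 1}, v_1^2]$, and the localized sequence
\[
0 \tto \Sigma^{-1}\F_2[\tau, h_1^{\pm 1}, v_1^2] \ttoo{\iota} E_1(L^\C[\eta^{-1}]) \tto \F_2[\tau, h_1^{\pm 1}, v_1^2] \tto 0
\]
exhibits $E_1(L^\C[\eta^{-1}])$ additively as a free $\F_2[\tau, h_1^{\pm 1}, v_1^2]$-module on the classes $1$ and $\iota$, matching the proposed description.

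It remains to verify the one nontrivial multiplicative relation, $\iota^2 = 0$. The class $\iota$ has tridegree $(s, f, w) = (-1, 1, 0)$, so $\iota^2$ lies in tridegree $(-2, 2, 0)$. A direct degree check against the additive basis shows that this group is zero: for a monomial $h_1^a \tau^b v_1^{2c}$, matching tridegrees forces $a = 2$ and $c = -1$, while an $\iota$-multiple $\iota \cdot h_1^a \tau^b v_1^{2c}$ forces $4c = -2$. Both are impossible since $v_1^2$ is not inverted. Thus $\iota^2 = 0$ by sparsity, completing the ring identification. The main obstacle is just this bookkeeping; the real substance of the argument is the mod-$2$ vanishing of $\psi^3 - 1$ after $\eta$-inversion, which collapses the long exact sequence in an especially clean way.
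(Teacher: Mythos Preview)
Your proof is correct and follows essentially the same approach as the paper: both invert $h_1$ to reduce to $\F_2$ coefficients, observe that $\psi^3-1$ then vanishes (you via $9^k\equiv 1\pmod 2$, the paper more tersely), deduce the additive splitting from the resulting degenerate long exact sequence, and dispose of $\iota^2$ by a degree check. Your version is slightly more explicit in justifying the vanishing of $\psi^3-1$ and in carrying out the sparsity argument for $\iota^2=0$, but the structure of the argument is the same.
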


\begin{proof}
The functors $s_*$ commute with homotopy colimits \cite[Corollary 4.6]{Spitzweck2008RelationsBS}.
Therefore, we can just invert $h_1$ in $E_1(\ko^\C)$ to obtain
\[
E_1(\ko^\C[\eta^{-1}]) = \F_2[h_1^{\pm 1}, \tau, v_1^2].
\]
See Proposition \ref{prop:ko-C-E1} (and Figure \ref{fig:ko-C-E1})
for the description of $E_1(\ko^\C)$.

The map $E_1(\ko^\C[\eta^{-1}]) \ttoo{\psi^3-1} E_1(\ko^\C[\eta^{-1})$
is trivial because $(\psi^3 -1)(h_1) = 0$, as shown in
Table \ref{tab:ko-C-E1-generators} 
(see also Lemma \ref{lem:psi3-E1-C-values}).
Therefore, the long exact sequence
\[
\cdots \tto E_1(L^\C[\eta^{-1}) \tto E_1(\ko^\C[\eta^{-1}]) \ttoo{\psi^3-1} E_1(\ko^\C[\eta^{-1}]) \tto \cdots
\]
implies that
$E_1(L^\C[\eta^{-1}])$ splits as 
\[
E_1(\ko^\C[\eta^{-1}]) \oplus \Sigma^{-1} E_1(\ko^\C[\eta^{-1}]).
\]
This establishes
the additive structure of $E_1(L[\eta^{-1}])$, as well
as most of the multiplicative structure.

The relation $\iota^2 = 0$ is immediate because there are no possible
non-zero values for $\iota^2$.
\end{proof}

\begin{proposition}
\label{prop:L-C-eta-diff}
In the effective spectral sequence for $L^\C[\eta^{-1}]$,
we have $d_1(v_1^2) = \tau$.  The effective differentials
are zero on all other multiplicative generators on all pages.
\end{proposition}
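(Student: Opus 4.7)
The plan is to apply naturality of the effective spectral sequence under the projection map $L^\C[\eta^{-1}] \to \ko^\C[\eta^{-1}]$ and the extended-fiber map $\Sigma^{-1,0} \ko^\C[\eta^{-1}] \to L^\C[\eta^{-1}]$ obtained by rotating the defining fiber sequence, combined with a short tri-degree argument to eliminate corrections living in the $\iota$-image.

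First, I would dispatch the four $E_1$-level multiplicative generators. The elements $h_1$ and $\tau$ are in the image of the unit map $S \to L^\C[\eta^{-1}]$ (detecting $\eta \in \pi_{1,1}(S)$ and the motivic $\tau \in \pi_{0,-1}(S)$ respectively), so they are permanent cycles on all pages. The element $\iota$ is, by construction, the image of the unit $1 \in E_1(\Sigma^{-1,0}\ko^\C[\eta^{-1}])$ under the rotated-fiber map; since $1$ is a permanent cycle, so is $\iota$. For $v_1^2$, the differential $d_1(v_1^2) = \tau h_1^3$ in $\ko^\C$ (Proposition \ref{prop:ko-C-diff}) transfers to $L^\C[\eta^{-1}]$ via the projection onto the first summand in Proposition \ref{prop:L-C-eta-E1}. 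A direct tri-degree check shows that the target degree $(3,3,2)$ contains no element in the $\iota$-summand (the relevant linear system in $(s,f,w)$ forces a half-integer exponent on $v_1^2$), so no correction is possible, and the formula reduces to $d_1(v_1^2) = \tau$ in our $\eta$-periodic shorthand.

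For the higher pages, I would take $d_1$-homology. Using the derivation property of $d_1$, one finds $d_1(v_1^{2c}) = c \cdot \tau h_1^3 v_1^{2(c-1)}$, which in characteristic $2$ vanishes exactly when $c$ is even, so $E_2(L^\C[\eta^{-1}]) \cong \F_2[h_1^{\pm 1}, v_1^4, \iota]/\iota^2$, with $\tau$ killed as a boundary. The only new multiplicative generator is $v_1^4$, which is a permanent cycle in $\ko^\C[\eta^{-1}]$; a parallel degree count rules out any $\iota$-correction in $E_r(L^\C[\eta^{-1}])$ for $r \geq 2$. Combined with the analogous vanishing on $h_1$ and $\iota$, this yields $E_2 = E_\infty$.

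The main obstacle is the degree bookkeeping required to exclude $\iota$-corrections; each amounts to solving a small linear Diophantine system in $(s,f,w)$, but must be carried out for every candidate differential. A subsidiary technicality is the derivation property of $d_1$, which I would take for granted from the multiplicative structure on $E_r(L^\C[\eta^{-1}])$ recorded in Proposition \ref{prop:L-C-eta-E1}.
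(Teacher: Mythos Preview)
Your proof is correct and follows essentially the same strategy as the paper, namely naturality of the effective spectral sequence to import $d_1(v_1^2)=\tau h_1^3$ from $\ko^\C$. The paper routes through the span $\ko^\C \leftarrow L^\C \to L^\C[\eta^{-1}]$ while you use the direct projection $L^\C[\eta^{-1}]\to\ko^\C[\eta^{-1}]$ together with the rotated-fiber map; these are equivalent via the obvious commuting square. Your write-up is in fact more complete than the paper's: you explicitly carry out the tri-degree check excluding an $\iota$-correction, and you explicitly compute $E_2$ and verify that $v_1^4$ supports no higher differential, whereas the paper leaves both points implicit.
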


\begin{proof}
The value of $d_1(v_1^2)$ in $E_1(L^\C[\eta^{-1}])$ follows
by comparison of effective spectral sequences along the maps
$L^\C \to L^\C[\eta^{-1}]$ and $L^\C \to \ko^\C$.
Table \ref{tab:ko-C-E1-generators} 
(see also Proposition \ref{prop:ko-C-diff})
gives the value of
$d_1(v_1^2)$ in $E_1(\ko^\C)$ .
\end{proof}

\begin{remark}
\label{rmk:L-C-eta-E1}
The effective spectral sequence for $L^\C[\eta^{-1}]$
is very close to the effective spectral sequence for the
$\eta$-periodic sphere $S^\C[\eta^{-1}]$.
The effective spectral sequence for $S^\C[\eta^{-1}]$ is the same
(up to reindexing) as the motivic Adams--Novikov spectral sequence
for $S^\C[\eta^{-1}]$.
This motivic Adams--Novikov spectral sequence is analyzed in
\cite{AM2017}.  The element $\iota$ is not present in 
$E_1(S^\C[\eta^{-1}])$, but its multiples
$\iota (v_1^2)^k$ are present.
\end{remark}

\subsection{Effective differentials for $L^\C$}

\begin{proposition}
\label{prop:L-C-diff}
Table \ref{tab:L-C-d1} gives the values of the effective $d_1$ differentials
on the multiplicative generators of $E_1(L^\C)$.
There are no higher differentials in the effective spectral
sequence for $L^\C$.
\end{proposition}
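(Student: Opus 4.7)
The strategy uses three ingredients: the Leibniz rule applied to multiplicative generators, the ring map $L^\C \to \ko^\C$ (which realizes the ``kernel-type'' classes $h_1 v_1^{2k}$ and $\tau$ isomorphically in their tridegrees inside $E_1(\ko^\C)$), and the connecting map $\partial: E_r(\ko^\C) \to E_r(L^\C)$ arising from the long exact sequence of the fiber sequence $L^\C \to \ko^\C \xrightarrow{\psi^3-1} \ko^\C$. The map $\partial$ realizes each generator $\iota v_1^{2k}$ as $\partial(v_1^{2k})$, is $E_*(L^\C)$-linear, and commutes with $d_r$ up to a sign. Together these reduce every $d_1$-computation on $E_1(L^\C)$ to the known $d_1$ on $E_1(\ko^\C)$ from Table \ref{tab:ko-C-E1-generators}.

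I would treat each type of generator separately. The class $\tau$ is a permanent cycle because $d_r(\tau)=0$ already in $E_r(\ko^\C)$ for all $r$. For $h_1 v_1^{2k}$, the Leibniz rule with $d_1(h_1)=0$ gives
\[
d_1(h_1 v_1^{2k}) \;=\; h_1 \cdot d_1(v_1^{2k}) \;=\; k\,\tau h_1^{4}\, v_1^{2k-2},
\]
which vanishes for $k$ even (since $2h_1=0$) and equals $\tau h_1^{4} v_1^{2k-2}$ for $k$ odd. For $\iota v_1^{2k}$, using compatibility of $\partial$ with $d_1$ together with $E_*(L^\C)$-linearity,
\[
d_1(\iota v_1^{2k}) \;=\; \pm\, \partial\bigl(d_1(v_1^{2k})\bigr) \;=\; \pm\, k\,\tau h_1^{3}\, \iota v_1^{2k-2},
\]
which is $\tau h_1^{3} \iota v_1^{2k-2}$ for $k$ odd (the sign is immaterial since the target is $2$-torsion) and vanishes for $k$ even.

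For the absence of higher differentials, I would appeal to Proposition \ref{prop:L-C-eta-diff}: after $\eta$-inversion, no differentials beyond $d_1$ occur. Hence any hypothetical $d_r(x)$ for $r\geq 2$ in $E_r(L^\C)$ must vanish in $E_r(L^\C[\eta^{-1}])$, forcing $d_r(x)$ to be $h_1$-torsion. The main obstacle will be a tridegree-by-tridegree verification that no surviving class on $E_2(L^\C)$ sits in the right relative tridegree to some $h_1$-torsion class to support a nontrivial higher differential. Given the sparsity of the effective $E_2$-page obtained from Figure \ref{fig:L-C-E1} by cancelling the $d_1$-sources and $d_1$-targets computed above, I expect this verification to be a routine case check rather than a deep obstacle.
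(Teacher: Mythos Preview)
Your approach is correct and genuinely different from the paper's. For the $d_1$ differentials, the paper deduces everything from the $\eta$-periodic spectral sequence of Proposition~\ref{prop:L-C-eta-diff}, pulling back the single relation $d_1(v_1^2)=\tau$ along $L^\C \to L^\C[\eta^{-1}]$. You instead work directly with the fiber sequence, using $L^\C \to \ko^\C$ for the kernel-type generators and the boundary $\partial$ for the $\iota$-type generators; this is arguably more elementary, since it only requires the $d_1$ on $E_1(\ko^\C)$ from Table~\ref{tab:ko-C-E1-generators} and avoids the detour through the $\eta$-periodic computation. The paper's route has the advantage of parallelism with the later $\R$-motivic argument, where $\eta$-periodic input is essential. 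One small point to tighten: when you compute $d_1(h_1 v_1^{2k})$ by mapping to $E_1(\ko^\C)$, you should note that the map $E_1(L^\C)\to E_1(\ko^\C)$ is injective not only in the source tridegree but also in the \emph{target} tridegree $(4k,4,2k+1)$; otherwise the answer is only determined modulo the image of $\partial$. A quick degree check shows no $\iota$-type class lives there, so this is harmless.

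For higher differentials, the paper simply says ``for degree reasons,'' and indeed a direct check shows that any $d_r$ with $r\geq 2$ would force a negative $\tau$-exponent on the source. Your route---reducing via $\eta$-inversion to $h_1$-torsion targets, then checking none exist in the right position---also works: all $h_1$-torsion in $E_2(L^\C)$ lies in Adams--Novikov filtration at most $4$, while any $d_r$ target for $r\geq 2$ has filtration at least $5$. So your ``routine case check'' is genuinely routine, but the paper's bare degree argument is shorter.
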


\begin{proof}
All of these differentials follow immediately from the 
effective $d_1$ differentials for $L^\C[\eta^{-1}]$,
which are determined by Proposition \ref{prop:L-C-eta-diff}.

For degree reasons, there are no possible higher differentials.
\end{proof}

\begin{longtable}{llll}
\caption{Effective $d_1$ differentials for $L^\C$: $k \geq 0$
\label{tab:L-C-d1}
} \\
\hline
coweight & $(s, f, w)$ & $x$ & $d_1(x)$ \\
\hline \endfirsthead
\caption[]{Effective $d_1$ differentials for $L^\C$: $k \geq 0$} \\
\hline
coweight & $(s, f, w)$ & $x$ & $d_1(x)$ \\
\hline \endhead
\hline \endfoot
$1$ & $(0,0,-1)$ & $\tau$ & \\
$4k$ & $(8k+1, 1, 4k+1)$ & $h_1 v_1^{4k}$ & \\
$4k+2$ & $(8k+5, 1, 4k+3)$ & $h_1 v_1^{4k+2}$ & $\tau h_1^3 \cdot h_1 v_1^{4k}$ \\
$4k-1$ & $(8k-1, 1, 4k)$ & $\iota v_1^{4k}$ & \\
$4k+1$ & $(8k+3, 1, 4k+2)$ & $\iota v_1^{4k+2}$ & $\tau h_1^3 \cdot \iota v_1^{4k}$ 
\end{longtable}

\begin{theorem}
The $E_\infty$-page of the effective spectral sequence for $L^\C$ is
depicted in Figure \ref{fig:L-C-Einfty}.
\end{theorem}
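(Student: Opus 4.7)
The plan is to run the effective spectral sequence to its $E_\infty$-page by combining the $E_1$-description of Proposition \ref{prop:L-C-E1} (with multiplicative generators in Table \ref{tab:L-C-E1-generators}) with the $d_1$-values in Table \ref{tab:L-C-d1} (established in Proposition \ref{prop:L-C-diff}), and then showing that no further pages contribute. Since Proposition \ref{prop:L-C-diff} already asserts the absence of higher differentials for degree reasons, the essential task reduces to computing $E_2 = E_\infty$ as the cohomology of $(E_1(L^\C), d_1)$.

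First I would record what $d_1$ does on an arbitrary additive generator. Every element of $E_1(L^\C)$ is of the form $\tau^a h_1^b \cdot x$, where $x$ is one of $1$, $h_1 v_1^{2k}$, or $\iota v_1^{2k}$. Since $d_1(\tau) = 0$ and $d_1(h_1) = 0$, the Leibniz rule reduces the computation to the values in Table \ref{tab:L-C-d1}. Explicitly, $d_1$ vanishes on $\tau^a h_1^b$, on $\tau^a h_1^{b+1} v_1^{4k}$, and on $\tau^a h_1^b \cdot \iota v_1^{4k}$, and sends
\[
\tau^a h_1^b \cdot h_1 v_1^{4k+2} \longmapsto \tau^{a+1} h_1^{b+4} v_1^{4k}, \qquad \tau^a h_1^b \cdot \iota v_1^{4k+2} \longmapsto \tau^{a+1} h_1^{b+3} \cdot \iota v_1^{4k}.
\]

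Next I would split the computation by the $v_1^2$-adic grading, noting that $d_1$ lowers the $v_1^2$-adic filtration by one. The surviving $E_2$-classes therefore live in kernels of the maps above and are quotiented by their images. Concretely, in the $h_1$-tower over $v_1^{4k}$ I expect the classes $\tau^a v_1^{4k}$ and $\tau^a h_1^i v_1^{4k}$ for $1 \leq i \leq 3$ to survive (the higher $h_1$-powers being hit after applying appropriate $\tau$-factors), together with analogous patterns over $\iota v_1^{4k}$. The elements $\tau^a$ and the bottom generator $1$ survive trivially, and the elements $\tau^a h_1^b$ with small $b$ survive as well. The main bookkeeping challenge will be to confirm that the resulting picture matches exactly the shape depicted in Figure \ref{fig:L-C-Einfty}, in particular the $h_1$-truncation heights over each $v_1^{4k}$ and $\iota v_1^{4k}$ and the pattern of $\tau$-torsion introduced by the differential.

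Finally I would double-check that nothing survives outside the ranges shown in the figure by verifying that the kernel of $d_1$ on each bi-generator column is spanned by precisely the classes depicted, and that the image exactly accounts for the rest. Since the entire spectral sequence collapses at $E_2$ by Proposition \ref{prop:L-C-diff}, this completes the identification $E_\infty(L^\C)$ with the chart. The hard part is really just the careful accounting of the $h_1$- and $\tau$-truncation in the two families $\{h_1 v_1^{2k}\}$ and $\{\iota v_1^{2k}\}$; the $\eta$-periodic computation of Section \ref{subsctn:L-C-eta} provides a useful consistency check because inverting $h_1$ must recover the $E_\infty$-page of $L^\C[\eta^{-1}]$, whose structure is already pinned down by Propositions \ref{prop:L-C-eta-E1} and \ref{prop:L-C-eta-diff}.
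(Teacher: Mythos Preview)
Your approach is correct and is exactly what the paper does: since Proposition \ref{prop:L-C-diff} already establishes all $d_1$ values and rules out higher differentials, the $E_\infty$-page is simply the homology of $(E_1(L^\C),d_1)$, and the paper's proof says no more than this. Two small bookkeeping slips in your sketch to fix when you carry it out: for $k\geq 1$ there are no classes ``$\tau^a v_1^{4k}$'' in $E_1(L^\C)$ (the kernel part only contains $h_1$-multiples, so the tower starts at $h_1 v_1^{4k}$), and the $\tau$-free classes $h_1^d v_1^{4k}$ with $d\geq 4$ do survive, since $d_1$ only hits their $\tau$-multiples.
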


\begin{proof}
Because there are no higher effective differentials for $L^\C$,
we obtain the effective $E_\infty$-page immediately from the
effective $d_1$ differentials in Table \ref{tab:L-C-d1}
(see also Proposition \ref{prop:L-C-diff}).
\end{proof}

\subsection{Hidden extensions in $E_\infty(L^\C)$}

\begin{proposition}
\label{prop:L-C-hidden}
In the effective spectral sequence for $L^\C$, there are hidden
$\h$ extensions from $\iota 4 v_1^{4k+2}$ to 
$\tau h_1^2 \cdot h_1 v_1^{4k}$ for all $k \geq 0$.
\end{proposition}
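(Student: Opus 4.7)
The plan is to compute $\pi_{8k+3,4k+2}(L^\C)$ directly from the long exact sequence associated to the defining fiber sequence of $L^\C$, and to compare the answer with the associated graded on $E_\infty(L^\C)$. Since the resulting homotopy group will turn out to be strictly more cyclic than its associated graded, a hidden extension is forced.

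First I would compute $\pi_{8k+4,4k+2}(\ko^\C)\cong\Z_2$, generated by (a lift of) $2v_1^{4k+2}$ using Theorem \ref{thm:ko-C-Einfty}; on this generator, $\psi^3-1$ acts as multiplication by $9^{2k+1}-1$, which by Lemma \ref{lem:valuation} has $2$-adic valuation exactly $3$. Likewise $\pi_{8k+3,4k+2}(\ko^\C)=0$, because the sole $E_1$-candidate $\tau h_1^3 v_1^{4k}$ is annihilated by the relation $\tau h_1^3=0$ in $E_\infty(\ko^\C)$ (Theorem \ref{thm:ko-C-Einfty}). The long exact sequence then collapses to $\pi_{8k+3,4k+2}(L^\C)\cong\Z/8$, with generator lifting $\iota(2v_1^{4k+2})$.

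On $E_\infty(L^\C)$ at $(8k+3,*,4k+2)$, the filtration $1$ piece is generated by $2\iota v_1^{4k+2}$ and contributes a $\Z/4$ (since $\iota v_1^{4k+2}$ is killed by $d_1$ in Proposition \ref{prop:L-C-diff} and $8v_1^{4k+2}=0$ in the cokernel $C$ by Lemma \ref{lem:valuation}); the filtration $3$ piece is $\tau h_1^3 v_1^{4k}=\tau h_1^2\cdot h_1 v_1^{4k}$, of order $2$. These account for exactly $8$ elements, matching $|\pi_{8k+3,4k+2}(L^\C)|$, so the associated graded must be $\Z/4\oplus\Z/2$. Since $\Z/8\not\cong\Z/4\oplus\Z/2$, a nontrivial hidden extension is forced, and the only option compatible with the filtration is
\[
2\cdot(4\iota v_1^{4k+2})\;=\;\tau h_1^2\cdot h_1 v_1^{4k},
\]
which is the claimed extension.

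The main subtle point will be verifying that no further $E_\infty$ classes survive in filtration $\geq 5$ at this bidegree (for example, $\iota\tau^2 h_1^4 v_1^{4k}$ at filtration $5$ could threaten the count). Such candidates are killed by $d_1$-differentials originating from $\Sigma^{-1}C$-classes like $\iota\tau h_1 v_1^{4k+2}$, using $d_1(v_1^{4k+2})=\tau h_1^3 v_1^{4k}$ (up to an odd scalar absorbed by $2h_1=0$), so the filtration $1$ and $3$ pieces account for all $E_\infty$ contributions and the extension analysis is forced.
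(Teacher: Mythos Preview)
Your argument is correct and takes a genuinely different route from the paper's proof. The paper argues via the Toda bracket relation $\tau\eta^2 = \langle \h,\eta,\h\rangle$ in $\pi_{*,*}(S^\C)$: applying this to a class $\alpha$ detected by $h_1 v_1^{4k}$ (which satisfies $\h\alpha=0$) shows that the element detected by $\tau h_1^2\cdot h_1 v_1^{4k}$ is divisible by $\h$, hence must receive a hidden $\h$ extension, and the only possible source is $\iota 4 v_1^{4k+2}$. Your approach instead computes $\pi_{8k+3,4k+2}(L^\C)\cong\Z/8$ directly from the defining long exact sequence and forces the extension by comparing orders with the associated graded.

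Your method is more elementary in that it avoids Toda brackets entirely, at the cost of more bookkeeping on the $E_\infty$-page (your final paragraph correctly handles this, though the phrase ``sole $E_1$-candidate'' for $\pi_{8k+3,4k+2}(\ko^\C)$ is slightly loose: there are higher-filtration $E_1$ classes such as $\tau^3 h_1^7 v_1^{4k-2}$, but they all die or are multiples of $\tau h_1^3$ on $E_\infty$, so the conclusion stands). The paper's Toda bracket argument, by contrast, is shorter and pinpoints the target of the extension first rather than the source; it also generalizes more readily to settings where one cannot read off the full homotopy group by an independent method.

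One small point worth making explicit: you pass silently between the hidden $\h$ extension asserted in the statement and the $2$-extension you deduce. This is valid because $\h=2$ in the $\C$-motivic context (since $\rho=0$ and $2=\rho\eta+\h$), but it deserves a sentence.
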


\begin{proof}
Recall that $\tau \eta^2 = \langle \h, \eta, \h \rangle$
in the homotopy of the $\C$-motivic sphere \cite[Table 7.23]{Isaksen19}.
If $\alpha$ is a homotopy element of $L^\C$ such that $\h \alpha$ is zero,
then
\[
\alpha \cdot \tau \eta^2 = \alpha \langle \h, \eta, \h \rangle =
\langle \alpha, \h, \eta \rangle \h.
\]

In particular, let $\alpha$ be detected by $h_1 v_1^{4k}$.
Then $\tau h_1^2 \cdot h_1 v_1^{4k}$ detects a homotopy element
that is divisible by $\h$, so $\tau h_1^2 \cdot h_1 v_1^{4k}$
must be the target of a hidden $\h$ extension.  There is only 
one possible source for this extension.
\end{proof}

\section{The effective spectral sequence for $\ko$}
\label{sctn:ko-eff}

We now study the effective spectral sequence for $\R$-motivic $\ko$.

\begin{proposition}
\label{prop:ko-E1}
The effective spectral sequence for $\ko$ takes the form
\[
E_1(\ko) = \frac{\Z[\rho, \tau^2, h_1, \tau h_1, v_1^2]}
{2 \rho, 2 h_1, 2 \cdot \tau h_1, 
(\tau h_1)^2 = \tau^2 \cdot h_1^2 + \rho^2 \cdot v_1^2}
\]
\end{proposition}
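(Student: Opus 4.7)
The plan is to apply Theorem \ref{thm:ko-slices}, which gives $s_*(\ko) = H\Z[h_1, v_1^2]/(2h_1)$, and to then compute $E_1(\ko) = \pi_{*,*}(s_*(\ko))$ by reading off the $\R$-motivic homotopy of each slice summand together with the multiplicative structure inherited from $s_*(\ko)$.

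First I would enumerate the slice summands: each monomial $h_1^a v_1^{2b}$ with $a \geq 1$ contributes a shifted copy of $H\F_2$, while each pure power $v_1^{2b}$ contributes a shifted copy of $H\Z$. Using the standard $\R$-motivic coefficient rings $\pi_{*,*}(H\F_2) = \F_2[\tau, \rho]$ and $\pi_{*,*}(H\Z) = \Z[\tau^2, \rho]/(2\rho)$ in the relevant range, with $|\tau| = (0,-1)$ and $|\rho| = (-1,-1)$, I would read off the generators. The classes $\rho$ and $\tau^2$ arise from $\pi_{*,*}(s_0(\ko)) = \pi_{*,*}(H\Z)$; the class $v_1^2$ is the fundamental class of the $H\Z$-summand inside $s_2(\ko)$; and the classes $h_1$ and $\tau h_1$ come from the two weight-$1$ and weight-$0$ elements on the $\Sigma^{1,1}H\F_2$-summand inside $s_1(\ko)$. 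Note that $\tau$ itself is not a generator, since there is no $H\F_2$-summand in $s_0(\ko)$. The relations $2\rho = 0$, $2h_1 = 0$, and $2 \cdot \tau h_1 = 0$ are then immediate: the first from $\pi_{*,*}(H\Z)$, and the latter two from the slice-level relation $2 h_1 = 0$.

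I expect the main non-obvious step to be the multiplicative relation $(\tau h_1)^2 = \tau^2 \cdot h_1^2 + \rho^2 \cdot v_1^2$. This is delicate because the tridegree of $(\tau h_1)^2$ admits both an $H\F_2$-contribution (namely $\tau^2 \cdot h_1^2$, sitting in the $H\F_2$-summand of $s_2(\ko)$ coming from $h_1^2$) and an $H\Z$-contribution (namely $\rho^2 \cdot v_1^2$, sitting in the $H\Z$-summand of $s_2(\ko)$ coming from $v_1^2$), and the coefficient on the $H\Z$-summand is not visible from the additive slice decomposition alone. As noted in the discussion immediately following Theorem \ref{thm:ko-slices}, this identity is Equation (\ref{eq:ARO}) and follows from the general multiplicative formulas for slice products established in \cite{ARO2020}; I would simply cite their computation rather than rederive it.

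Finally, to conclude, I would verify tridegree-by-tridegree that no further relations are needed: the quotient ring displayed in the proposition has the correct additive size in each $(s,f,w)$, matching the direct sum of the $\pi_{*,*}$ of the individual slice summands enumerated above. This amounts to a routine but purely combinatorial check that, once the ARO relation is admitted, the displayed presentation is both necessary and sufficient.
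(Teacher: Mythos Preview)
Your proposal is correct and follows essentially the same approach as the paper: both compute the additive structure by applying $\pi_{*,*}$ to the slice summands of Theorem~\ref{thm:ko-slices} using the known $\R$-motivic coefficient rings of $H\Z$ and $H\F_2$, and both obtain the key relation $(\tau h_1)^2 = \tau^2 h_1^2 + \rho^2 v_1^2$ by citing \cite{ARO2020}. The paper additionally points to \cite{Kong20} for the translation between the ARO notation ($\eta^2 \too{\delta} \sqrt{\alpha}$) and the present formulation, but otherwise your argument matches.
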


\begin{proof}
The additive structure
follows from Theorem \ref{thm:ko-slices} by taking stable
homotopy groups.  
We need that the homotopy groups of $\R$-motivic $H\Z$ are
\[
H\Z_{*,*} = \Z[\tau^2, \rho]/ 2 \rho,
\]
and the homotopy groups of $\R$-motivic $H\F_2$ are
\[
(H\F_2)_{*,*} = \F_2[\tau, \rho].
\]
The multiplicative structure is mostly also immediate from
Theorem \ref{thm:ko-slices}.  As explained in
\cite{Kong20}, our formula for $(\tau h_1)^2$
is equivalent to the formula
$\eta^2 \too{\delta} \sqrt{\alpha}$ given in \cite[p.\ 1029]{ARO2020}.
\end{proof}

Table \ref{tab:ko-E1-generators}
lists the generators of $E_1(\ko)$.
Figure \ref{fig:ko-E1} depicts $E_1(\ko)$ in graphical form.

\begin{longtable}{llllll}
\caption{Multiplicative generators for $E_1(\ko)$
\label{tab:ko-E1-generators} 
} \\
\hline
coweight & $(s, f, w)$ & $x$ & $d_1(x)$ & $\psi^3(x)$ & image in $E_1(\ko[\eta^{-1}])$ \\
\hline \endfirsthead
\caption[]{Multiplicative generators for $E_1(\ko)$} \\
\hline
coweight & $(s, f, w)$ & $x$ & $d_1(x)$ & $\psi^3(x)$ & image in $E_1(\ko[\eta^{-1}])$ \\
\hline \endhead
\hline \endfoot
$0$ & $(-1,1,-1)$ & $\rho$ & & $\rho$ & $\rho$ \\
$0$ & $(1,1,1)$ & $h_1$ & & $h_1$ & $1$ \\
$1$ & $(1,1,0)$ & $\tau h_1$ & & $\tau h_1$ & $\tau$ \\
$2$ & $(0,0,-2)$ & $\tau^2$ & $\rho^2 \cdot \tau h_1$ & $\tau^2$ & $\tau^2 + \rho^2 \cdot v_1^2$ \\
$2$ & $(4,0,2)$ & $v_1^2$ & $\tau h_1 \cdot h_1^2$ & $9 v_1^2$ & $v_1^2$
\end{longtable}

\begin{proposition}
\label{prop:ko-diff}
Table \ref{tab:ko-E1-generators} gives the values of the 
effective $d_1$ differential on the multiplicative generators
of $E_1(\ko)$.
\end{proposition}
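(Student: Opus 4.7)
The plan is to first reduce the problem via a tridegree analysis that severely constrains the possible non-zero targets, then fix the remaining ambiguities using (i) the $\C$-motivic comparison of Section \ref{subsctn:R-C-compare}, (ii) the Leibniz rule applied to the ARO relation from Proposition \ref{prop:ko-E1}, and (iii) non-vanishing of a specific element of $\pi_{0,0}(\ko)$.

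The first step is purely combinatorial. Using the generator degrees from Table \ref{tab:ko-E1-generators} and the fact that $d_1$ sends $(s,f,w)$ to $(s-1, f+3, w)$, a routine enumeration of monomials $\rho^a h_1^b (\tau h_1)^c (\tau^2)^d (v_1^2)^e$ shows that the requisite target tridegrees for $d_1(\rho)$ and $d_1(h_1)$ contain no non-zero classes (the inequality $s - w = c + 2d + 2e \geq 0$ fails), so both of these differentials vanish for degree reasons. The same enumeration shows that each of $d_1(\tau h_1)$, $d_1(\tau^2)$, and $d_1(v_1^2)$ admits exactly one non-zero candidate in $\F_2$, namely $\rho^2 h_1^2$, $\rho^2 \cdot \tau h_1$, and $\tau h_1 \cdot h_1^2$, respectively.

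Next I would pin down $d_1(v_1^2) = \tau h_1 \cdot h_1^2$ by comparison with the $\C$-motivic effective spectral sequence: by Proposition \ref{prop:ko-C-diff}, $d_1(v_1^2) = \tau h_1^3 \neq 0$ in $E_1(\ko^\C)$, and since $\tau h_1 \cdot h_1^2$ maps to $\tau h_1^3$ under reduction modulo $\rho$, the $\R$-motivic differential cannot vanish. Then $d_1(\tau^2) = \rho^2 \cdot \tau h_1$ follows from applying the Leibniz rule to the relation $(\tau h_1)^2 = \tau^2 \cdot h_1^2 + \rho^2 \cdot v_1^2$: since $2 \cdot \tau h_1$, $2 h_1$, and $2 \rho$ all vanish, the Leibniz rule collapses to $0 = d_1(\tau^2) \cdot h_1^2 + \rho^2 \cdot d_1(v_1^2)$, and substituting the known value of $d_1(v_1^2)$ together with the injectivity of multiplication by $h_1^2$ on the two-element target gives the result.

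The most delicate case is $d_1(\tau h_1) = 0$, where I must rule out the non-trivial candidate $\rho^2 h_1^2$. For this I would use the identification $\pi_{0,0}(\ko) \cong GW(\R) \cong \Z\{1\} \oplus \Z\{\rho\eta\}$, together with the Grothendieck--Witt computation $(\rho\eta)^2 = \rho\eta \cdot (2 - \h) = 2 \rho\eta$, which is non-zero in $GW(\R)$. Since $\rho^2 h_1^2$ detects $(\rho\eta)^2$ in the effective spectral sequence, it must survive to $E_\infty$ and therefore cannot be the target of $d_1(\tau h_1)$, forcing $d_1(\tau h_1) = 0$. I expect this last step to be the main obstacle: the first two stages are essentially formal consequences of degree considerations, $\C$-motivic comparison, and the Leibniz rule, whereas ruling out $d_1(\tau h_1) = \rho^2 h_1^2$ requires genuine input from the homotopy of $\ko$ beyond the formal structure of the $E_1$-page.
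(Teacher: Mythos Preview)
Your proof is correct and takes a genuinely different route from the paper's. The paper first establishes $d_1(\tau^2) = \rho^2 \cdot \tau h_1$ by citing \cite[Theorem 20]{ARO2020} and the Steenrod action on the slices, and then derives $d_1(v_1^2)$ from Equation~(\ref{eq:ARO}); you run this in the opposite direction, obtaining $d_1(v_1^2)$ from the $\C$-motivic comparison of Section~\ref{subsctn:ko-C} and then deducing $d_1(\tau^2)$ via Leibniz on the same relation. Both arguments pivot on Equation~(\ref{eq:ARO}), just with source and target swapped. Your approach has the virtue of being internal to the paper (it uses only Proposition~\ref{prop:ko-C-diff} and Proposition~\ref{prop:ko-E1} rather than an external Steenrod computation), at the cost of a slightly longer write-up.

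For $d_1(\tau h_1) = 0$ the paper does not give a separate argument: it is subsumed either in the Steenrod-action citation or in the ``alternative'' sentence appealing to Hill's Adams spectral sequence computation of $\pi_{*,*}(\ko)$. Your argument via $(\rho\eta)^2 = 2\rho\eta \neq 0$ in $\pi_{0,0}(\ko)$ is a localized version of that alternative: you only need $\pi_{0,0}$ rather than the full bigraded ring. One small tightening: rather than asserting $\pi_{0,0}(\ko) \cong GW(\R)$ outright, it suffices to note that the composite $S \to \ko \to \KO$ induces the identity on $\pi_{0,0} = GW(\R)$, so the unit map is injective on $\pi_{0,0}$ and $2\rho\eta$ remains nonzero in $\pi_{0,0}(\ko)$. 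Phrasing the conclusion as ``$\rho^2 h_1^2$ detects $(\rho\eta)^2$'' is mildly circular; the cleaner formulation is that $(\rho\eta)^2$ has Adams--Novikov filtration $\geq 4$, and if $d_1(\tau h_1) = \rho^2 h_1^2$ then the Leibniz-propagated differentials $d_1(\rho^k h_1^k \cdot \tau h_1) = \rho^{k+2} h_1^{k+2}$ wipe out everything in filtration $\geq 4$ at $(0,*,0)$, forcing $(\rho\eta)^2 = 0$.
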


\begin{proof}
The value of $d_1(\tau^2)$ follows from \cite[Theorem 20]{ARO2020} 
and $\R$-motivic Steenrod algebra actions.  Then
the value of $d_1(v_1^2)$ follows from Equation (\ref{eq:ARO}).

Alternatively, there is only one pattern of effective differentials
that computes the motivic stable homotopy groups of $\ko$,
which were previously computed with the $\R$-motivic Adams
spectral sequence \cite{Hill-A(1)}.
\end{proof}

The entire $d_1$ differential in the effective spectral sequence for $\ko$
can easily be deduced from Proposition \ref{prop:ko-diff}
and the Leibniz rule.

\begin{theorem}
\label{thm:ko-Einfty}
The $E_\infty$-page of the 
effective spectral sequence for $\ko$
is depicted in Figures \ref{fig:ko-Einfty:0}, \ref{fig:ko-Einfty:1},
and \ref{fig:ko-Einfty:2}.
\end{theorem}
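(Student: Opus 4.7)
The plan is to extract $E_\infty(\ko)$ from the $E_1$-page of Proposition \ref{prop:ko-E1} by running $d_1$ to completion and then ruling out higher differentials. First, I would extend the $d_1$-values of Proposition \ref{prop:ko-diff} to all of $E_1(\ko)$ via the Leibniz rule, using that $\rho$, $h_1$, and $\tau h_1$ are $d_1$-cycles. Applied to a monomial $\rho^a h_1^b (\tau h_1)^c \tau^{2d} v_1^{2e}$, this produces
$$\rho^a h_1^b (\tau h_1)^{c+1}\bigl(d \rho^2 \tau^{2(d-1)} v_1^{2e} + e \tau^{2d} h_1^2 v_1^{2(e-1)}\bigr),$$
after which the quadratic relation $(\tau h_1)^2 = \tau^2 h_1^2 + \rho^2 v_1^2$ must be used to rewrite each contribution in the chosen monomial basis.

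Second, I would compute kernels modulo images coweight-by-coweight. The computation naturally splits into a torsion-free part (the $\tau^2$-$v_1^2$ lattice modulo the $d_1$-image, producing the $v_1^4$-periodic pattern) and a $2$-torsion part carried by $h_1$- and $\tau h_1$-multiples. As a consistency check, reducing modulo $\rho$ should recover $E_\infty(\ko^\C)$ of Theorem \ref{thm:ko-C-Einfty}, and the $\rho$-towers should fill in to produce the diagrams in Figures \ref{fig:ko-Einfty:0}, \ref{fig:ko-Einfty:1}, and \ref{fig:ko-Einfty:2}.

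Third, I would rule out $d_r$ for $r \geq 2$. Two compatible comparisons suffice. Extension of scalars gives a map to the effective spectral sequence for $\ko^\C$, which collapses at $E_2$ for degree reasons, so any putative higher differential on $E_r(\ko)$ must land in $\rho$-multiples. Inverting $\eta$ yields a map to the $\eta$-periodic effective spectral sequence for $\ko$, in which the relevant sources and targets both survive and line up so that no higher differential is possible; this pins down the $\rho$-multiples as well. Alternatively, one can invoke Hill's $\R$-motivic Adams spectral sequence computation \cite{Hill-A(1)} of $\pi_{*,*}(\ko)$, which fixes the underlying groups and leaves only the effective filtration to be determined.

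The main obstacle is the bookkeeping forced by the relation $(\tau h_1)^2 = \tau^2 h_1^2 + \rho^2 v_1^2$. When a monomial contains both $\tau^2$- and $v_1^2$-factors, the two contributions to $d_1$ interact through this identity, so kernels and images cannot be computed factor by factor but must instead be assembled in each tri-degree after collecting all such mixings. Once the bookkeeping is organized (conveniently by coweight, as in the chart layout), the two comparison arguments close out the no-higher-differentials claim and one reads the $E_\infty$-page off the diagrams.
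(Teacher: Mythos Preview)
Your proposal is correct and follows essentially the same outline as the paper: extend $d_1$ via the Leibniz rule, compute $E_2$ coweight by coweight, then rule out higher differentials. The main divergence is in that last step. The paper dispatches $d_r$ for $r\geq 2$ with a terse ``for degree reasons'': once one has the $E_2$-page in hand, direct inspection shows that $E_2(\ko)$ vanishes in coweight $3$ mod $4$ (so the filtration-$0$ generators in coweight $0$ mod $4$ have no targets), and the remaining potential sources $\rho$, $h_1$, $\tau h_1$ are permanent cycles coming from the sphere via the unit map. Your comparison route---reducing mod $\rho$ to $\ko^\C$ and then handling $\rho$-multiples via $\ko[\eta^{-1}]$, or alternatively invoking Hill's Adams computation---is a legitimate substitute, but your $\eta$-periodic step is underspecified: you assert that ``no higher differential is possible'' there without establishing that the $\eta$-periodic spectral sequence for $\ko$ collapses or that the periodicization map is injective on the classes in play. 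The Hill fallback is the cleanest of your three options.

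One point the paper makes explicit that your proposal leaves buried in ``bookkeeping'': the differential
\[
d_1(\tau^2 \cdot \tau h_1 \cdot v_1^2) = \tau^4 \cdot h_1^4 + \rho^4 \cdot v_1^4,
\]
which yields the key $E_2$ relation $\tau^4 \cdot h_1^4 = \rho^4 \cdot v_1^4$. This is exactly the structural consequence of the quadratic relation $(\tau h_1)^2 = \tau^2 h_1^2 + \rho^2 v_1^2$ that you flag as the main obstacle, and it is worth isolating rather than absorbing into the general Leibniz expansion.
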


\begin{proof}
The Leibniz rule, together with the 
values in Table \ref{tab:ko-E1-generators}
(see also Proposition \ref{prop:ko-diff}), 
completely determines
the effective $d_1$ differential on $E_1(\ko)$.  
The $E_2$-page can then be determined directly.
However, the computation is not entirely straightforward.
Of particular note is the differential
\[
d_1(\tau^2 \cdot \tau h_1 \cdot v_1^2) = 
\tau^4 \cdot h_1^4 + \rho^4 \cdot v_1^4,
\]
which yields the relation
\begin{equation}
\label{eq:t^4h1^4}
\tau^4 \cdot h_1^4 = \rho^4 \cdot v_1^4
\end{equation}
in $E_2(\ko)$.
 
For degree reasons, there can be no higher differentials in the
effective spectral sequence for $\ko$.
\end{proof}

For legibility,
Figures \ref{fig:ko-Einfty:0}, \ref{fig:ko-Einfty:1}, 
and \ref{fig:ko-Einfty:2} display $E_\infty(\ko)$ in three
different charts separated by coweight modulo 4.  There is no chart for
coweights 3 mod 4 because $E_\infty(\ko)$
is zero in those coweights.

Figure \ref{fig:ko-d1:2-1} illustrates part of the
analysis of the
$d_1$ differentials and the determination of $E_2(\ko)$;
it is meant to be representative, not thorough.
The chart shows some of the elements in coweights $1$ and $2$ mod 4,
together with the $d_1$ differentials that relate these elements.
In this chart, one can see that
$\tau^2 \cdot h_1^2 + \rho^2 \cdot v_1^2$ survives to $E_2(\ko)$.
This element survives to $E_\infty(\ko)$.  It is labelled
$(\tau h_1)^2$ in Figure \ref{fig:ko-Einfty:2}, in accordance with
Equation (\ref{eq:ARO}).

\begin{remark}
There is an alternative, slightly more structured, method for
obtaining $E_\infty(\ko)$.  One can filter $E_1(\ko)$ by powers
of $\tau h_1$ and obtain a spectral sequence that converges to
$E_2(\ko)$.  In this spectral sequence, we have the relation
$\tau^2 \cdot h_1^2 = \rho^2 \cdot v_1^2$.  There are differentials
$d_1(\tau^2) = \rho^2 \cdot \tau h_1$ and $d_1(v_1^2) = h_1^2 \cdot \tau h_1$.  Then there is a higher differential $d_3(\tau^2 \cdot v_1^2) = 
(\tau h_1)^3$.  None of this is essential to
our study, but the interested reader may wish to carry out the details.
\end{remark}\hfill

Table \ref{tab:ko-Einfty-generators} lists the multiplicative
generators of $E_\infty(\ko)$.  It is possible to give a complete
list of relations.  However, the long list is not so helpful for
understanding the structure of $E_\infty(\ko)$.  The charts in
Figures \ref{fig:ko-Einfty:0}, \ref{fig:ko-Einfty:1}, and
\ref{fig:ko-Einfty:2} are more useful for this purpose.

\begin{longtable}{llll}
\caption{Multiplicative generators for $E_\infty(\ko)$
\label{tab:ko-Einfty-generators} 
} \\
\hline
coweight & $(s, f, w)$ & $x$ & $\psi^3(x)$  \\
\hline \endfirsthead
\caption[]{Multiplicative generators for $E_\infty(\ko)$} \\
\hline
coweight & $(s, f, w)$ & $x$ & $\psi^3(x)$ \\
\hline \endhead
\hline \endfoot
$0$ & $(-1,1,-1)$ & $\rho$ & $\rho$ \\
$0$ & $(1,1,1)$ & $h_1$ & $h_1$ \\
$1$ & $(1,1,0)$ & $\tau h_1$ & $\tau h_1$ \\
$2$ & $(0,0,-2)$ & $2 \tau^2$ & $2 \tau^2$ \\
$2$ & $(4,0,2)$ & $2 v_1^2$ & $9 \cdot 2 v_1^2$ \\
$4$ & $(0,0,-4)$ & $\tau^4$ & $\tau^4$ \\
$4$ & $(4,0,0)$ & $2 \tau^2 v_1^2$ & $9 \cdot 2 \tau^2 v_1^2$\\
$4$ & $(8,0,4)$ & $v_1^4$ & $81 v_1^4$ \\
\end{longtable}

\begin{proposition}
\label{prop:ko-hidden}
Table \ref{tab:ko-hidden} lists some hidden extensions
by $\rho$, $\h$, and $\eta$ in the effective spectral sequence for
$\ko$.  All other hidden extensions 
by $\rho$, $\h$, and $\eta$ 
are $v_1^{4}$-multiples and $\tau^4$-multiples of these.
\end{proposition}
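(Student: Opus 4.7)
The plan is to leverage the comparison between $\R$-motivic and $\C$-motivic homotopy established in Proposition \ref{prop:R-C-compare}, which identifies $\pi_{*,*}(\ko/\rho)$ with $\pi_{*,*}(\ko^\C)$, together with Remark \ref{rem:ko-C-homotopy}, which tells us that $E_\infty(\ko^\C)$ has no hidden extensions at all. The guiding principle is that the $\C$-motivic side is so rigid that every hidden extension in $\R$-motivic $\ko$ is forced to arise from the $\rho$-torsion / $\rho$-divisibility structure encoded in the cofiber sequence $\Sigma^{-1,-1}\ko \xrightarrow{\rho} \ko \to \ko/\rho \simeq \ko^\C$.

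First I would treat the hidden $\rho$ extensions. These come directly from the long exact sequence associated to the cofiber sequence above. A hidden $\rho$ extension from $x$ to $y$ means $y$ lies in strictly higher effective filtration than $\rho\cdot x$ yet represents $\rho\cdot[x]$ in $\pi_{*,*}(\ko)$; such $y$ are identified via the connecting map from $\pi_{*,*}(\ko/\rho)\cong\pi_{*,*}(\ko^\C)$ into $\pi_{*-1,*-1}(\ko)$. Inspecting the known structure of $E_\infty(\ko^\C)$ (Theorem \ref{thm:ko-C-Einfty}) and comparing with Figures \ref{fig:ko-Einfty:0}, \ref{fig:ko-Einfty:1}, \ref{fig:ko-Einfty:2} pins down exactly which classes must absorb a hidden $\rho$-multiplication.

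Next I would handle hidden $\h$ and $\eta$ extensions. Because $E_\infty(\ko^\C)$ has no hidden extensions, the target $y$ of any hidden $\R$-motivic $\h$- or $\eta$-extension from $x$ must either have trivial image in $\ko^\C$, in which case $y$ is $\rho$-divisible in homotopy, or its image must agree with the visible $\h$- or $\eta$-multiple of the image of $x$. This reduces the question to a pure bookkeeping check against the known $\C$-motivic homotopy. Additional constraints are supplied by the universal motivic identity $2=\rho\eta+\h$ (see Section \ref{subsec:notation}) and the Toda bracket relation $\tau\eta^2=\langle\h,\eta,\h\rangle$, which force specific extensions in much the same way as in the argument of Proposition \ref{prop:L-C-hidden}.

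Finally, the $v_1^4$- and $\tau^4$-multiplications on $E_\infty(\ko)$ act periodically on the pattern of elements, so the verification reduces to finitely many base cases in low coweights. The main obstacle is the bookkeeping in precisely those coweights where the relations $(\tau h_1)^2 = \tau^2\cdot h_1^2 + \rho^2\cdot v_1^2$ from Equation (\ref{eq:ARO}) and the derived identity $\tau^4\cdot h_1^4 = \rho^4\cdot v_1^4$ from Equation (\ref{eq:t^4h1^4}) make the additive structure of $E_\infty(\ko)$ most intricate; one must take care that an element which looks $\rho$-divisible on a chart is not secretly $v_1$-divisible, or conversely. Once these base cases are checked, the claim that every hidden $\rho$-, $\h$-, or $\eta$-extension is a $v_1^4$- or $\tau^4$-multiple of one in Table \ref{tab:ko-hidden} follows from naturality of the comparison under these periodicity operators.
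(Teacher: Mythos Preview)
Your overall framework matches the paper's: both arguments rest on Proposition~\ref{prop:R-C-compare} identifying $\pi_{*,*}(\ko/\rho)$ with $\pi_{*,*}(\ko^\C)$, and both extract the hidden $\rho$ extensions from the long exact sequence of the cofiber sequence $\Sigma^{-1,-1}\ko \xrightarrow{\rho} \ko \to \ko/\rho$. The paper likewise finishes by observing that for degree reasons no further hidden extensions can occur, which is your periodicity-plus-bookkeeping remark made precise.

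The tactical differences are in how the individual $\h$ and $\eta$ extensions are pinned down. For the hidden $\h$ extension on $\tau h_1$, the paper does not use the Toda bracket $\tau\eta^2 = \langle \h,\eta,\h\rangle$; instead it imports the extension directly from the $\R$-motivic sphere via the unit map $S \to \ko$, citing \cite{DI17} and \cite{R-paper} (or alternatively Hill's Adams spectral sequence computation). The $\h$ extension on $(\tau h_1)^2$ then follows by multiplying by $\tau h_1$. For the hidden $\eta$ extension on $2\tau^2$, the paper uses the \emph{other} map in the long exact sequence, namely $\ko/\rho \to \Sigma^{0,-1}\ko$: it identifies the images of $\tau^3$ and $\tau^3 h_1$ as $2\tau^2$ and $\rho(\tau h_1)^2$, and then transports the visible $h_1$ extension between them in $\ko/\rho$ to the desired hidden $\eta$ extension in $\ko$. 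Your proposal gestures at this direction only for $\rho$ extensions and is vaguer on how the $\eta$ extension is actually forced; the paper's argument here is more concrete than the constraint ``the target must be $\rho$-divisible'' that you sketch. Your Toda-bracket route for the $\h$ extensions can be made to work (applying it with $\alpha = \rho$, using $\h\rho = 0$), so this is a genuine alternative, but the unit-map argument is shorter since the relevant extension is already known in $\pi_{*,*}(S)$.
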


\begin{proof}
Recall from Proposition \ref{prop:R-C-compare} that the homotopy
of $\ko/\rho$ is isomorphic to the homotopy of $\ko^\C$.
Therefore, we completely understand the homotopy of
$\ko/\rho$ from Theorem \ref{thm:ko-C-Einfty} and
Figure \ref{fig:ko-C-Einfty}.

The hidden $\rho$ extensions follow from
inspection of the long exact sequence associated to the cofiber
sequence
\[
\Sigma^{-1,-1} \ko \ttoo{\rho} \ko \to \ko/\rho.
\]
The map $\ko \to \ko/\rho$ takes
the elements $\tau^4 \cdot h_1^3$ and $(\tau h_1)^2 h_1$ to zero
because there are no possible targets in the homotopy of $\ko/\rho$.
Therefore, those two elements must receive hidden $\rho$ extensions,
and there is only one possibility in both cases.

The relation $\tau^4 \cdot h_1^4 = \rho^4 \cdot v_1^4$
(see Equation (\ref{eq:t^4h1^4}))
 then
implies that $2 \tau^2 v_1^2$ also supports an $h_1$ extension.

The map $\ko/\rho \to \Sigma^{0,-1} \ko$
takes $\tau^3$ and $\tau^3 h_1$ to $2 \tau^2$ and
$\rho (\tau h_1)^2$ respectively.
There is an $h_1$ extension connecting 
$\tau^3$ and $\tau^3 h_1$ in $\ko/\rho$, so there must be a hidden
$\eta$ extension from $2 \tau^2$ to $\rho (\tau h_1)^2$.

The hidden $\h$ extension on $\tau h_1$ follows from the analogous
hidden extension in the homotopy groups of the $\R$-motivic sphere
\cite{DI17}
\cite{R-paper}, using the unit map $S \to \ko$.
Alternatively, this hidden extension is computed
in \cite[Proposition 4.3]{Hill-A(1)} in the context of the
$\R$-motivic Adams spectral sequence for $\ko$.

Finally, multiply by $\tau h_1$ to obtain the hidden
$\h$ extension on $(\tau h_1)^2$.

For degree reasons, there are no other possible hidden extensions to
consider.
\end{proof}

\begin{longtable}{lllll}
\caption{Hidden extensions in $E_\infty(\ko)$
\label{tab:ko-hidden} 
} \\
\hline
coweight & source & type & target & $(s, f, w)$ \\
\hline \endfirsthead
\caption[]{Hidden extensions in $E_\infty(\ko)$} \\
\hline
coweight  & source & type & target & $(s, f, w)$  \\
\hline \endhead
\hline \endfoot
$2$  & $2 v_1^2$ & $\rho$ & $(\tau h_1)^2 h_1$ & $(3 , 3, 1)$ \\
$4$  & $2 \tau^2 v_1^2$ & $\rho$ & $\tau^4 \cdot h_1^3$ & $(3, 3, -1)$ \\
$4$  & $2 \tau^2 v_1^2$ & $\eta$ & $\rho^3 \cdot v_1^4$ & $(5, 3, 1)$ \\
$2$  & $2 \tau^2$ & $\eta$ & $\rho (\tau h_1)^2$ & $(1, 3, -1)$ \\
$1$  & $\tau h_1$ & $\h$ & $\rho \cdot \tau h_1 \cdot h_1$ & $(1, 3, 0)$ \\
$2$  & $(\tau h_1)^2$ & $\h$ & $\rho (\tau h_1)^2 h_1$ & $(2, 4, 0)$ \\
\end{longtable}

\begin{remark}
We have completely analyzed the $E_\infty$-page of the effective spectral
sequence for $\ko$, but this is not quite the same as completely describing
the homotopy of $\ko$.  
In particular, one must choose an element of
$\pi_{*,*} \ko$ that is represented by each
multiplicative generator of $E_\infty(\ko)$ (see Table \ref{tab:ko-Einfty-generators}).
In some cases,
there is more than one choice
because of the presence
of elements in higher filtration in the $E_\infty$-page.
The choices of $\rho$, $h_1$, $\tau h_1$, and $\tau^4$
can be made arbitrarily;
the ring structure is unaffected by these choices.
The elements $2 \tau^2$ and $2 v_1^2$ are already well-defined
because there are no elements in higher filtration.
Finally, the choices of $2 \tau^2 v_1^2$ and $v_1^4$ can then be uniquely
specified by the relations
$\rho \cdot 2 \tau^2 v_1^2 = \tau^4 \cdot h_1^3$ and
$\rho^4 \cdot v_1^4 = \tau^4 \cdot h_1^4$.
\end{remark}

\subsection{$\eta$-periodic $\ko$}
\label{subsctn:ko-eta}

Later we will need some information about 
the $\eta$-periodic spectrum $\ko[\eta^{-1}]$.
As in Section \ref{subsctn:L-C-eta},
powers of $h_1$ are inconsequential for computational
purposes in the $\eta$-periodic context.  
Consequently, we have removed these powers from all
$\eta$-periodic formulas.

\begin{proposition}
\label{prop:ko-eta-E1}
The effective $E_1$-page for $\ko$ is given by
\[
E_1 (\ko[\eta^{-1}]) =
\F_2[h_1^{\pm 1}, \tau, \rho, v_1^2].
\]
Moreover, 
the periodicization map $\ko \to \ko[\eta^{-1}]$ induces the map
on effective $E_1$-pages 
whose values are given in 
Table \ref{tab:ko-E1-generators}.
\end{proposition}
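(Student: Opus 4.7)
The plan is to follow the strategy of Proposition~\ref{prop:L-C-eta-E1} from the $\C$-motivic setting. By \cite[Corollary 4.6]{Spitzweck2008RelationsBS} the effective slice functors $s_q$ commute with the homotopy colimit defining $\eta$-periodicization, and since $h_1$ is the effective detector of $\eta$ this yields $s_*(\ko[\eta^{-1}]) \simeq s_*(\ko)[h_1^{-1}]$, so
\[
E_1(\ko[\eta^{-1}]) = E_1(\ko)[h_1^{-1}].
\]
My first step is to record this reduction explicitly, then extract both assertions of the proposition from a direct computation of the localization.

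Next I would compute $E_1(\ko)[h_1^{-1}]$ using the presentation in Proposition~\ref{prop:ko-E1}. Inverting $h_1$ turns the relation $2h_1 = 0$ into $2 = 0$, so the result is an $\F_2$-algebra and the two other $2$-torsion relations become automatic. Introducing the formal symbol $\tau := (\tau h_1)\cdot h_1^{-1}$, the only remaining relation is the defect relation from Equation~(\ref{eq:ARO}),
\[
(\tau h_1)^2 = \tau^2 \cdot h_1^2 + \rho^2 \cdot v_1^2,
\]
which, when rewritten in terms of $\tau$, solves the original generator $\tau^2 \in E_1(\ko)$ (the indecomposable $T \in (H\Z)_{*,*}$) as
\[
T = \tau^2 + \rho^2 \cdot v_1^2 \cdot h_1^{-2}
\]
in the localized ring. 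After this substitution no further relations remain, leaving the polynomial algebra $\F_2[h_1^{\pm 1}, \tau, \rho, v_1^2]$ as claimed.

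For the second assertion, functoriality of the effective slice filtration under $\ko \to \ko[\eta^{-1}]$ induces the map of $E_1$-pages, and the values on $\rho$, $h_1$, $\tau h_1$, and $v_1^2$ are tautological once one adopts the $h_1$-suppressing normalization introduced in Section~\ref{subsctn:L-C-eta}. The image of the generator $\tau^2$ is precisely the expression $T = \tau^2 + \rho^2 v_1^2 h_1^{-2}$ obtained above, which matches the entry $\tau^2 + \rho^2 \cdot v_1^2$ of Table~\ref{tab:ko-E1-generators} after suppressing the power of $h_1$.

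The main subtlety, and the only step that is not formal, is the careful distinction between the indecomposable ``$\tau^2$'' of $(H\Z)_{*,*}$ appearing in Proposition~\ref{prop:ko-E1} and the honest square of the newly defined $\tau \in (H\F_2)_{*,*}$ that generates the periodicized algebra. Equation~(\ref{eq:ARO}) measures exactly this discrepancy via the $\rho^2 v_1^2$ correction term; this feature is absent in the $\C$-motivic analogue (where $\rho = 0$) and is the only genuinely $\R$-motivic input to the argument.
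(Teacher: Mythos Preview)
Your proposal is correct and follows the same approach as the paper: both reduce to inverting $h_1$ in the presentation of $E_1(\ko)$ from Proposition~\ref{prop:ko-E1}, citing \cite[Corollary 4.6]{Spitzweck2008RelationsBS} for the commutation of slices with the relevant colimit. Your version is in fact more explicit than the paper's, which simply says ``we can just invert $h_1$'' and leaves the reader to unwind the defect relation~(\ref{eq:ARO}); your careful distinction between the indecomposable generator $\tau^2$ and the square of the new element $\tau = (\tau h_1)h_1^{-1}$ is exactly the computation the paper elides, and it correctly accounts for the Table~\ref{tab:ko-E1-generators} entry.
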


The first part of Proposition \ref{prop:ko-eta-E1} was first proved in
\cite[Theorem 19]{ARO2020}, although the notation is different.

\begin{proof}
The functors $s_*$ commute with homotopy colimits \cite[Corollary 4.6]{Spitzweck2008RelationsBS}.
Therefore, we can just invert $h_1$ in the description
of $E_1(\ko)$ given in Proposition \ref{prop:ko-E1}
(see also Figure \ref{fig:ko-E1}).
\end{proof}

\begin{remark}
\label{rem:t^2}
Table \ref{tab:ko-E1-generators} gives an unexpected value 
for $\tau^2$.
Recall that $\tau^2$ is indecomposable in $E_1(\ko)$, so there is
no inconsistency.
The unexpected value arises from Equation (\ref{eq:ARO}).
\end{remark}

\subsection{The Adams operation $\psi^3$ in effective spectral sequences}
\label{subsctn:psi3}

Our goal in this section is to study $\psi^3$ as a map
of effective spectral sequences.  This will allow us to compute
the $E_1$-page of the effective spectral sequence for $L$.

\begin{lemma}
\label{lem:psi3-Einfty-values}
The map $E_\infty(\ko) \to E_\infty(\ko)$ induced by $\psi^3$ on
effective $E_\infty$-pages 
takes the values shown in Table \ref{tab:ko-Einfty-generators}.
\end{lemma}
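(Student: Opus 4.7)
The plan is to exploit that $\psi^3$ is a ring endomorphism of $\ko$ (Corollary \ref{cor:psi3}(1)) which respects the effective slice filtration by construction (Theorem \ref{thm:BH-adams} and the fact that $\psi^3$ on $\ko$ is obtained from a map of very effective covers). Consequently $\psi^3$ induces a ring endomorphism of $E_\infty(\ko)$ that preserves bidegrees exactly, so it suffices to compute its value on each multiplicative generator in Table \ref{tab:ko-Einfty-generators}. For each generator I will invoke one of three comparison tools: the unit map $S \to \ko$ (Corollary \ref{cor:psi3}(2)), extension of scalars $\ko \to \ko^\C$ together with the $\C$-motivic computation (Lemma \ref{lem:psi3-Einfty-C-values}), or Betti realization (Corollary \ref{cor:psi3}(3)).

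First I would handle $\rho$ and $h_1$: both detect homotopy classes that lie in the image of the unit map $S \to \ko$ (namely the classes $\rho$ and $\eta$), so Corollary \ref{cor:psi3}(2) immediately yields $\psi^3(\rho) = \rho$ and $\psi^3(h_1) = h_1$. Next, for $\tau h_1$, $2\tau^2$, and $\tau^4$, I would pull back information from $\C$-motivic: extension of scalars $E_\infty(\ko) \to E_\infty(\ko^\C)$ sends $\tau h_1 \mapsto \tau \cdot h_1$, $2\tau^2 \mapsto 2\tau^2$, and $\tau^4 \mapsto \tau^4$, each of which is fixed by $\psi^3$ by Lemma \ref{lem:psi3-Einfty-C-values} and multiplicativity. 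Naturality of $\psi^3$ under extension of scalars then pins down the values in $E_\infty(\ko)$, provided there is no element of the relevant bidegree $(1,1,0)$, $(0,0,-2)$, or $(0,0,-4)$ in the kernel of the comparison map; these are the only possible ``correction terms,'' and a glance at Figures \ref{fig:ko-Einfty:0}, \ref{fig:ko-Einfty:1}, and \ref{fig:ko-Einfty:2} confirms no such $\rho$-torsion classes exist.

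Finally, for the $v_1$-type generators $2v_1^2$, $v_1^4$, and $2\tau^2 v_1^2$, I would use Betti realization. Classically $\psi^3$ acts on $v_1^{2k} \in \pi_*(\ko^\top)$ by multiplication by $9^k$, and Betti realization sends these $\R$-motivic classes to the classical $v_1^{2k}$. Combined with the ring map property, this forces $\psi^3(2v_1^2) = 9 \cdot 2v_1^2$, $\psi^3(v_1^4) = 81 v_1^4$, and $\psi^3(2\tau^2 v_1^2) = \psi^3(2\tau^2)\cdot \psi^3(v_1^2) = 9 \cdot 2\tau^2 v_1^2$, again up to the possibility of correction terms from the kernel of Betti realization in the relevant bidegrees.

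The main obstacle is precisely this bookkeeping of correction terms: one must check, bidegree by bidegree, that $E_\infty(\ko)$ contains no extraneous classes that could be added to $\psi^3(x)$ consistent with the values under extension of scalars and Betti realization. I expect this to be routine once the charts are in hand, since the generators live in bidegrees that are essentially one-dimensional (or where every other class in the same $(s,f,w)$ is excluded by the comparison maps), but it is the only step that is not immediate from the abstract structure of $\psi^3$.
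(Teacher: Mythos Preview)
Your approach is correct in spirit and reaches the right conclusions, but it diverges from the paper's in a way worth noting. The paper uses only two tools: the unit map $S \to \ko$ (for $\rho$, $h_1$, \emph{and} $\tau h_1$, since $\tau h_1$ also detects a class in the image of the unit) and Betti realization (for everything else, including $2\tau^2$ and $\tau^4$, which realize to $2$ and $1$ in $\pi_0(\ko^\top)$). You instead route $\tau h_1$, $2\tau^2$, $\tau^4$ through extension of scalars to $\ko^\C$. This works, but it requires naturality of $\psi^3$ under base change and compatibility of extension of scalars with the effective filtration, neither of which is recorded explicitly among the paper's stated tools (Corollary~\ref{cor:psi3} only gives Betti realization). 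The paper's route is more self-contained and avoids your bookkeeping about the kernel of the comparison map; the key simplification the paper invokes is that on $E_\infty$ one works modulo higher filtration, so Betti realization immediately pins down the leading coefficient.

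One genuine slip: your computation $\psi^3(2\tau^2 v_1^2) = \psi^3(2\tau^2)\cdot\psi^3(v_1^2)$ is not valid as written, because $2\tau^2 v_1^2$ is indecomposable in $E_\infty(\ko)$ (neither $\tau^2$ nor $v_1^2$ survives). You should instead argue directly via Betti realization, as the paper does: $2\tau^2 v_1^2$ realizes to $2v_1^2 \in \pi_4(\ko^\top)$, on which the classical $\psi^3$ multiplies by $9$. Also, the kernel of extension of scalars consists of $\rho$-\emph{divisible} classes (image of $\rho$), not $\rho$-torsion classes; this does not affect your conclusion but the terminology should be corrected.
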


\begin{proof}
Corollary \ref{cor:psi3}(2) gives the values of $\psi^3$ on
$\rho$, $h_1$, and $\tau h_1$.

The value of $\psi^3$ on $\tau^4$ is determined
immediately by comparison along Betti realization to the
classical value $\psi^3(1) = 1$.
The computation is greatly simplified
by ignoring terms in higher effective filtration.
Similarly, the value of $\psi^3$ on $2 \tau^2$ is determined
by the classical value $\psi^3(2) = 2$.

The remaining values in Table \ref{tab:ko-Einfty-generators}
are also determined by comparison along Betti realization to 
the classical values $\psi^3(2 v_1^2) = 9 \cdot 2 v_1^2$ and
$\psi^3(v_1^4) = 81 v_1^4$.
\end{proof}

\begin{lemma}
\label{lem:psi3-E1-values}
The map $E_1(\ko) \to E_1(\ko)$ induced by $\psi^3$ on
effective $E_1$-pages 
takes the values shown in Table \ref{tab:ko-E1-generators}.
\end{lemma}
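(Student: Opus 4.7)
The plan is to proceed exactly as in the proof of Lemma \ref{lem:psi3-E1-C-values}: use the known $E_\infty$-values from Lemma \ref{lem:psi3-Einfty-values} together with Corollary \ref{cor:psi3}, and in each case verify that the relevant bidegree of $E_1(\ko)$ admits no ``error term'' in higher effective filtration. Concretely, $\psi^3$ is a map of spectral sequences, so its value on an $E_1$-generator $x$ is determined modulo higher filtration by its value on the corresponding $E_\infty$-class, and we just need to rule out filtration ambiguity one bidegree at a time.

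First I would handle $\rho$, $h_1$, and $\tau h_1$. Each lifts to an element in the image of the unit map $\pi_{*,*}(S)\to\pi_{*,*}(\ko)$ (namely the classes $\rho$, $\eta$, and $\tau\eta$ respectively), so Corollary \ref{cor:psi3}(\ref{item:psi(S)}) gives $\psi^3(x)=x$ in homotopy, and a quick inspection of Proposition \ref{prop:ko-E1} shows the bidegrees $(-1,1,-1)$, $(1,1,1)$, and $(1,1,0)$ contain no classes in strictly higher effective filtration. So the identity holds on the nose in $E_1$.

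For $\tau^2$ I would argue via the action on slices: $s_0(\ko)=H\Z$, and the induced map $s_0(\psi^3):H\Z\to H\Z$ is a unital map determined by its value on the unit, hence is the identity. Thus $\psi^3$ fixes every element of $\pi_{*,*}(H\Z)$, including $\tau^2\in\pi_{0,-2}(s_0(\ko))$. Equivalently, one could note that the $E_\infty$-equalities $\psi^3(2\tau^2)=2\tau^2$ and $\psi^3(\tau^4)=\tau^4$ together force $\psi^3(\tau^2)=\tau^2$, since $E_1(\ko)$ has no classes in higher filtration at bidegree $(0,0,-2)$.

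Finally, for $v_1^2$ I would appeal to Corollary \ref{cor:psi3}(\ref{part:psi3-Betti}): Betti realization sends $v_1^2$ to the square of the classical Bott class, on which $\psi^3$ acts by $9$. This gives $\psi^3(v_1^2)=9v_1^2+E$ where $E$ must lie in the kernel of Betti realization and in the appropriate slice filtration. The main bookkeeping step, which I expect to be the only mildly subtle point, is checking that the bidegree $(4,0,2)$ portion of $E_1(\ko)$ consists only of $\Z$-multiples of $v_1^2$; in particular there is no contribution from the $\Sigma^{2,2}H\F_2$-summand of $s_2(\ko)$, since $\pi_{2,-2}(H\F_2)=0$ (as $\F_2[\tau,\rho]$ is concentrated in non-positive stems), and no contribution from higher slices by degree reasons. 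This forces $E=0$ and completes the proof.
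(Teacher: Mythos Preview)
Your treatment of $\rho$, $h_1$, $\tau h_1$, and $\tau^2$ is fine and close in spirit to the paper's; in fact your first argument for $\tau^2$ (that $s_0(\psi^3)$ is a unital self-map of $H\Z$, hence the identity) is a pleasant alternative to the paper's route, which instead lifts the $E_\infty$-equality $\psi^3(\tau^4)=\tau^4$ to $E_1$ and then uses the ring-map relation $(\psi^3(\tau^2))^2=\psi^3((\tau^2)^2)$ to take a square root.

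The gap is in your argument for $v_1^2$. You invoke Corollary~\ref{cor:psi3}(\ref{part:psi3-Betti}), but that statement is about $\pi_{*,*}(\ko)$, not about $E_1(\ko)$, and $v_1^2$ is \emph{not} a permanent cycle (Table~\ref{tab:ko-E1-generators} records $d_1(v_1^2)=\tau h_1\cdot h_1^2$). So there is no homotopy class ``$v_1^2$'' to which the Betti-realization square applies, and the corollary does not directly pin down $\psi^3(v_1^2)$ on the $E_1$-page. Your bookkeeping that $E_1^{4,0,2}(\ko)=\Z\cdot v_1^2$ is correct (minor slip: the relevant group is $\pi_{2,0}(H\F_2)$ rather than $\pi_{2,-2}$, though both vanish), so you do know $\psi^3(v_1^2)=b\,v_1^2$ for some $b\in\Z$; the issue is determining $b$.

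The paper closes this with the same square-root trick it uses for $\tau^2$: since $v_1^4$ \emph{is} a permanent cycle, the $E_\infty$-value $\psi^3(v_1^4)=81\,v_1^4$ from Lemma~\ref{lem:psi3-Einfty-values} lifts to $\psi^3((v_1^2)^2)=81\,(v_1^2)^2$ on $E_1$, and then the ring-map relation $(\psi^3(v_1^2))^2=81\,(v_1^2)^2$ gives $b^2=81$, hence $b=9$ (the sign being fixed, if one wants to be careful, by the permanent cycle $2v_1^2$). You could also repair your argument by replacing Betti realization with the comparison to $E_1(\ko^\C)$ and quoting Lemma~\ref{lem:psi3-E1-C-values}, since extension of scalars does act on the level of slices.
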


\begin{proof}
The values of $\psi^3$ on $E_1(\ko)$ are compatible
with the values of $\psi^3$ on $E_\infty(\ko)$, as 
shown in Table \ref{tab:ko-Einfty-generators}.
This immediately yields the value of $\psi^3$ on $\rho$, $h_1$, and 
$\tau h_1$.

The value of $\psi^3((\tau^2)^2)$ must be $(\tau^2)^2$
by compatibility
with the value of $\psi^3(\tau^4)$ in $E_\infty(\ko)$.
Then the relation
$\psi^3((\tau^2)^2) = (\psi^3(\tau^2))^2$ implies that
$\psi^3(\tau^2) = \tau^2$.

Similarly,
the value of $\psi^3((v_1^2)^2)$ must be $81 (v_1^2)^2$ by
compatibility with the value of $\psi^3(v_1^4)$ in $E_\infty(\ko)$.
Then the relation 
$\psi^3((v_1^2)^2) = (\psi^3(v_1^2))^2$ implies that
$\psi^3(v_1^2) = 9 v_1^2$.
\end{proof}

\begin{remark}
\label{rmk:psi3-E1-values}
Since $\psi^3$ is a ring homomorphism, all values of $\psi^3$
on $E_1(\ko)$ 
are readily determined by the values on multiplicative generators
given in Table \ref{tab:ko-E1-generators}.
In particular, for all $k \geq 0$,
\[
\psi^3(v_1^{2k}) = 9^k v_1^{2k}.
\]
\end{remark}

\begin{remark}
{
Table \ref{tab:ko-E1-generators} implies that
$\psi^3(v_1^4) = 81 v_1^4$.
The careful reader will notice that this expression
appears to be simpler than
the analogous formula in \cite[Theorem 3.1(2)]{BH20}.}
The difference is explained by the fact that
we are working only up to higher filtration.  In particular,
our formulas do not reflect the difference between $2$ and $\h$.
This also means that our formulas are less precise, but that has
no consequence for our computational results.
\end{remark}

\section{The effective spectral sequence for $L$}

\subsection{The effective $E_1$-page of $L$}
\label{subsec:slice-L}

In this section we
compute the $E_1$-page of the effective spectral sequence for $L$.

The fiber sequence $L \to \ko\ttoo{\psi^3-1} \ko$ induces a fiber sequence 
\[
s_* L \tto s_* \ko \ttoo{\psi^3-1} s_* \ko
\]
on slices.  Upon taking homotopy groups, we obtain a long exact 
sequence
\[
\cdots \tto E_1(L) \tto E_1(\ko) \ttoo{\psi^3-1} E_1(\ko) \tto \cdots.
\]
Table \ref{tab:ko-E1-generators} (see also Lemma \ref{lem:psi3-E1-values}) gives us complete computational
knowledge of the map
$E_1(\ko) \to E_1(\ko)$.
This allows us to compute $E_1(L)$.

\begin{proposition}
\label{prop:L-E1}
The chart in Figure \ref{fig:L-E1} depicts the effective $E_1$-page of $L$.
\end{proposition}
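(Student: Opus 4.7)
The plan is to parallel the $\C$-motivic computation of Proposition \ref{prop:L-C-E1}. The fiber sequence $s_*(L) \tto s_*(\ko) \ttoo{\psi^3-1} s_*(\ko)$ on slices produces a long exact sequence of $E_1$-pages, from which I extract a short exact sequence
\[
0 \tto \Sigma^{-1} C \tto E_1(L) \tto K \tto 0,
\]
where $K$ and $C$ denote the kernel and cokernel of $\psi^3 - 1$ acting on $E_1(\ko)$. The proof therefore reduces to computing $K$ and $C$ from the multiplicative description of $E_1(\ko)$ in Proposition \ref{prop:ko-E1} together with the values of $\psi^3$ recorded in Lemma \ref{lem:psi3-E1-values} and Remark \ref{rmk:psi3-E1-values}.

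First I would analyze the kernel. Since $\psi^3$ fixes each of $\rho$, $h_1$, $\tau h_1$, and $\tau^2$, the sub-object of monomials not involving $v_1^2$ lies entirely in $K$. For monomials of the form $x \cdot v_1^{2k}$ with $k \geq 1$, the identity $\psi^3(v_1^{2k}) = 9^k v_1^{2k}$ shows that $\psi^3 - 1$ acts as multiplication by the even integer $9^k - 1$. Since $2\rho = 2 h_1 = 2 \cdot \tau h_1 = 0$ in $E_1(\ko)$, such a monomial lies in $K$ precisely when $x$ is already a multiple of $\rho$, $h_1$, or $\tau h_1$.

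Next I would compute the cokernel. Monomials free of $v_1^2$ contribute unchanged to $C$, while each monomial $x \cdot v_1^{2k}$ with $k \geq 1$ imposes the relation $(9^k - 1) \cdot x v_1^{2k} = 0$. Lemma \ref{lem:valuation} rewrites this as $2^{v(2k)+2} \cdot x v_1^{2k} = 0$ on the $2$-torsion-free part, exactly as in the $\C$-motivic case. Stitching $K$ and $C$ via the short exact sequence then produces the bigraded pattern of Figure \ref{fig:L-E1}; the $\Sigma^{-1}$ accounts for the coweight shift of those generators originating from the fiber (the $\iota$-classes, in the notation of Table \ref{tab:L-C-E1-generators}).

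The main obstacle is the multiplicative relation $(\tau h_1)^2 = \tau^2 \cdot h_1^2 + \rho^2 \cdot v_1^2$ from Proposition \ref{prop:ko-E1}, which mingles $v_1^2$-free and $v_1^2$-containing terms. Its compatibility with $\psi^3$ is enforced by $2\rho = 0$: this forces $8 \rho^2 \cdot v_1^2 = 0$, reconciling $\psi^3((\tau h_1)^2) = (\tau h_1)^2$ with $\psi^3(\rho^2 \cdot v_1^2) = 9 \rho^2 \cdot v_1^2$. I would therefore fix a monomial basis in which $\tau h_1$ appears to at most the first power, systematically reduce all products using this relation, and then read off $K$ and $C$ on this basis. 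Bookkeeping tridegree by tridegree in $(s,f,w)$ then matches the answer with the chart in Figure \ref{fig:L-E1}.
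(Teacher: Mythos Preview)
Your proposal is correct and follows essentially the same route as the paper: both use the long exact sequence on slices to extract the short exact sequence $0 \to \Sigma^{-1}C \to E_1(L) \to K \to 0$, then compute $K$ and $C$ directly from the values of $\psi^3$ in Table~\ref{tab:ko-E1-generators} and the valuation Lemma~\ref{lem:valuation}. Your extra paragraph handling the relation $(\tau h_1)^2 = \tau^2 h_1^2 + \rho^2 v_1^2$ is a useful sanity check that the paper leaves implicit, but it does not change the argument.
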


\begin{proof}
The long exact sequence
\[
\cdots \tto E_1(L) \tto E_1(\ko) \ttoo{\psi^3-1} E_1(\ko) \tto \cdots
\]
induces a short exact sequence
\[
0 \tto \Sigma^{-1} C \tto E_1(L) \tto K \tto 0,
\]
where $C$ and $K$ are the cokernel and kernel of 
$E_1(\ko) \ttoo{\psi^3-1} E_1(\ko)$.
The cokernel and kernel can be computed directly from
the information given in Lemma \ref{lem:psi3-E1-values}.
See also Remark \ref{rmk:psi3-E1-values}.

The kernel consists of all elements in $E_1(\ko)$ with the
exception of the integer multiples of $\tau^{2j} \cdot v_1^{2k}$ 
for $j \geq 0$ and $k > 0$.

The cokernel $C$ is nearly the same as $E_1(\ko)$ itself.
We must impose the relations $(3^{2k}-1) v_1^{2k} = 0$ for all $k > 0$.
Lemma \ref{lem:valuation} says that
$3^{2k}-1$ equals $2^{v(2k)+2} \cdot u$, where $u$ is an odd number,
i.e., a unit in our $2$-adic context.
Therefore, the relation
$(3^{2k}-1) v_1^{2k} = 0$ is equivalent to the relation
$2^{v(2k)+2} v_1^{2k} = 0$.
\end{proof}

Table \ref{tab:L-E1-generators} lists some elements
of the effective $E_1$-page of $L$.  In fact, by inspection
these elements
are multiplicative generators for $E_1(L)$.

We use the same notation for elements of $E_1(L)$ and their
images in $E_1(\ko)$.
On the other hand,
we define the element $\iota x$ of $E_1(L)$ to be the
image of $x$ under the map 
$\iota: \Sigma^{-1} E_1(\ko) \to E_1(L)$.
For example, the element $1$ of $E_1(\ko)$ maps to $\iota$ in $E_1(L)$.

\begin{longtable}{llll}
\caption{Multiplicative generators for $E_1(L)$: $k \geq 0$
\label{tab:L-E1-generators} 
} \\
\hline
coweight & $(s, f, w)$ & generator & image in $E_1(L[\eta^{-1}])$ \\
\hline \endfirsthead
\caption[]{Multiplicative generators for $E_1(L)$: $k \geq 0$} \\
\hline
coweight & $(s, f, w)$ & generator & image in $E_1(L[\eta^{-1}])$ \\
\hline \endhead
\hline \endfoot
$2$ & $(0,0,-2)$ & $\tau^2$ & $\tau^2 + \rho^2 \cdot v_1^2$ \\
$2k+1$ & $(4k+1,1,2k)$ & $\tau h_1 v_1^{2k}$ & $\tau (v_1^2)^k$ \\
$2k$ & $(4k-1,1,2k-1)$ & $\rho v_1^{2k}$ & $\rho (v_1^2)^k$ \\
$2k$ & $(4k+1,1,2k+1)$ & $h_1 v_1^{2k}$ & $(v_1^2)^k$ \\
$2k-1$ & $(4k-1,1,2k)$ & $\iota v_1^{2k}$ & $\iota (v_1^2)^k$ \\
\end{longtable}

\subsection{The effective spectral sequence for $L[\eta^{-1}]$}
\label{subsctn:eta-inverted}

In Section \ref{subsec:slice-L}, we determined the
effective $E_1$-page of $L$.  The next steps in the analysis
of the effective spectral sequence for $L$ are to determine
the multiplicative structure of $E_1(L)$ 
(see Section \ref{subsec:mult-relation})
and to determine the effective differentials 
(see Sections \ref{subsec:d1-diff} and \ref{subsec:higher-diff}).

Before doing so, we collect some information
on the $\eta$-periodicization $L[\eta^{-1}]$.
We will study $L[\eta^{-1}]$ by comparing to the more easily
understood $\ko[\eta^{-1}]$.

As in Sections \ref{subsctn:L-C-eta} and \ref{subsctn:ko-eta},
powers of $h_1$ are inconsequential for computational
purposes in the $\eta$-periodic context.  
Consequently, we have removed these powers from all
$\eta$-periodic formulas.

\begin{proposition}
\label{prop:L-eta-E1}
The effective $E_1$-page for $L[\eta^{-1}]$ is given by
\[
E_1 (L[\eta^{-1}]) =
\F_2[\tau, \rho, v_1^2, \iota]/\iota^2.
\]
Moreover, 
the periodicization map $L \to L[\eta^{-1}]$ induces the map
$E_1(L) \to E_1(L[\eta^{-1}])$ whose values are given in 
Table \ref{tab:L-E1-generators}.
\end{proposition}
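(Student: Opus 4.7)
The plan is to mirror the arguments of Propositions \ref{prop:ko-eta-E1} and \ref{prop:L-C-eta-E1}. The starting point is that slice functors commute with homotopy colimits \cite[Corollary 4.6]{Spitzweck2008RelationsBS}, so $\eta$-periodicization commutes with the formation of the effective $E_1$-page. Applying this to the fiber sequence $L \tto \ko \ttoo{\psi^3-1} \ko$ yields the long exact sequence
\[
\cdots \tto E_1(L[\eta^{-1}]) \tto E_1(\ko[\eta^{-1}]) \ttoo{\psi^3-1} E_1(\ko[\eta^{-1}]) \tto \cdots,
\]
which is the key computational input.

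The crucial observation is that $\psi^3-1$ vanishes on $E_1(\ko[\eta^{-1}])$. Indeed, by Lemma \ref{lem:psi3-E1-values} and Remark \ref{rmk:psi3-E1-values}, $\psi^3$ fixes the classes $\rho$, $h_1$, and $\tau h_1$, and satisfies $\psi^3(v_1^{2k}) = 9^k v_1^{2k}$. Since $E_1(\ko[\eta^{-1}]) = \F_2[h_1^{\pm 1}, \tau, \rho, v_1^2]$ has $\F_2$-coefficients (Proposition \ref{prop:ko-eta-E1}), and $9 \equiv 1 \pmod 2$, the map $\psi^3 - 1$ is identically zero. Hence the long exact sequence breaks into short exact sequences and produces an additive splitting
\[
E_1(L[\eta^{-1}]) \isom E_1(\ko[\eta^{-1}]) \oplus \Sigma^{-1} E_1(\ko[\eta^{-1}])
\]
as $E_1(\ko[\eta^{-1}])$-modules. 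The first summand is $\F_2[\tau, \rho, v_1^2]$ (suppressing $h_1$ per the conventions of the section), generated by the image of the unit map. The second summand is free on the class $\iota$ arising as the image of $1$ under $\Sigma^{-1} E_1(\ko[\eta^{-1}]) \tto E_1(L[\eta^{-1}])$. Exactly as in Proposition \ref{prop:L-C-eta-E1}, the relation $\iota^2 = 0$ holds for degree reasons, since the tridegree of $\iota^2$ falls in a position that the additive splitting shows to be zero. This establishes the claimed ring presentation $\F_2[\tau, \rho, v_1^2, \iota]/\iota^2$.

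For the assertion about the periodicization map $L \tto L[\eta^{-1}]$, I would argue by naturality. The commutative square
\[
\xymatrix{
E_1(L) \ar[r]\ar[d] & E_1(\ko) \ar[d] \\
E_1(L[\eta^{-1}]) \ar[r] & E_1(\ko[\eta^{-1}])
}
\]
lets us compute the image in $E_1(L[\eta^{-1}])$ of each generator in Table \ref{tab:L-E1-generators} that lies in the kernel summand directly from its image in $E_1(\ko[\eta^{-1}])$, which is recorded in Table \ref{tab:ko-E1-generators}; this is where the mildly surprising value $\tau^2 + \rho^2 \cdot v_1^2$ for $\tau^2$ enters. The $\iota$-multiples behave analogously via the naturality of the boundary map $\Sigma^{-1} E_1(\ko) \tto E_1(L)$ after $\eta$-inversion. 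I do not expect any substantive obstacle beyond this bookkeeping; the entire proof rests on the observation that $\psi^3-1$ vanishes modulo~$2$ on the $\eta$-periodic $E_1$-page, after which the remaining arguments are formal.
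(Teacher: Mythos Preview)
Your proposal is correct and follows essentially the same approach as the paper: both invoke the long exact sequence for $\psi^3-1$ on $E_1(\ko[\eta^{-1}])$, observe that $\psi^3-1$ vanishes there, obtain the additive splitting, and dispatch $\iota^2=0$ for degree reasons. Your justification that $\psi^3-1=0$ (via $9\equiv 1\pmod 2$ on $\F_2$-coefficients) is slightly more explicit than the paper's terse ``because $(\psi^3-1)(h_1)=0$,'' and you spell out the naturality argument for the periodicization map more fully than the paper does, but the substance is the same.
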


\begin{proof}
As in Proposition \ref{prop:ko-eta-E1}, we can just invert $h_1$
in the additive description of $E_1(L)$ given in 
Proposition \ref{prop:L-E1}.

The map $E_1(\ko[\eta^{-1}]) \ttoo{\psi^3-1} E_1(\ko[\eta^{-1})$
is trivial because 
$(\psi^3-1)(h_1) = 0$, as shown in
Table \ref{tab:ko-E1-generators} (see also Lemma \ref{lem:psi3-E1-values}).
Therefore, the long exact sequence
\[
\cdots \tto E_1(L[\eta^{-1}) \tto E_1(\ko[\eta^{-1}]) \ttoo{\psi^3-1} E_1(\ko[\eta^{-1}]) \tto \cdots
\]
splits as
\[
E_1(L[\eta^{-1}]) \cong
E_1(\ko[\eta^{-1}]) \oplus \Sigma^{-1} E_1(\ko[\eta^{-1}]).
\]
With Proposition \ref{prop:ko-eta-E1}, this establishes
the additive structure of $E_1(L[\eta^{-1}])$, as well
as most of the multiplicative structure.

The relation $\iota^2 = 0$ is immediate because there are no possible
non-zero values for $\iota^2$.
\end{proof}

\begin{remark}
As in Remark \ref{rem:t^2},
Table \ref{tab:L-E1-generators} gives an unexpected value for $\tau^2$,
which arises from Equation (\ref{eq:ARO}).
Also,
the last column of Table \ref{tab:L-E1-generators} leaves out
of $h_1$ for readability.
\end{remark}

\begin{remark}
Note that $E_1(L[\eta^{-1}])$ is very close to the
effective $E_1$-page for the $\eta$-periodic sphere $S[\eta^{-1}]$
\cite[Theorem 2.32]{RSO19} \cite[Theorem 2.3]{OR20}.
The element $\iota$ is not present in $E_1(S[\eta^{-1}])$, but 
the elements $\iota v_1^{2k}$ are present.
\end{remark}

\begin{proposition}
\label{prop:L-eta-diff}
Some values of the differentials in the effective spectral sequence of $L[\eta^{-1}]$ are:
\begin{enumerate}
\item
$d_1(v_1^2) = \tau$.
\item
$d_{n+1}(v_1^{2^n}) = \rho^{n+1} \cdot \iota v_1^{2^n}$ for $n \geq 2$.
\end{enumerate}
The effective differentials are zero on all other multiplicative generators
on all pages.
\end{proposition}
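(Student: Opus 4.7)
The plan is to pin down $d_1$ by naturality with $\ko[\eta^{-1}]$, extract the higher differentials by comparison with the $\eta$-periodic sphere, and rule out the remaining differentials by tridegree and Leibniz.

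For (1), I would apply naturality along the map $L[\eta^{-1}] \to \ko[\eta^{-1}]$. By Proposition \ref{prop:ko-diff} and Table \ref{tab:ko-E1-generators}, $d_1(v_1^2) = \tau h_1 \cdot h_1^2$ in $E_1(\ko)$, which reduces to $d_1(v_1^2) = \tau$ in $E_1(\ko[\eta^{-1}])$ after suppressing powers of $h_1$. Since $v_1^2 \in E_1(L[\eta^{-1}])$ maps to $v_1^2 \in E_1(\ko[\eta^{-1}])$, the value of $d_1(v_1^2)$ in $E_1(L[\eta^{-1}])$ must equal $\tau$ modulo the kernel of the map to $E_1(\ko[\eta^{-1}])$. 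That kernel consists of $\iota$-multiples by the splitting in Proposition \ref{prop:L-eta-E1}, and a short tridegree check rules out any $\iota$-correction in the appropriate tridegree.

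For (2), the higher differentials have exactly the form of the Andrews--Miller differentials that govern the effective (equivalently, Adams--Novikov) spectral sequence for the $\eta$-periodic $\R$-motivic sphere, established in \cite{GI16} and \cite{OR20}. The unit map $S \to L$ induces a map of $\eta$-periodic effective spectral sequences that sends $v_1^{2^n} \in E_*(S[\eta^{-1}])$ to $v_1^{2^n} \in E_*(L[\eta^{-1}])$, and the corresponding differentials transfer, landing on $\rho^{n+1} \cdot \iota v_1^{2^n}$ in $E_*(L[\eta^{-1}])$ for $n \geq 2$. The hard part here is the careful identification of generators across differing notational conventions. As an alternative, the differentials can be extracted from the fiber sequence $L[\eta^{-1}] \tto \ko[\eta^{-1}] \ttoo{\psi^3-1} \ko[\eta^{-1}]$ by computing $\pi_{*,*}(L[\eta^{-1}])$ directly: one uses that $\psi^3$ acts on $v_1^{2k}$-multiples by $9^k$, applies Lemma \ref{lem:valuation} to read off 2-adic valuations, and then invokes Equation (\ref{eq:ARO}) together with the hidden extensions in $E_\infty(\ko)$ to convert appropriate powers of $2$ into powers of $\rho$. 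The main obstacle is managing this bookkeeping uniformly in $n$.

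For (3), all remaining multiplicative generators $\tau$, $\rho$, and $\iota$ are permanent cycles. The classes $\tau$ and $\rho$ lift from the $\eta$-periodic sphere via $S \to L$, where they are permanent cycles. For $\iota$, a tridegree count in $E_1(L[\eta^{-1}]) = \F_2[\tau, \rho, v_1^2, \iota]/\iota^2$ shows that for every $r \geq 1$ the target tridegree $(-2, 2r, 0)$ of $d_r(\iota)$ contains no nonzero element, so $\iota$ survives for degree reasons. The Leibniz rule then propagates the differentials from (1) and (2) across all of $E_1(L[\eta^{-1}])$, and degree considerations rule out any additional independent differentials.
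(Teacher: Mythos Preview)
Your approach is correct but differs from the paper's in part (2). For (1) you use naturality along $L[\eta^{-1}] \to \ko[\eta^{-1}]$, which is exactly what the paper does in the parallel $\C$-motivic case (Proposition \ref{prop:L-C-eta-diff}); over $\R$ the paper instead cites \cite{RSO19} and \cite{OR20} directly, but your argument is equally valid. For (3) your argument is essentially the same as the paper's.

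For (2), the paper takes a different route: rather than pushing differentials forward along $S[\eta^{-1}] \to L[\eta^{-1}]$ by naturality, it invokes \cite[Theorem 1.1]{BH20} to conclude that this map is an \emph{isomorphism} on homotopy groups (away from coweight $-1$). Since $\pi_{*,*}(S[\eta^{-1}])$ is known from \cite{GI16}, this pins down $\pi_{*,*}(L[\eta^{-1}])$, and then the paper argues there is only one pattern of differentials compatible with that answer. Your naturality approach works too, but it requires the effective differentials for $S[\eta^{-1}]$ to already be established in the literature, together with a careful identification of the images of the target classes (the $\alpha$-type elements) with the $\iota v_1^{2^n}$; you correctly flag this identification as the hard part. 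The paper's approach sidesteps that bookkeeping at the cost of importing the stronger Bachmann--Hopkins input. Your alternative proposal, computing $\pi_{*,*}(L[\eta^{-1}])$ directly from the fiber sequence and then arguing uniqueness, is actually closest in spirit to what the paper does and would give an independent proof not relying on \cite{BH20}.
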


Following our convention throughout this section, we have omitted
the powers of $h_1$ from the formulas in Proposition \ref{prop:L-eta-diff}.
	
\begin{proof}
The $d_1$ differential follows from \cite[Lemma 4.2]{RSO19}
or \cite[Theorem 2.6]{OR20}.

To study the higher differentials, consider
the map $S[\eta^{-1}] \to L[\eta^{-1}]$. 
This map induces an isomorphism on stable homotopy groups,
except in coweight $-1$.  This follows from a minor adjustment
to \cite[Theorem 1.1]{BH20}.  The adjustment arises 
from the fact that our $L[\eta^{-1}]$ is the fiber 
of $\ko[\eta^{-1}] \ttoo{\psi^3-1} \ko[\eta^{-1}]$,
while \cite[Theorem 1.1]{BH20} refers to the fiber
of $\ko[\eta^{-1}] \ttoo{\psi^3-1} \Sigma^{8,4} \ko[\eta^{-1}]$.

The homotopy of $S[\eta^{-1}]$
is completely computed in \cite{GI16}, so the homotopy
of $L[\eta^{-1}]$ is known (except in coweight $-1$).
There is only one pattern of differentials that is compatible with the
known values for $L[\eta^{-1}]$.
\end{proof}

\begin{remark}
\label{rmk:profile}
In the language of \cite[Section 4]{OR20}, 
Proposition \ref{prop:L-eta-diff} establishes the profile 
of the $\eta$-periodic effective spectral sequence over $\R$.
\end{remark}

\subsection{Multiplicative relations for $E_1(L)$}
\label{subsec:mult-relation}

In this section, we will completely describe the
product structure on $E_1(L)$.  We do not need all of this 
structure for our later computations, but we include it
for completeness.

\begin{proposition}
\label{prop:E1-products}
Table \ref{tab:E1-products} lists some products in $E_1(L)$.
\end{proposition}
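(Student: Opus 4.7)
The plan is to exploit the two ring homomorphisms out of $E_1(L)$: the map $E_1(L) \to E_1(\ko)$ induced by the defining fiber sequence, and the $\eta$-periodicization $E_1(L) \to E_1(L[\eta^{-1}])$ described in Proposition \ref{prop:L-eta-E1}. Together with the $E_1(\ko)$-module structure on the shifted cokernel $\Sigma^{-1} C$ from the short exact sequence $0 \to \Sigma^{-1} C \to E_1(L) \to K \to 0$ used in Proposition \ref{prop:L-E1}, these tools suffice, because the generators in Table \ref{tab:L-E1-generators} split cleanly into a ``kernel part'' (those not involving $\iota$) and a ``cokernel part'' (those of the form $\iota v_1^{2k}$).

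First, for products of two generators not involving $\iota$, the map $E_1(L) \to E_1(\ko)$ is injective on the $K$-subgroup in the relevant degrees, so each such product can be read off from the known ring structure of $E_1(\ko)$ in Proposition \ref{prop:ko-E1}. The main care required is to track the essential relation $(\tau h_1)^2 = \tau^2 \cdot h_1^2 + \rho^2 \cdot v_1^2$ from Equation (\ref{eq:ARO}), which is what produces the unexpected image of $\tau^2$ in $E_1(L[\eta^{-1}])$ noted in Table \ref{tab:L-E1-generators}, and to verify compatibility with that image.

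Next, for products with exactly one factor of the form $\iota v_1^{2k}$, I would use that $\iota : \Sigma^{-1}C \to E_1(L)$ is $E_1(\ko)$-linear because it is the connecting map of a fiber sequence of $s_*\ko$-modules. Hence $x \cdot \iota v_1^{2k} = \iota(x \cdot v_1^{2k})$, where the inner product is computed in $E_1(\ko)$ and then projected to the cokernel $C$; the resulting element is only nonzero modulo the relation $2^{v(2k)+2} v_1^{2k} = 0$ imposed in the proof of Proposition \ref{prop:L-E1}. This step is essentially bookkeeping.

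Finally, products of two elements $\iota v_1^{2k} \cdot \iota v_1^{2l}$ must land in the kernel part of $E_1(L)$ because they lie in the image of the map $\Sigma^{-1}C \otimes \Sigma^{-1}C \to E_1(L)$ whose target is detected by the composite $E_1(L) \to K \hookrightarrow E_1(\ko)$. Their images in $E_1(L[\eta^{-1}])$ vanish by the relation $\iota^2 = 0$ of Proposition \ref{prop:L-eta-E1}, so these products are $h_1$-torsion; they can be pinned down by the combined constraints that they lie in $K = \ker(\psi^3-1)$ and that they are compatible with the known $E_1(\ko)$-action. I expect this to be the main obstacle: the possibility of error terms in higher effective filtration, invisible after $\eta$-inversion, means that one must argue by sparsity in the relevant tri-degree $(s,f,w)$ together with Equation (\ref{eq:ARO}) to rule out (or detect) contributions involving $\rho^2 v_1^2$ or $\tau^2 h_1^2$. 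Once these $\iota \cdot \iota$ products are settled, assembling the entries of Table \ref{tab:E1-products} is routine.
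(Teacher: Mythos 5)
Your plan diverges from the paper's proof, and the divergence introduces a genuine gap in the first step.

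The paper's proof is a single uniform argument: every entry of Table~\ref{tab:E1-products} is determined by its image under the periodicization map $E_1(L) \to E_1(L[\eta^{-1}])$, whose values on generators are recorded in Table~\ref{tab:L-E1-generators}. That works because in each tri-degree where a product of two generators lives, the relevant elements of $E_1(L)$ are $h_1$-torsion-free and map to distinct elements of $E_1(L[\eta^{-1}])$.

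Your Step~1 instead proposes to read off products of the non-$\iota$ generators from the map $E_1(L) \to E_1(\ko)$, asserting that this map ``is injective on the $K$-subgroup in the relevant degrees.'' But the map $E_1(L) \to E_1(\ko)$ factors through $K$ and has kernel precisely $\Sigma^{-1} C$, which is nonzero in several of those degrees. Concretely, the product $\rho v_1^{2j} \cdot \tau h_1 v_1^{2k}$ lives in degree $(s,f,w) = (4(j+k), 2, 2(j+k)-1)$, and that degree also contains the nonzero element $\iota \cdot \tau^2 h_1 v_1^{2(j+k)}$ of $\Sigma^{-1} C$; similarly $\rho v_1^{2j} \cdot h_1 v_1^{2k}$ shares its degree with $\iota \cdot \tau h_1 v_1^{2(j+k)}$, and $h_1 v_1^{2j} \cdot h_1 v_1^{2k}$ shares its degree with $\iota \cdot \rho v_1^{2(j+k+1)}$. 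The map to $E_1(\ko)$ annihilates all of these, so it cannot rule out error terms from $\Sigma^{-1}C$, and the products are genuinely not determined by Step~1 as written. This is exactly the kind of ``invisible error term'' worry you raise for Step~3; it already occurs in Step~1. The fix is the one the paper uses across the board: all these candidate error terms are $h_1$-torsion-free, so the periodicization map sees them and, together with the recorded images of generators (including the crucial image $\tau^2 \mapsto \tau^2 + \rho^2 \cdot v_1^2$ coming from Equation~(\ref{eq:ARO})), pins down each product. Your Steps~2 and~3 are in the right spirit --- Step~2's $E_1(\ko)$-linearity of $\iota$ is a sound and somewhat more structural way to handle the mixed products, and Step~3 correctly identifies $\eta$-periodicization plus a sparsity check as what is needed for $\iota \cdot \iota$ --- but the cleanest route is to run the $\eta$-periodicization argument in all three cases at once, as the paper does.
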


\begin{longtable}{l|llll}
\caption{Products in $E_1(L)$: $j \geq 0$ and $k \geq 0$
\label{tab:E1-products} 
} \\
& $\rho v_1^{2j}$ & $h_1 v_1^{2j}$ & $\tau h_1 v_1^{2j}$ & $\iota v_1^{2j}$ \\
\hline \endfirsthead
\caption[]{Products in $E_1(L)$: $j \geq 0$ and $k \geq 0$} \\
& $\rho v_1^{2j}$ & $h_1 v_1^{2j}$ & $\tau h_1 v_1^{2j}$ & $\iota v_1^{2j}$ \\
\hline \endhead
\hline \endfoot
$\rho v_1^{2k}$ & $\rho \cdot \rho v_1^{2j+2k}$ \\
$h_1 v_1^{2k}$ & $\rho \cdot h_1 v_1^{2j+2k}$ & $h_1 \cdot h_1 v_1^{2j+2k}$ \\
$\tau h_1 v_1^{2k}$ & $\rho \cdot \tau h_1 v_1^{2j+2k}$ & $h_1 \cdot \tau h_1 v_1^{2j+2k}$ & $\tau^2 \cdot h_1 \cdot h_1 v_1^{2j+2k} + $ \\
& & & $+ \rho \cdot \rho v_1^{2j+2k+2}$ \\
$\iota v_1^{2k}$ & $\rho \cdot \iota v_1^{2j+2k}$ & $h_1 \cdot \iota v_1^{2j+2k}$ & $\tau h_1 \cdot \iota v_1^{2j+2k}$ & $0$ \\
\end{longtable}

\begin{proof}
All of these products are detected by
$E_1(L[\eta^{-1}])$, which is described in
Proposition \ref{prop:L-eta-E1}.
We need the values of the periodicization map
$E_1(L) \to E_1(L[\eta^{-1}])$ given in
Table \ref{tab:L-E1-generators}.
\end{proof}

\subsection{The effective $d_1$ differential for $L$}
\label{subsec:d1-diff}

Our next task is to compute the
differentials in the effective spectral sequence for $L$.

\begin{proposition}
\label{prop:d1(j)}
Table \ref{tab:L-d1} gives the values of the
effective $d_1$ differential on the multiplicative generators
of $E_1(L)$.  
\end{proposition}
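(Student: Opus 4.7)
The plan is to compute $d_1$ on each class in Table \ref{tab:L-E1-generators} by combining two naturality comparisons: the map $L \to \ko$, for which Table \ref{tab:ko-E1-generators} already records $d_1$, and the periodicization $L \to L[\eta^{-1}]$, whose differentials are pinned down in Proposition \ref{prop:L-eta-diff}. Since $d_1$ is a derivation, once the values on the listed generators are established, every other value is forced by the Leibniz rule together with the product table (Proposition \ref{prop:E1-products}).

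First I would handle the generators that are in the image of $E_1(\ko) \to E_1(L)$, namely $\tau^2$, $\tau h_1 v_1^{2k}$, $h_1 v_1^{2k}$, and $\rho v_1^{2k}$. For each such $x$, the differential $d_1(x)$ must project to the known value of $d_1$ of the corresponding class in $E_1(\ko)$ (see Proposition \ref{prop:ko-diff}). In particular, $d_1(\tau^2) = \rho^2 \cdot \tau h_1$ comes directly from the $\ko$-computation, while $d_1(h_1 v_1^{2k}) = 0$ and $d_1(\rho v_1^{2k}) = 0$ hold because $h_1$ and $\rho$ are cycles and the differential on $v_1^{2k}$ (computed in $\ko$ via the Leibniz rule from $d_1(v_1^2) = \tau h_1 \cdot h_1^2$) lies in a class that is $h_1$-torsion and therefore killed after multiplying by $h_1$ or $\rho$ only if the algebra forces it; one checks what lifts into $E_1(L)$ by examining the cokernel description from Proposition \ref{prop:L-E1}. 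Finally $d_1(\tau h_1 v_1^{2k})$ is determined in the same way, together with the Leibniz rule applied to the product $\tau h_1 \cdot h_1 v_1^{2k-2}$ or similar decompositions.

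Next I would treat the $\iota$-multiples $\iota v_1^{2k}$. These are not in the image from $E_1(\ko)$, but they map under periodicization to $\iota (v_1^2)^k$ in $E_1(L[\eta^{-1}])$, where Proposition \ref{prop:L-eta-diff} tells us that $d_1$ vanishes on $\iota$-classes (all the nontrivial $\eta$-inverted differentials on $\iota v_1^{2^n}$ occur at pages $d_{n+1}$ with $n \geq 2$). Hence $d_1(\iota v_1^{2k})$ must be concentrated in the kernel of $\eta$-periodicization, and this kernel (which is the $h_1$-torsion piece of $E_1(L)$ in the appropriate bidegree) is small enough to pin down the value. The value of $d_1(\iota)$ can also be cross-checked from the long exact sequence $\cdots \to E_1(L) \to E_1(\ko) \xrightarrow{\psi^3-1} E_1(\ko) \to \cdots$ by tracking how the connecting map interacts with $d_1$ in the source and target.

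The main obstacle I anticipate is not discovering the values themselves, which are read off from the two comparisons, but rather verifying global consistency: the Leibniz rule combined with the product formulas in Table \ref{tab:E1-products}, in particular the exotic identity $\tau h_1 \cdot \iota v_1^{2k} = \tau^2 \cdot h_1 \cdot \iota v_1^{2k} \cdot (\text{something}) + \rho \cdot \rho v_1^{2k+2}$-type relations coming from $(\tau h_1)^2 = \tau^2 h_1^2 + \rho^2 v_1^2$, produces several independent expressions for $d_1$ on composite classes, and these must agree. I would carry out one such consistency check explicitly (for instance, compute $d_1(\tau h_1 \cdot \iota v_1^{2k})$ two ways) and argue that the remaining checks are of the same form. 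The absence of higher $d_1$-targets in the relevant degrees, combined with injectivity of the map to $E_1(L[\eta^{-1}])$ on the torsion-free part and the controlled kernel on the $h_1$-torsion part, then certifies that the tabulated values are the unique ones compatible with both comparison maps.
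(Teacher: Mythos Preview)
Your plan has a concrete error in the treatment of the $\iota$-classes. You assert that in $E_1(L[\eta^{-1}])$ ``$d_1$ vanishes on $\iota$-classes,'' citing the higher differentials in Proposition \ref{prop:L-eta-diff}(2). But part (1) of that proposition says $d_1(v_1^2)=\tau$, and since $\iota$ is a cycle and $d_1$ is a derivation, one gets $d_1(\iota (v_1^2)^{2k+1}) = \iota\,\tau\,(v_1^2)^{2k}\neq 0$ $\eta$-periodically. This is exactly what lifts to the nonzero entry $d_1(\iota v_1^{4k+2}) = \tau h_1\cdot h_1^2\cdot \iota v_1^{4k}$ in Table \ref{tab:L-d1}. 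If you proceed on your stated premise, you will get these differentials wrong. The same mis-step recurs for the non-$\iota$ classes: you write ``$d_1(h_1 v_1^{2k})=0$ and $d_1(\rho v_1^{2k})=0$'', but Table \ref{tab:L-d1} shows both are nonzero when $k$ is odd, again forced by $d_1(v_1^2)=\tau$ after periodicization.

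There is also a structural simplification you are missing. The paper does \emph{not} use the comparison $L\to\ko$ at all for this proposition; the $\eta$-periodicization $E_1(L)\to E_1(L[\eta^{-1}])$ is by itself injective in every target degree that arises (all potential $d_1$-values on the generators are $h_1$-divisible, hence visible $\eta$-periodically). So the argument is just: push each generator to $E_1(L[\eta^{-1}])$ via Table \ref{tab:L-E1-generators}, apply the Leibniz rule with $d_1(v_1^2)=\tau$, and then identify the unique preimage in $E_1(L)$. The only subtlety is the one the paper flags: because $\tau^2 \mapsto \tau^2 + \rho^2 v_1^2$ under periodicization, the preimage of $\tau^2 (v_1^2)^{2k}$ is $\tau^2\cdot h_1^3\cdot h_1 v_1^{4k} + \rho^2 h_1\cdot h_1 v_1^{4k+2}$, which gives the two-term formula for $d_1(\tau h_1 v_1^{4k+2})$. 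Your two-comparison strategy would eventually reach the same answers, but it introduces an unnecessary ambiguity (the kernel of $E_1(L)\to E_1(\ko)$ consists precisely of the $\iota$-classes, so that map alone cannot rule out $\iota$-correction terms) that the $\eta$-periodic map resolves directly.
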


\begin{longtable}{llll}
\caption{Effective $d_1$ differentials for $L$: $k \geq 0$
\label{tab:L-d1}
} \\
\hline
coweight & $(s, f, w)$ & $x$ & $d_1(x)$ \\
\hline \endfirsthead
\caption[]{Effective $d_1$ differentials for $L$: $k \geq 1$} \\
\hline
coweight & $(s, f, w)$ & $x$ & $d_1(x)$ \\
\hline \endhead
\hline \endfoot
$2$ & $(0,0,-2)$ & $\tau^2$ & $\rho^2 \cdot \tau h_1$ \\
$4k$ & $(8k-1, 1, 4k-1)$ & $\rho v_1^{4k}$ \\
$4k+2$ & $(8k+3, 1, 4k+1)$ & $\rho v_1^{4k+2}$ & $\rho h_1^2 \cdot \tau h_1 v_1^{4k}$ \\
$4k$ & $(8k+1, 1, 4k+1)$ & $h_1 v_1^{4k}$ & \\
$4k+2$ & $(8k+5, 1, 4k+3)$ & $h_1 v_1^{4k+2}$ & $h_1^3 \cdot \tau h_1 v_1^{4k}$ \\
$4k+3$ & $(8k+5, 1, 4k+2)$ & $\tau h_1 v_1^{4k+2}$ & $\tau^2 \cdot h_1^3 \cdot h_1 v_1^{4k} + \rho^2 h_1 \cdot h_1 v_1^{4k+2}$ \\
$4k+1$ & $(8k+1, 1, 4k)$ & $\tau h_1 v_1^{4k}$ & \\
$4k+1$ & $(8k+3, 1, 4k+2)$ & $\iota v_1^{4k+2}$ & $\tau h_1 \cdot h_1^2 \cdot \iota v_1^{4k}$ \\
$4k-1$ & $(8k-1, 1, 4k)$ & $\iota v_1^{4k}$ & \\
\end{longtable}

\begin{proof}
All of these differentials follow immediately from
the effective $d_1$ differentials for $L[\eta^{-1}]$, 
which are all determined by
Proposition \ref{prop:L-eta-diff}(1)
Beware that the exact values of the map
$E_1(L) \to E_1(L[\eta^{-1}])$, 
as shown in Table \ref{tab:L-E1-generators}, are important.

For example, consider the differential
on the element $\tau h_1 v_1^{4k+2}$.  It maps to
$\tau (v_1^2)^{2k+1}$ in $E_1(L[\eta^{-1}])$ (up to $h_1$ multiples,
which as usual we ignore in the $\eta$-periodic situation).
The $\eta$-periodic differential on this latter element
is $\tau^2 (v_1^2)^{2k}$.
Finally, we need to find an element of $E_1(L)$ in the correct degree
whose $\eta$-per\-i\-od\-ic\-iz\-a\-tion is 
$\tau^2 (v_1^2)^{2k}$,
The only possibility
is $\tau^2 \cdot h_1^3 \cdot h_1 v_1^{4k} + 
\rho^2 h_1 \cdot h_1 v_1^{4k+2}$.
\end{proof}

\begin{remark}
All $d_1$ differentials in $E_1(L)$ can be deduced from the information
in Table \ref{tab:L-d1} and the Leibniz rule, but the computations
can be complicated by the multiplicative relations 
of Table \ref{tab:E1-products}.
For example,
\[
d_1(\tau^2 \cdot \tau h_1 v_1^2) = 
\rho^2 \cdot \tau h_1 \cdot \tau h_1 v_1^2 +
\tau^2 (\tau^2 \cdot h_1^4 + \rho^2 h_1 \cdot h_1 v_1^2) =
\tau^4 \cdot h_1^4 + \rho^4 \cdot v_1^4.
\]
\end{remark}

Having completely analyzed the slice $d_1$ differentials
for $E_1(L)$, it is now possible to compute the 
$E_2$-page of the slice spectral sequence for $L$.

\begin{proposition}
\label{prop:L-E2}
The $E_2$-page of the effective spectral sequence for $L$ is depicted
in Figures \ref{fig:L-E2:0}, \ref{fig:L-E2:3}, \ref{fig:L-Einfty:1},
and \ref{fig:L-Einfty:2}.
\end{proposition}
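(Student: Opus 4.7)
The plan is to compute $E_2(L) = \ker(d_1)/\operatorname{im}(d_1)$ by extending the $d_1$ differentials from the multiplicative generators listed in Table \ref{tab:L-d1} to all of $E_1(L)$ via the Leibniz rule, then performing linear algebra coweight by coweight. The additive description of $E_1(L)$ is given by Proposition \ref{prop:L-E1}, the multiplicative structure by Table \ref{tab:E1-products}, and the values of $d_1$ on generators by Proposition \ref{prop:d1(j)}. These three pieces of data together determine $d_1$ on every element of $E_1(L)$, so in principle the computation is purely mechanical.

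First, I would propagate the generator differentials using Leibniz. The nontrivial feature here is that the product formulas in Table \ref{tab:E1-products} contain error terms coming from the relation $(\tau h_1)^2 = \tau^2 h_1^2 + \rho^2 v_1^2$ in $E_1(\ko)$, and these propagate into $E_1(L)$ through elements like $\tau h_1 \cdot \tau h_1 v_1^{2k}$. The sample computation immediately following Proposition \ref{prop:d1(j)}, namely
\[
d_1(\tau^2 \cdot \tau h_1 v_1^2) = \tau^4 \cdot h_1^4 + \rho^4 \cdot v_1^4,
\]
is a representative model: one expands via Leibniz, substitutes the product relation, and collects terms. This will in particular yield, as for $\ko$ in Equation (\ref{eq:t^4h1^4}), the identity $\tau^4 \cdot h_1^4 = \rho^4 \cdot v_1^4$ on $E_2(L)$, which controls the shape of the answer in coweight $0$ mod $4$.

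Next, I would organize the computation by coweight modulo $4$, exploiting the $\tau^4$-periodicity of the qualitative picture (noting however that $\tau^4$ is not itself a permanent cycle). In coweights $1$ and $2$ mod $4$, the $d_1$ pattern is comparatively simple and essentially mirrors what we found for $\ko$; the resulting $E_2$ matches Figures \ref{fig:L-Einfty:1} and \ref{fig:L-Einfty:2}, and in fact there can be no higher differentials hitting these coweights for degree reasons. In coweight $0$ mod $4$, the $d_1$ kernel and cokernel assemble (using the relation $\tau^4 h_1^4 = \rho^4 v_1^4$) into the picture of Figure \ref{fig:L-E2:0}. The coweight $3$ mod $4$ case of Figure \ref{fig:L-E2:3} is the most intricate: both the $\rho v_1^{4k+2}$ differentials and the $\tau h_1 v_1^{4k+2}$ differentials contribute, with the latter combining $\tau^2 \cdot h_1^3 \cdot h_1 v_1^{4k}$ and $\rho^2 h_1 \cdot h_1 v_1^{4k+2}$, so kernel and image have overlapping supports and must be cancelled carefully.

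As a consistency check, I would use two comparisons. First, $E_1(L) \to E_1(L[\eta^{-1}])$ of Table \ref{tab:L-E1-generators} must intertwine $d_1$ with the $\eta$-periodic differential $d_1(v_1^2) = \tau$ of Proposition \ref{prop:L-eta-diff}(1); elements on which $h_1$ acts invertibly on $E_2$ must match the $\eta$-periodic $E_2$. Second, the long exact sequence $E_1(L) \to E_1(\ko) \xrightarrow{\psi^3-1} E_1(\ko)$ is $d_1$-equivariant, so $E_2(L)$ must map compatibly to $E_2(\ko)$ from Theorem \ref{thm:ko-Einfty}; any putative $E_2$-class inconsistent with this map would indicate a bookkeeping error. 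The main obstacle will not be any single conceptual step but rather the careful cancellation in coweight $3$ mod $4$, where the $\rho$-multiples, $\tau h_1$-multiples, and $\iota$-multiples of various $v_1^{2k}$ all interact through the error terms in Table \ref{tab:E1-products}; the $\eta$-periodic and $\C$-motivic comparisons are the key tools for verifying that the resulting Figures \ref{fig:L-E2:0} and \ref{fig:L-E2:3} are correct.
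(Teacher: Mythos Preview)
Your approach is correct and would produce the right answer, but you organize the computation differently from the paper. The paper's proof observes that the $d_1$ differential preserves the image of $\Sigma^{-1} E_1(\ko) \to E_1(L)$ (the $\iota$-multiples), and moreover that every $d_1$ differential landing in this image also originates there; this is verified directly from Table~\ref{tab:L-d1}, where no non-$\iota$ generator has a $d_1$ value involving $\iota$. Consequently the $E_2$ computation splits cleanly into two independent pieces: the $\iota$-part, which is identical to the $E_1(\ko)$ computation already done in Section~\ref{sctn:ko-eff}, and the cokernel, which is a minor variant of the same (the only difference being that the elements $v_1^{2k}$ themselves are absent from $E_1(L)$). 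Your coweight-mod-$4$ organization is valid, but it treats $\iota$-multiples and non-$\iota$-multiples together and so forfeits this reduction to the already-completed $\ko$ computation; in particular your concern that ``$\iota$-multiples of various $v_1^{2k}$ all interact through the error terms'' with the other families is unfounded at the $d_1$ level, and recognizing this non-interaction would spare you most of the careful cancellation you anticipate in coweight $3$ mod $4$.
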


For legibility, Figures 
\ref{fig:L-E2:0}, \ref{fig:L-E2:3}, \ref{fig:L-Einfty:1}, and \ref{fig:L-Einfty:2} display $E_2(L)$ in four different charts
separated by coweight modulo $4$.  
Note that Figures \ref{fig:L-Einfty:1} and \ref{fig:L-Einfty:2}
also serve as $E_\infty$-page charts in coweights
$1$ and $2$ modulo $4$ because there are no higher differentials
that affect these coweights.

\begin{proof}
The Leibniz rule, together with the values in Table \ref{tab:L-d1},
completely determines the effective $d_1$ differential on $E_1(L)$.
The $E_2$-page can then be determined directly.  However,
as in the proof of Theorem \ref{thm:ko-Einfty},
the computation is not entirely straightforward.

It turns out that the $d_1$ differential preserves the image of the map
$\Sigma^{-1} E_1(\ko) \to E_1(L)$.  
Moreover, it turns out that all $d_1$ differentials with values
in the image of 
$\Sigma^{-1} E_1(\ko) \to E_1(L)$ also have source in this image.
(This is not for formal reasons; in fact, the higher effective
differentials do not have this property.)
Consequently, the determination of the $E_2$-page splits into two
separate computations: one for the image of
$\Sigma^{-1} E_1(\ko) \to E_1(L)$, and one for the cokernel
of the same map.

In more concrete terms, we can determine $E_2(L)$ by first
considering only elements of the form $\iota x$, and then separately
considering only elements that are not of this form.

The $d_1$ differential on the image of 
$\Sigma^{-1} E_1(\ko) \to E_1(L)$ is
identical to the $d_1$ differential for $\ko$ 
discussed in Section \ref{sctn:ko-eff}.
The $d_1$ differential on the cokernel of
$\Sigma^{-1} E_1(\ko) \to E_1(L)$ 
is similar to the $d_1$ differential
on $E_1(\ko)$, but slightly different.
The difference is created by the absence of the elements
$v_1^{2k}$ in $E_1(L)$.
\end{proof}

\subsection{Higher differentials}
\label{subsec:higher-diff}
We now consider the higher differentials
in the effective spectral sequence for $L$.

By inspection of the charts for $E_2(L)$,
the only possible higher differentials have source in coweight
congruent to $0$ modulo $4$ and value in coweight congruent to
$3$ modulo $4$.
In other words, in coweights congruent to $1$ and $2$ modulo 4,
we have that $E_2(L)$ equals $E_\infty(L)$.

It turns out that there are many higher differentials.
In fact, nearly all
of the elements in $E_2(L)$ in coweight congruent to 0 modulo 4 support
differentials.  While it is possible to write down explicit formulas
for all of these differentials, the formulas would be cumbersome
and not so helpful.  Rather, we give a more qualitative description
of the differentials because it is more useful for computation.

\begin{proposition}
\label{prop:L-higher-diff}
Consider the elements of $E_2(L)$ in coweights
congruent to $0$ modulo $4$ that belong to the cokernel
of the map $\Sigma^{-1} E_2(\ko) \to E_2(L)$.
\begin{enumerate}
\item
The only permanent cycles are the multiples of $1$, 
the multiples of $2 \tau^{4k}$ for $k \geq 0$, and
$\rho^a h_1^b$ for all $a \geq 0$ and $b \geq 0$.
\item
Excluding the elements listed in (1), 
if an element has coweight congruent to $2^{r-1}$ modulo $2^r$,
then it supports a $d_r$ differential.
\end{enumerate}
\end{proposition}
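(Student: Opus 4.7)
The plan is to combine three inputs: the $\eta$-periodic effective differentials of Proposition \ref{prop:L-eta-diff}, the comparison with $E_\infty(\ko)$ via the defining fiber sequence $L \to \ko \ttoo{\psi^3-1} \ko$, and the $\C$-motivic comparison of Proposition \ref{prop:R-C-compare}. The $\eta$-periodic differentials control the $v_1$-heavy part of $E_2(L)$ in coweights $\equiv 0 \pmod 4$, while the comparison with $\ko$ controls the $\eta$-torsion elements like $\tau^{4k}$.

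\textbf{Part (1).} The elements $\rho$, $h_1$, and integer multiples of $1$ are in the image of the unit map $\pi_{*,*}(S) \to \pi_{*,*}(L)$, so by multiplicativity $\rho^a h_1^b$ and all $2^n$ are permanent cycles in $E_\infty(L)$. For $2\tau^{4k}$ with $k \geq 1$, the key observation is that $\tau^{4k}$ maps under $E_r(L) \to E_r(\ko)$ to a permanent cycle in $E_\infty(\ko)$ (Theorem \ref{thm:ko-Einfty}). Hence any $d_r(\tau^{4k})$ must land in $\ker(E_r(L) \to E_r(\ko)) = \operatorname{im}(\iota)$ by exactness. Since the image of $\iota$ in coweight $4k-1$ is $2$-torsion (as computed in the proof of Proposition \ref{prop:L-E1}, where the cokernel of $\psi^3 - 1$ has relation $2^{v(j)+4} v_1^{4j} = 0$ combined with the $\mathbb{F}_2$-structure in higher filtration), we conclude $d_r(2 \tau^{4k}) = 2 d_r(\tau^{4k}) = 0$, so $2 \tau^{4k}$ is a $d_r$-cycle. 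Iterating over $r$ produces a permanent cycle.

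\textbf{Part (2).} By the Leibniz rule and Proposition \ref{prop:L-eta-diff}(2), we have $d_{n+1}(v_1^{2^n b}) = b \rho^{n+1} \iota v_1^{2^n b}$ in $E_{n+1}(L[\eta^{-1}])$ for any odd $b \geq 1$. An element of $E_2(L)$ in coweight $4j$ outside $\operatorname{im}(\iota)$ is (up to $\rho$- and $h_1$-multiples) a class in the $v_1^{4j}$-family. Writing $j = 2^{r-3} \cdot b$ with $b$ odd, so that $4j \equiv 2^{r-1} \pmod{2^r}$, the $\eta$-periodic calculation produces a non-zero $d_r$. Since Table \ref{tab:L-E1-generators} shows that the relevant generators have non-zero image under $L \to L[\eta^{-1}]$, the periodicization map forces a non-zero $d_r$ upstairs. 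The valuation constraint $r = v(j)+3$ matches Lemma \ref{lem:valuation}, which controls the $2$-adic structure of the cokernel of $\psi^3-1$. For the $\eta$-torsion classes (those involving $\tau^{4k}$), the same valuation is enforced by the long exact sequence on homotopy and the comparison $\pi_{*,*}(L/\rho) \cong \pi_{*,*}(L^\C)$ of Proposition \ref{prop:R-C-compare}.

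\textbf{Main obstacle.} The hardest part is the $\eta$-torsion piece, because the $\eta$-periodic spectral sequence is silent about classes like $\tau^{4k}$ and their products with $\rho^a h_1^b$. The strategy is to run an inductive argument on $r$: suppose permanent cycle status has been determined through $E_r$; the available targets in coweight $\equiv 3 \pmod 4$ (displayed in Figure \ref{fig:L-E2:3}) are each accounted for by the $v_1$-family differentials and the forced differentials on $\tau^{4k}$, leaving no room for additional or exceptional differentials. The consistency check against the computation of $\pi_{*,*}(L)$ extracted from the long exact sequence pins down that $\tau^{4k}$ supports precisely a $d_{v(k)+3}$ differential, in agreement with the coweight condition, and that no unlisted permanent cycles exist.
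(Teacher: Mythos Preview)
Your argument for the $v_1$-family elements in Part (2) is correct and matches the paper's approach: pull back the $\eta$-periodic differentials of Proposition \ref{prop:L-eta-diff} along the periodicization map, using the values in Table \ref{tab:L-E1-generators}. Your Part (1) argument that $2\tau^{4k}$ is a permanent cycle (because any differential lands in the $2$-torsion image of $\iota$) is also fine.

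The gap is in your treatment of $\tau^{4k}$ and its $\rho$- and $h_1$-multiples. You assert that ``the $\eta$-periodic spectral sequence is silent about classes like $\tau^{4k}$,'' and then sketch a separate argument via induction and the $L/\rho$ comparison. But this premise is false, and it is exactly the point emphasized throughout the paper via Equation (\ref{eq:ARO}). By Table \ref{tab:L-E1-generators}, the element $\tau^2 \in E_1(L)$ maps to $\tau^2 + \rho^2 v_1^2$ in $E_1(L[\eta^{-1}])$; since $\tau$ is killed by the $\eta$-periodic $d_1$, the image of $\tau^2$ in $E_2(L[\eta^{-1}])$ is $\rho^2 v_1^2$, which is nonzero. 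Consequently $\tau^{4k}$ maps to $\rho^{4k} (v_1^2)^{2k}$ in $E_2(L[\eta^{-1}])$, and the very same $\eta$-periodic differentials you used for the $v_1$-family also force the differentials on $\tau^{4k}$. The paper's proof illustrates this with the example $\tau^8 \cdot \rho v_1^{12} \mapsto \rho^9 (v_1^2)^{10}$, which then supports a $d_3$ by Proposition \ref{prop:L-eta-diff}. So no separate inductive or $L/\rho$ argument is needed; the $\eta$-periodic comparison already handles every element uniformly. Your ``main obstacle'' paragraph is therefore addressing a non-issue, and the alternative argument you gesture at (``the consistency check\ldots pins down\ldots'') is not actually carried out.
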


Proposition \ref{prop:L-higher-diff} may seem imprecise because it
does not give the values of the differentials.  However, there is only
one non-zero possible value in every case, so there is no ambiguity.

\begin{proof}
These differentials follow immediately from the
$\eta$-periodic differentials of Proposition \ref{prop:L-eta-diff},
together with multiplicative relations in $E_2(L)$.

For example,
consider the element $\tau^8 \cdot \rho v_1^{12}$ in coweight $20$,
which is congruent to $2^2$ modulo $2^3$.
Using Table \ref{tab:L-E1-generators}, we find that this element
maps to $\rho^9 (v_1^2)^{10}$ in $E_2(L[\eta^{-1}])$.
Here we are using that $\tau^2$ is zero in 
$E_2(L[\eta^{-1}])$ since it is hit by an $\eta$-periodic
$d_1$ differential.
Proposition \ref{prop:L-eta-diff} says that this element
supports an $\eta$-periodic $d_3$ differential.  It follows
that $\tau^8 \cdot \rho v_1^{12}$ also supports a $d_3$ differential.
\end{proof}

\begin{theorem}
\label{thm:L-Einfty}
The $E_\infty$-page of the effective spectral sequence for
$L$ is depicted in Figures \ref{fig:L-Einfty:0},
\ref{fig:L-Einfty:1}, \ref{fig:L-Einfty:2}, \ref{fig:L-Einfty:-1},
\ref{fig:L-Einfty:3mod8}, \ref{fig:L-Einfty:7mod16}, 
and \ref{fig:L-Einfty:15mod32}.
\end{theorem}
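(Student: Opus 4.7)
The plan is to assemble the $E_\infty$-page directly from the already-established computations of $E_2(L)$ (Proposition \ref{prop:L-E2}) and the higher differentials (Proposition \ref{prop:L-higher-diff}). Since $E_2(L)$ is known, what remains is to track which classes in coweights congruent to $0$ or $3$ modulo $4$ survive the higher differentials, and to verify that the resulting pattern matches the charts.

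First, I would dispose of the easy coweights. By the observation preceding Proposition \ref{prop:L-higher-diff}, the only possible higher differentials have source in coweights $\equiv 0 \pmod 4$ and target in coweights $\equiv 3 \pmod 4$. Consequently, in coweights $\equiv 1, 2 \pmod 4$ we have $E_\infty(L) = E_2(L)$, which is exactly the content of Figures \ref{fig:L-Einfty:1} and \ref{fig:L-Einfty:2}. The coweight $0$ piece of the subalgebra coming from the image of $\Sigma^{-1} E_2(\ko) \to E_2(L)$ also contributes no higher differentials (matching the behavior in the effective spectral sequence for $\ko$ analyzed in Theorem \ref{thm:ko-Einfty}), and this piece appears as the $\iota$-summand in Figure \ref{fig:L-Einfty:0}.

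Next, I would turn to the cokernel of $\Sigma^{-1} E_2(\ko) \to E_2(L)$ in coweights $\equiv 0 \pmod 4$, which is where all the higher differentials live. By Proposition \ref{prop:L-higher-diff}(1), the permanent cycles are exactly $1$, the $2 \tau^{4k}$ for $k \geq 0$, and the $\rho^a h_1^b$ classes; these produce the surviving towers visible in Figure \ref{fig:L-Einfty:0}. Every other element in this cokernel has a well-defined $2$-adic valuation of its coweight, and by Proposition \ref{prop:L-higher-diff}(2), an element in coweight $\equiv 2^{r-1} \pmod{2^r}$ supports a $d_r$ differential. Since the only nonzero value in the appropriate target bidegree is unique, these differentials are fully determined. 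The effect is that sources in coweight $4j$ with $v(j) = r-2$ hit targets in coweight $4j-1$, producing the stratification by $2$-adic valuation that gives rise to the separate Figures \ref{fig:L-Einfty:-1}, \ref{fig:L-Einfty:3mod8}, \ref{fig:L-Einfty:7mod16}, and \ref{fig:L-Einfty:15mod32}, each capturing the elements whose defining valuation forces a differential of a specific length.

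The main bookkeeping step, and the one that requires most care, is confirming the pattern of coweight $3 \pmod 4$ answers. Here I would argue as follows. For each coweight $4j-1$ with $v(j) = r-2$, the surviving classes form the quotient of the $E_2$ classes by the images of $d_r, d_{r-1}, \ldots, d_2$ coming from the higher-valuation strata; by induction on $r$, the images have already been computed, so what remains in coweight $4j-1$ is exactly the portion depicted in the corresponding figure. The hardest part will be getting the $\rho^{n+1}$-valued targets right and ensuring no spurious extensions enter: here I would compare carefully with the $\eta$-periodic calculation of Proposition \ref{prop:L-eta-diff}, which forces the length of the differential, together with the multiplicative structure of Table \ref{tab:E1-products} that forces its precise target in $E_r(L)$ before $h_1$-inversion. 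Since all classes not in the image of $\Sigma^{-1} E_2(\ko)$ are $h_1$-torsion-free modulo lower filtration, the $\eta$-periodic information detects them faithfully, and the proof reduces to a finite verification per congruence class of coweight.
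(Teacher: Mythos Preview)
Your approach is correct and matches the paper's: the paper's proof is a single sentence stating that the $E_\infty$-page is deduced directly from the higher differentials of Proposition~\ref{prop:L-higher-diff}, and you are spelling out exactly that deduction. Two minor corrections: first, the indexing should read $v(j) = r-3$ rather than $r-2$ (since $v(4j) = 2 + v(j) = r-1$); second, your ``induction on $r$'' and the cascade ``$d_r, d_{r-1}, \ldots, d_2$'' are unnecessary---each coweight $4j-1$ receives exactly one higher differential (of length $r = 3 + v(j)$) and neither sources nor receives any others, so $E_r = E_2$ there until the single relevant $d_r$ fires.
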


\begin{proof}
The $E_\infty$-page can be deduced directly from the
higher differentials described in
Proposition \ref{prop:L-higher-diff}.
\end{proof}

The $E_\infty$-page in coweights congruent to 
$3$ modulo $4$ is by far the most complicated case.
Figures \ref{fig:L-Einfty:3mod8}, \ref{fig:L-Einfty:7mod16}, 
and \ref{fig:L-Einfty:15mod32} display $E_\infty(L)$
in coweights congruent to 3 modulo 8,
7 modulo 16, and 15 modulo 32 respectively.

In each case (and more generally in coweights congruent to
$2^{n-1}-1$ modulo $2^n$, we see similar patterns with minor variations.
The lower boundary of each chart takes the same shape.
The upper boundary of the $\tau$-periodic portion of each chart
also takes the same shape.  However, the filtration jump between
the lower and upper boundaries increases linearly with $n$.

In addition to the $\tau$-periodic portion of each chart, 
there are also $\tau$-torsion, $\eta$-periodic regions.
These consist of bands of infinite $h_1$-towers of width $n$
that repeat every $2^{n+1}$ stems.
The first such band starts at $\iota v_1^{2^{n-1}}$.

\subsection{Hidden extensions}
\label{subsec:extensions}

Our last goal is to compute hidden extensions by
$\rho$, $\h$, and $\eta$.  See \cite[Section 4.1]{Isaksen19}
for a precise definition of a hidden extension.  
Fortunately, none of the complications
associated with crossing extensions occur in this manuscript.

\begin{proposition}
Table \ref{tab:L-hidden} lists some hidden extensions
by $\rho$, $\h$, and $\eta$ in the effective spectral sequence for $L$.
\end{proposition}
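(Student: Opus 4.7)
The plan is to adapt the strategy from Proposition \ref{prop:ko-hidden} and Proposition \ref{prop:L-C-hidden}. The decisive tool is Proposition \ref{prop:R-C-compare}, which identifies $\pi_{*,*}(L/\rho)$ with $\pi_{*,*}(L^\C)$, so we have complete computational access to $L/\rho$ from Section \ref{sctn:C}, in particular from Figure \ref{fig:L-C-Einfty} and Proposition \ref{prop:L-C-hidden}. The task is then to transport extensions back and forth across the cofiber sequence
\[
\Sigma^{-1,-1} L \ttoo{\rho} L \tto L/\rho.
\]

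First I would establish each hidden $\rho$ extension: if $x \in E_\infty(L)$ detects a class whose image in $\pi_{*,*}(L/\rho)$ vanishes, then the connecting map exhibits $x$ as the target of a $\rho$ extension; dually, a class in $\pi_{*,*}(L/\rho)$ that is not in the image of $\pi_{*,*}(L) \to \pi_{*,*}(L/\rho)$ produces a nonzero image under the boundary, giving the source of a hidden $\rho$ extension. In the listed cases the other end of the extension is uniquely determined by degree, and when it is not, the $\C$-motivic calculation resolves the ambiguity. Next, I would handle the hidden $\eta$ extensions by pushing the known $\eta$ extensions in $\pi_{*,*}(L^\C) \cong \pi_{*,*}(L/\rho)$ forward along the connecting map $L/\rho \to \Sigma^{0,-1} L$, producing $\eta$ extensions in $\pi_{*,*}(L)$ possibly corrected by higher-filtration $\rho$-multiples, exactly as in the $\eta$ extension from $2\tau^2$ to $\rho(\tau h_1)^2$ treated in Proposition \ref{prop:ko-hidden}.

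For hidden $\h$ extensions, I would use the Toda-bracket argument from the proof of Proposition \ref{prop:L-C-hidden}: the relation $\tau \eta^2 = \langle \h, \eta, \h\rangle$ in the $\R$-motivic sphere shows that if $\alpha$ is any homotopy class of $L$ with $\h \alpha = 0$, then $\tau \eta^2 \cdot \alpha = \langle \alpha, \h, \eta\rangle \h$ is $\h$-divisible, forcing a hidden $\h$ extension into the element detecting $\tau \eta^2 \alpha$. The remaining $\h$ extensions, most importantly the one on $\tau h_1$, would be imported along the unit $S \to L$ from the known $\h$ extension in the $\R$-motivic sphere \cite{DI17}, \cite{R-paper}, \cite{Hill-A(1)}, and multiplication by $\tau h_1$ or by the periodicity generators $\tau^4$ and $v_1^4$ then propagates these to all the remaining entries of Table \ref{tab:L-hidden}. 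The main obstacle will be the coweights congruent to $3$ modulo $4$ (Figures \ref{fig:L-Einfty:3mod8}--\ref{fig:L-Einfty:15mod32}), where the wide $\tau$-torsion, $\eta$-periodic bands create many a priori possible extensions; the comparison with $L^\C$ remains decisive, but one must carefully track the higher-filtration indeterminacy when lifting classes from $L/\rho$ back to $L$.
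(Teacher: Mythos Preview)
Your approach overlaps with the paper's in one key respect: both use Proposition \ref{prop:R-C-compare} and the cofiber sequence $\Sigma^{-1,-1}L \to L \to L/\rho$ to import information from $L^\C$. However, you are missing the paper's primary and simplest tool. Most entries in Table \ref{tab:L-hidden} are established not via $L/\rho$ but by direct transport along the maps $\Sigma^{-1}\ko \to L$ and $L \to \ko$ arising from the defining fiber sequence $L \to \ko \to \ko$. Proposition \ref{prop:ko-hidden} already records the hidden $\rho$, $\h$, and $\eta$ extensions for $\ko$, so any extension in $\pi_{*,*}(\ko)$ pulls back along $L \to \ko$ (giving, e.g., the extensions on $\tau h_1$, $(\tau h_1)^2$, and $2\tau^2$), and any extension pushes forward along $\Sigma^{-1}\ko \to L$ to give the corresponding extension on $\iota$-multiples (e.g., on $\iota \cdot \tau h_1$, $\iota \cdot 2\tau^2$, $\iota 2v_1^2$, $\iota 2\tau^2 v_1^2$). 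This accounts for the majority of the table with essentially no new work. Only the remaining entries---those marked ``$L/\rho$'' in the table's proof column---actually require the cofiber-sequence analysis you describe.

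Your alternative route has two soft spots. First, you invoke the Toda bracket $\tau\eta^2 = \langle \h,\eta,\h\rangle$ in the $\R$-motivic sphere, but the cited source \cite[Table 7.23]{Isaksen19} is $\C$-motivic; in the $\R$-motivic setting one must be careful about indeterminacy and about whether the relation even makes sense as stated (there is no class called $\tau$ in $\pi_{*,*}(S)$). The paper sidesteps this by using the $\h$ extensions already known in $L^\C$ (via Proposition \ref{prop:L-C-hidden}) and transporting them through $L/\rho$, rather than rerunning the bracket argument $\R$-motivically. Second, your claim that ``multiplication by $\tau h_1$ or by the periodicity generators $\tau^4$ and $v_1^4$ then propagates these to all the remaining entries'' does not obviously reach extensions such as the one from $(\tau h_1)^3$ to $\iota\tau^4\cdot\rho^2 h_1^6$, whose target lies deep in the $\iota$-part and in much higher filtration than a naive product computation suggests; the paper handles this case by the $L/\rho$ comparison, not by propagation.
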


\begin{proof}
The last column of Table \ref{tab:L-hidden} indicates the reason
for each hidden extension.
Some of the hidden extensions follow from the analogous extensions
for $\ko$ given in Table \ref{tab:ko-hidden}, using the maps
$\Sigma^{-1} \ko \to L$ and $L \to \ko$.

Other extensions follow from the long exact sequence associated
to the cofiber sequence
\[
\Sigma^{-1,-1} L \ttoo{\rho} L \ttoo{} L/\rho.
\]
Here we need that the homotopy of $L/\rho$ is isomorphic to the
homotopy of $L^\C$, as shown in Proposition \ref{prop:R-C-compare}.
For example, the hidden $\h$ extensions of 
Proposition \ref{prop:L-C-hidden} give hidden $\h$ extensions
in $L/\rho$, which then imply the hidden
extension from $\iota 4 v_1^2$ to $h_1^2 \cdot \tau h_1$.

\end{proof}

\begin{longtable}{llllll}
\caption{Hidden extensions in $E_\infty(L)$
\label{tab:L-hidden} 
} \\
\hline
coweight & source & type & target & $(s, f, w)$ & proof  \\
\hline \endfirsthead
\caption[]{Hidden extensions in $E_\infty(L)$} \\
\hline
coweight & source & type & target & $(s, f, w)$ & proof \\
\hline \endhead
\hline \endfoot
$0$  & $\iota \cdot \tau h_1$ & $\h$ & $\iota \cdot \rho h_1 \cdot \tau h_1$ & $(0, 2, 0)$ & $\Sigma^{-1} \ko \to L$  \\ 
$1$  & $\tau h_1$ & $\h$ & $\rho h_1 \cdot \tau h_1$ & $(1, 1, 0)$ & $L \to \ko$ \\ 
$1$  & $\iota (\tau h_1)^2$ & $\h$ & $\iota \cdot \rho h_1 (\tau h_1)^2$ & $(1, 3, 0)$ & $\Sigma^{-1} \ko \to L$ \\
$1$  & $\iota \cdot 2 \tau^2$ & $\eta$ & $\iota \cdot \rho (\tau h_1)^2$ & $(-1, 1, -2)$ & $\Sigma^{-1} \ko \to L$ \\
$1$  & $\iota 2 v_1^2$ & $\rho$ & $\iota \cdot h_1 (\tau h_1)^2$ & $(3, 1, 2)$ & $\Sigma^{-1} \ko \to L$ \\
$1$ & $\iota 4 v_1^2$ & $\h$ & $h_1^2 \cdot \tau h_1$ & $(3, 1, 2)$ & $L/\rho$ \\
$2$ & $(\tau h_1)^2$ & $\h$ & $\rho h_1 (\tau h_1)^2$ & $(2, 2, 0)$ & $L \to \ko$ \\
$3$ & $\iota 4 \tau^2 v_1^2$ & $\h$ & $(\tau h_1)^3$ & $(3, 1, 0)$ & $L/\rho$ \\
$3$ & $\iota 2 \tau^2 v_1^2$ & $\rho$ & $\iota \tau^4 \cdot h_1^3$ & $(3, 1, 0)$ & $\Sigma^{-1} \ko \to L$ \\
$3$ & $\iota 2 \tau^2 v_1^2$ & $\eta$ & $\rho^3 \cdot \iota v_1^4$ & $(3, 1, 0)$ & $\Sigma^{-1} \ko \to L$ \\
\hline
$2$ & $2 \tau^2$ & $\eta$ & $\rho (\tau h_1)^2$ & $(0, 0, -2)$ & $L \to \ko$ \\
$3$ & $(\tau h_1)^3$ & $\h$ & $\iota \tau^4 \cdot \rho^2 h_1^6$ & $(3, 3, 0)$ & $L/\rho$ \\
$5$ & $\iota v_1^4 \cdot 8 \tau^2$ & $\h$ & $\rho^2 \cdot \tau h_1 v_1^4$ & $(7, 1, 2)$ & $L/\rho$  \\
\end{longtable}

\begin{remark}
\label{rmk:L-hidden-tau^4}
The hidden extensions in Table \ref{tab:L-hidden} are $\tau^4$-periodic
in the following sense.  If we take the source and target of each
extension in $E_1(L)$ and multiply by $\tau^4$, then we obtain permanent
cycles that are related by a hidden extension.
For example, the hidden $\h$ extension from
$\tau h_1$ to $\rho h_1 \cdot \tau h_1$ generalizes to a family of 
hidden extensions
from $\tau^{4k+1} h_1$ to 
$\rho h_1 \cdot \tau^{4k+1} h_1$ for all $k \geq 0$.
\end{remark}

\begin{remark}
\label{rmk:L-hidden-v1^4}
Similarly to the $\tau^4$-periodicity discussed in 
Remark \ref{rmk:L-hidden-tau^4}, most of the hidden extensions
in Table \ref{tab:L-hidden} are $v_1^4$-periodic as well.
For example, the hidden $\h$ extension from
$\tau h_1$ to $\rho h_1 \cdot \tau h_1$ generalizes to a family of
hidden extensions from $\tau h_1 v_1^{4k}$ to 
$\rho h_1 \cdot \tau h_1 v_1^{4k}$ for all $k \geq 0$.
There are three exceptions, which appear below the horizontal
divider at the bottom of the table.  These exceptions
are discussed in more detail in Remarks \ref{rmk:L-hidden-2tau^2},
\ref{rmk:L-hidden-8tau^2v1^4i}, and
\ref{rmk:L-hidden-(tauh1)^3}.
\end{remark}

\begin{remark}
\label{rmk:L-hidden-2tau^2}
The hidden $\eta$ extension from $2 \tau^2$ to $\rho (\tau h_1)^2$
is $\tau^4$-periodic as in Remark \ref{rmk:L-hidden-tau^4},
but it is not $v_1^4$-periodic.  The elements
$2 \tau^2 v_1^{4k}$ are not permanent cycles for $k \geq 1$.
\end{remark}

\begin{remark}
\label{rmk:L-hidden-8tau^2v1^4i}
The hidden $\h$ extension from $\iota v_1^4 \cdot 8 \tau^2$
to $\rho^2 \cdot \tau h_1 v_1^4$ is $v_1^4$-periodic, but the situation
is slightly
more complicated than in Remark \ref{rmk:L-hidden-v1^4}.
For all $k$, 
$\rho^2 \cdot \tau h_1 v_1^{4k}$ receives a hidden $\h$ extension
from an appropriate multiple of
$\iota v_1^{4k} \cdot 2 \tau^2$.
For example, as shown in Figure \ref{fig:L-Einfty:1}, there is a hidden
$\h$ extension from
$\iota v_1^{4k} \cdot 16 \tau^2$ to $\rho^2 \cdot \tau h_1 v_1^8$.
\end{remark}

\begin{remark}
\label{rmk:L-hidden-(tauh1)^3}
The hidden $\h$ extension from $(\tau h_1)^3$ to 
$\iota \tau^4 \cdot \rho^2 h_1^6$ is $v_1^4$-periodic, but the situation
is more complicated than in Remarks \ref{rmk:L-hidden-v1^4} and
\ref{rmk:L-hidden-8tau^2v1^4i}.
For all $k \geq 0$, the element
$(\tau h_1)^2 \tau h_1 v_1^{4k}$ supports a hidden $\h$ extension
to the element of $E_\infty(L)$ of highest filtration in the
appropriate degree.  
For example, as shown in Figure \ref{fig:L-Einfty:7mod16},
there is a hidden $\h$ extension from
$(\tau h_1)^2 \cdot \tau^5 h_1$ to
$\iota \tau^8 \cdot \rho^3 h_1^7$.
Figures \ref{fig:L-Einfty:3mod8}, 
\ref{fig:L-Einfty:7mod16}, and \ref{fig:L-Einfty:15mod32}
show several extensions of this type.
\end{remark}

\section{Charts}
\label{sec:charts}

We explain the notation used in the charts.
\begin{itemize} 
\item 
The horizontal coordinate is the stem $s$.
The vertical coordinate is the Adams-Novikov filtration $f$
(see Section \ref{subsec:notation} for further discussion).
\item
Black or green circles represent copies of $\F_2$, periodicized 
by some power of $\tau$.  The relevant power of $\tau$
varies from chart to chart.
\item
Black or green unfilled boxes represent copies of $\Z$ (the $2$-adic integers),
periodicized by some power of $\tau$.
The relevant power of $\tau$ varies from chart to chart.
\item
Black or green boxes containing a number $n$ represent
copies of $\Z/2^n$,
periodicized by some power of $\tau$.
The relevant power of $\tau$ varies from chart to chart.
\item
Red unfilled boxes represent copies of $\Z$ (the $2$-adic integers)
that are not $\tau^k$-periodic for any $k$.
\item
Green objects represent elements in the image of the map
$\Sigma^{-1} \ko \to L$ (or $\Sigma^{-1} \ko^\C \to L^\C$).
\item
Black objects represent elements in the cokernel of the map
$\Sigma^{-1} \ko \to L$ (or $\Sigma^{-1} \ko^\C \to L^\C$).
In other words, they are detected by the map $L \to \ko$
(or $L^\C \to \ko^\C$).
\item
Lines of slope $1$ represent $h_1$-multiplications.
\item
Black or green arrows of slope $1$ represent infinite sequences of elements
that are $\tau^k$-periodic for some $k > 0$ and are
connected by $h_1$-multiplications.
\item
Red arrows of slope $1$ represent infinite sequences of elements
that are connected by $h_1$-multiplications and 
are not $\tau^k$-periodic for any $k$.
\item
Lines of slope $-1$ represent $\rho$-multiplications.
\item
Dashed lines of slope $-1$ represent $\rho$-multiplications
whose values are multiples of $\tau^k$ for some $k > 0$.
For example, in Figure \ref{fig:ko-Einfty:0},
we have $\rho \cdot \rho^3 v_1^4$ equals
$\tau^4 \cdot h_1^4$.
\item
Black or green arrows of slope $-1$ represent infinite sequences of elements
that are $\tau^k$-periodic for some $k > 0$ and are
connected by $\rho$-multiplications.
\item
Light blue lines of slope $-3$ represent effective $d_1$ differentials.
\item
Dashed light blue lines of slope $-3$ 
represent effective $d_1$ differentials that hit multiples of $\tau^k$,
for some $k > 0$.
For example, the dashed line in Figure \ref{fig:ko-C-E1}
indicates that $d_1(v_1^2)$ equals $\tau h_1^3$.
\item
Dark blue lines indicate hidden extensions by
$\h$, $\rho$, or $h_1$.  
\item
Dashed dark blue lines indicate hidden extensions whose
value is a multiple of $\tau^k$ for some $k > 0$.
For example, in Figure \ref{fig:L-C-Einfty}, 
there is a hidden $\h$ extension from $\iota 4 v_1^2$
to $\tau h_1^3$.
\end{itemize}

\begin{figure}[H]
\begin{center}
\makebox[\textwidth][c]{\includegraphics[trim={0cm, 2.0cm, 20pt, 20pt}, clip, page=1]{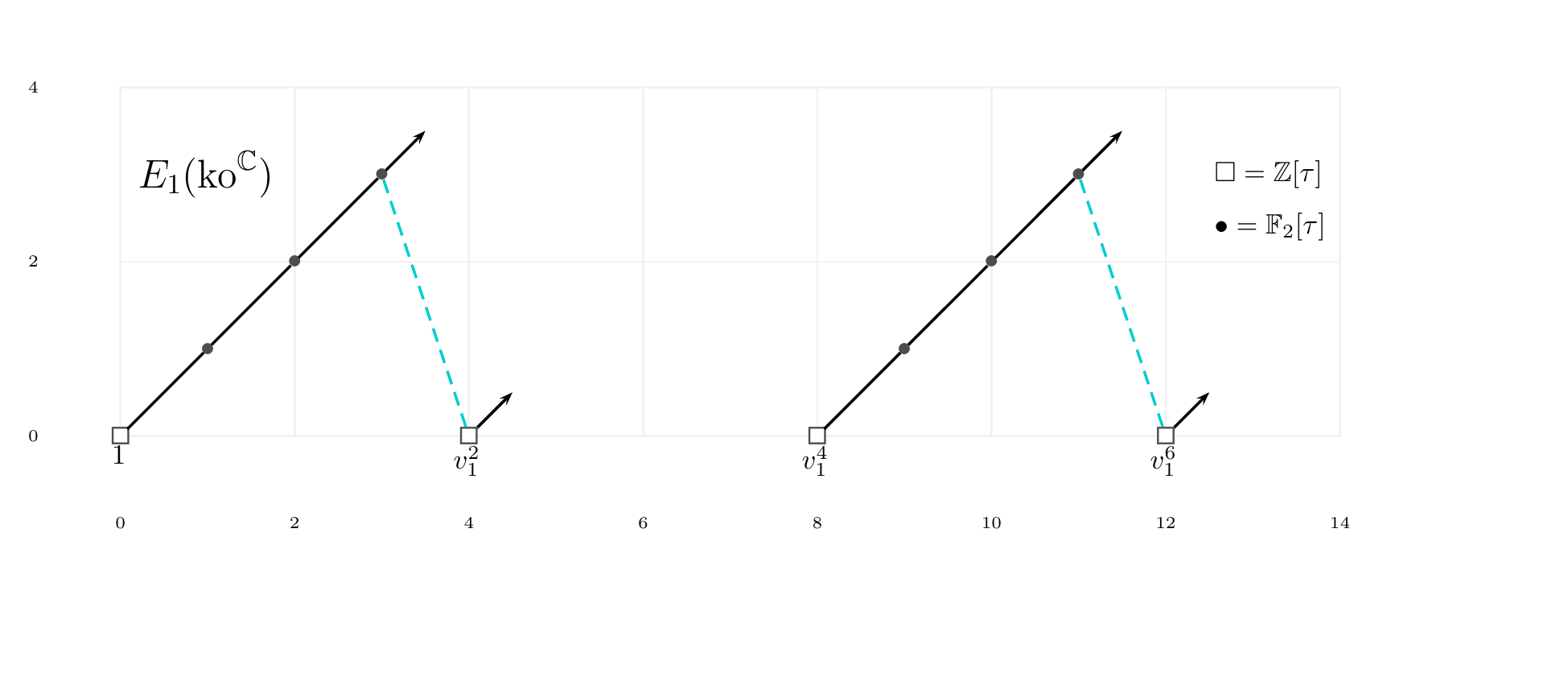}}
\caption{The $E_1$-page of the effective spectral sequence for
$\ko^\C$}
\label{fig:ko-C-E1}
\hfill
\end{center}
\end{figure}

\begin{figure}[H]
\begin{center}
\makebox[\textwidth][c]{\includegraphics[trim={0cm, 2.0cm, 20pt, 20pt}, clip, page=1]{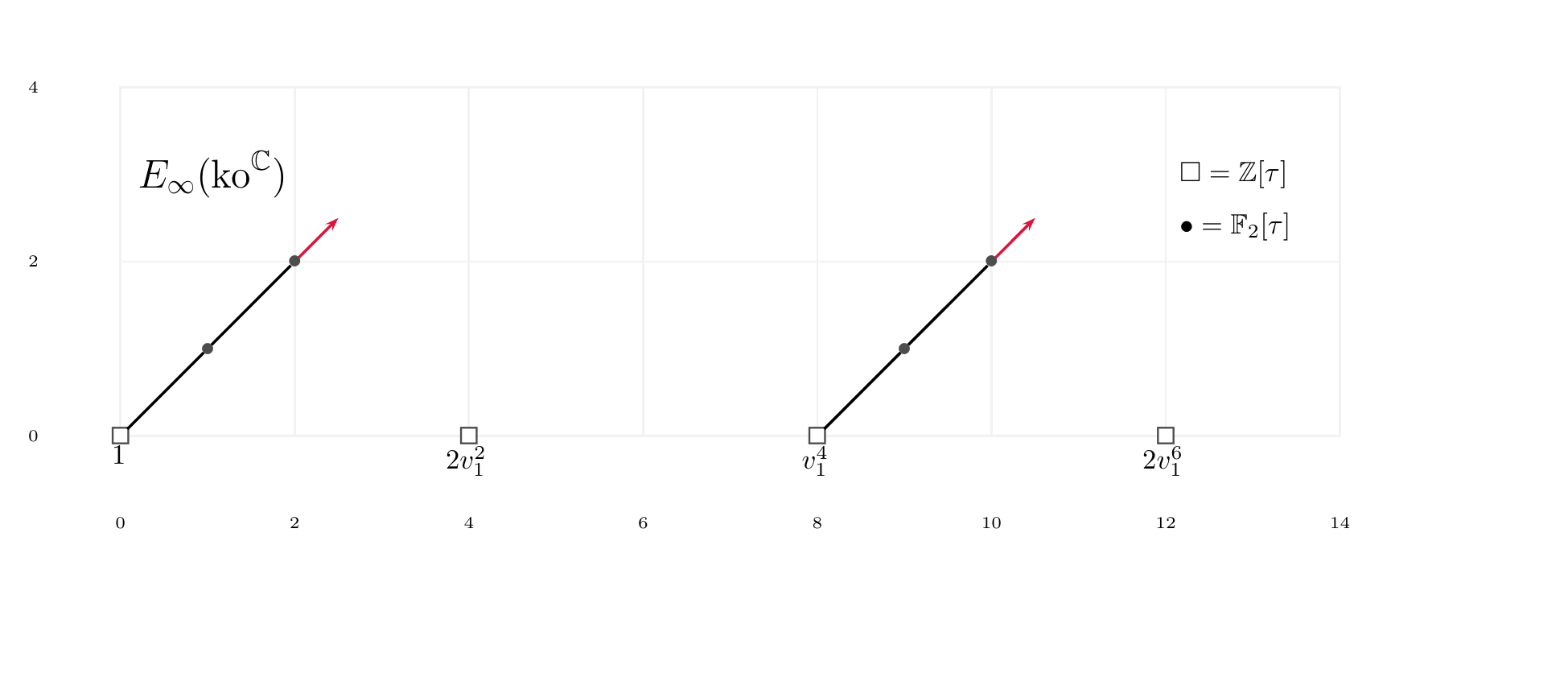}}
\caption{The $E_\infty$-page of the effective spectral sequence for
$\ko^\C$}
\label{fig:ko-C-Einfty}
\hfill
\end{center}
\end{figure}

\begin{figure}[H]
\begin{center}
\makebox[\textwidth][c]{\includegraphics[trim={0cm, 2.0cm, 20pt, 20pt}, clip, page=1]{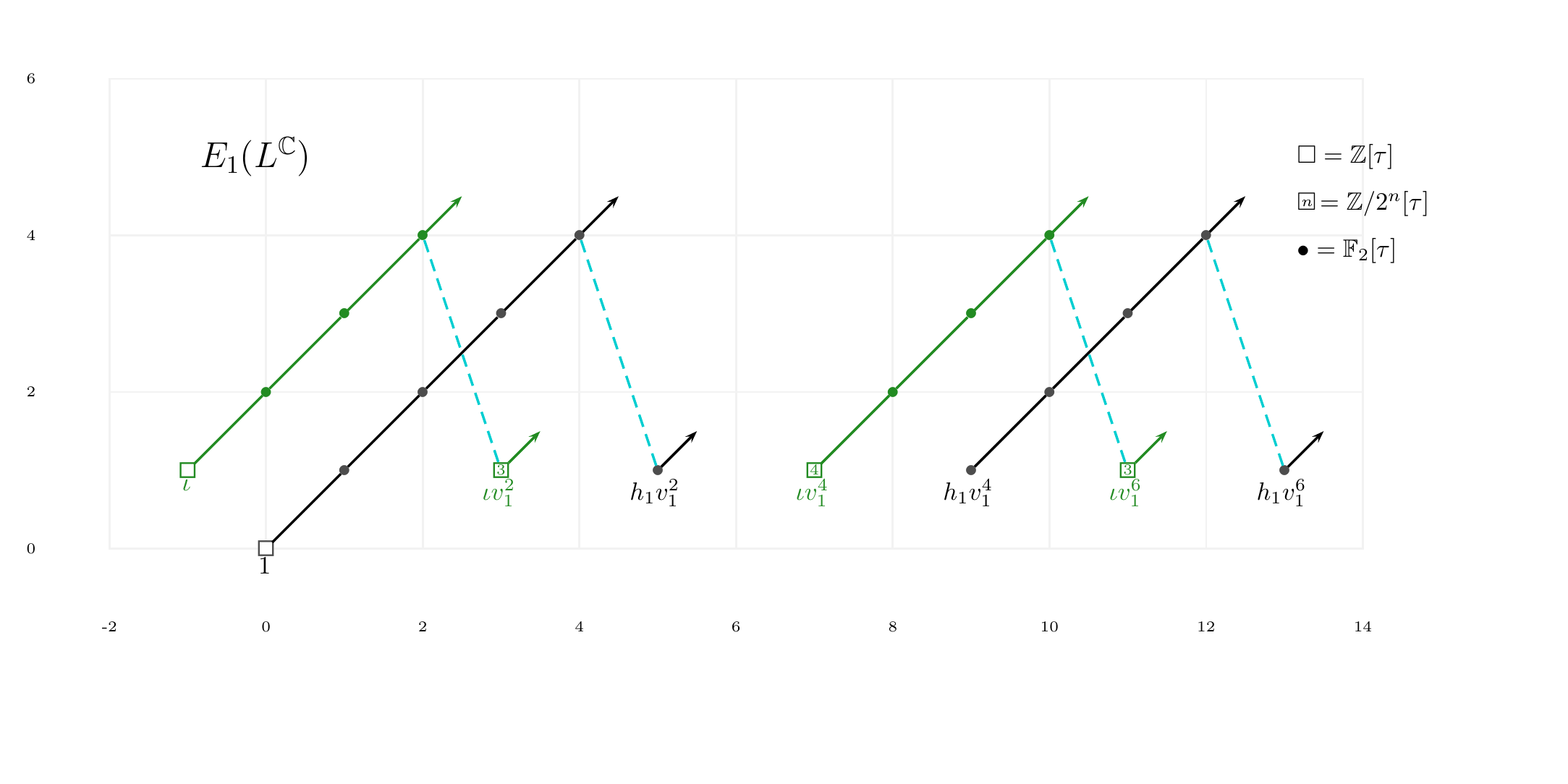}}
\caption{The $E_1$-page of the effective spectral sequence for
$L^\C$}
\label{fig:L-C-E1}
\hfill
\end{center}
\end{figure}

\begin{figure}[H]
\begin{center}
\makebox[\textwidth][c]{\includegraphics[trim={0cm, 2.0cm, 20pt, 20pt}, clip, page=1]{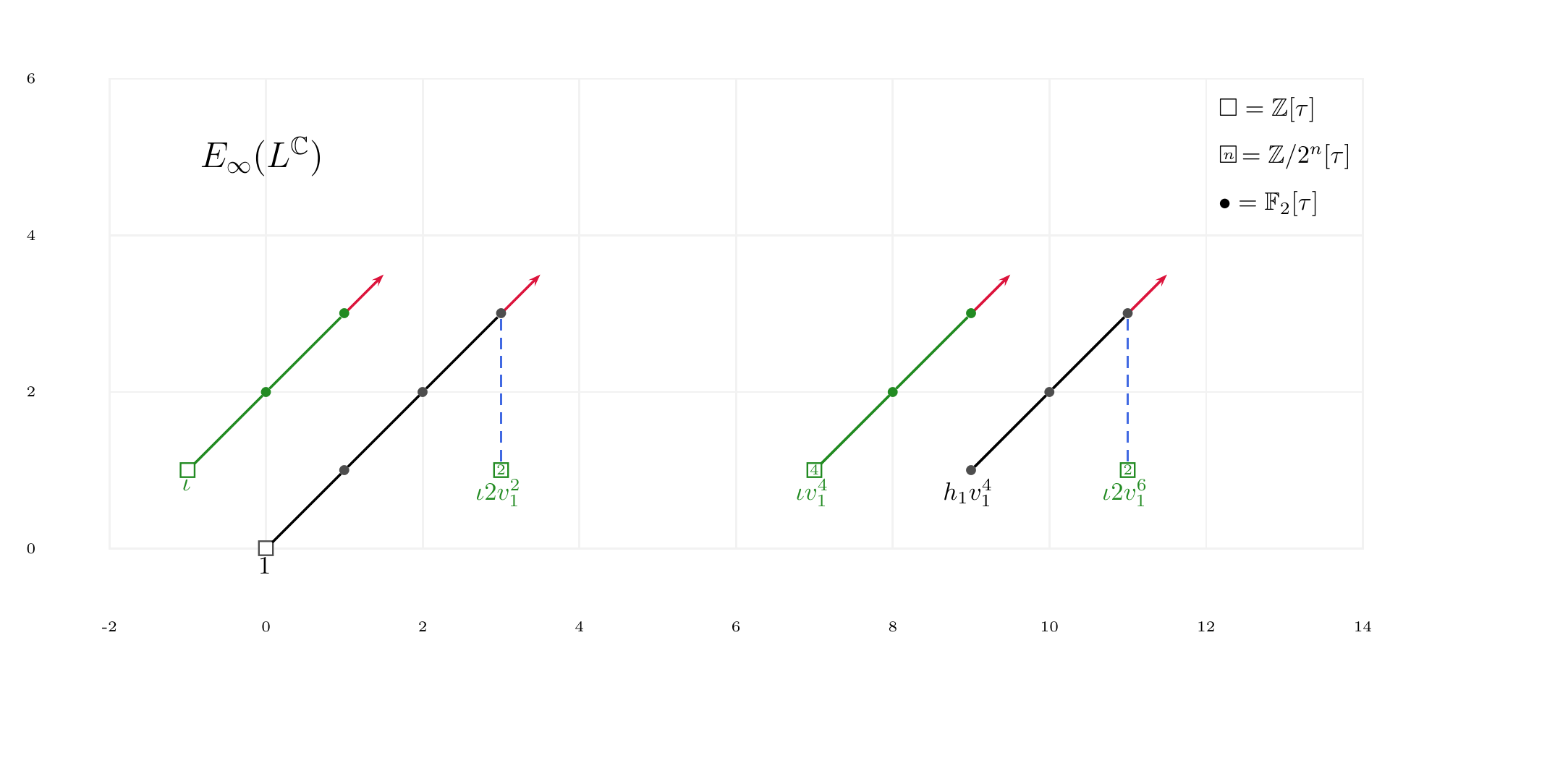}}
\caption{The $E_\infty$-page of the effective spectral sequence for
$L^\C$}
\label{fig:L-C-Einfty}
\hfill
\end{center}
\end{figure}

\begin{figure}[H]
\begin{center}
\makebox[\textwidth][c]{\includegraphics[trim={0cm, 2.0cm, 20pt, 20pt}, clip, page=1]{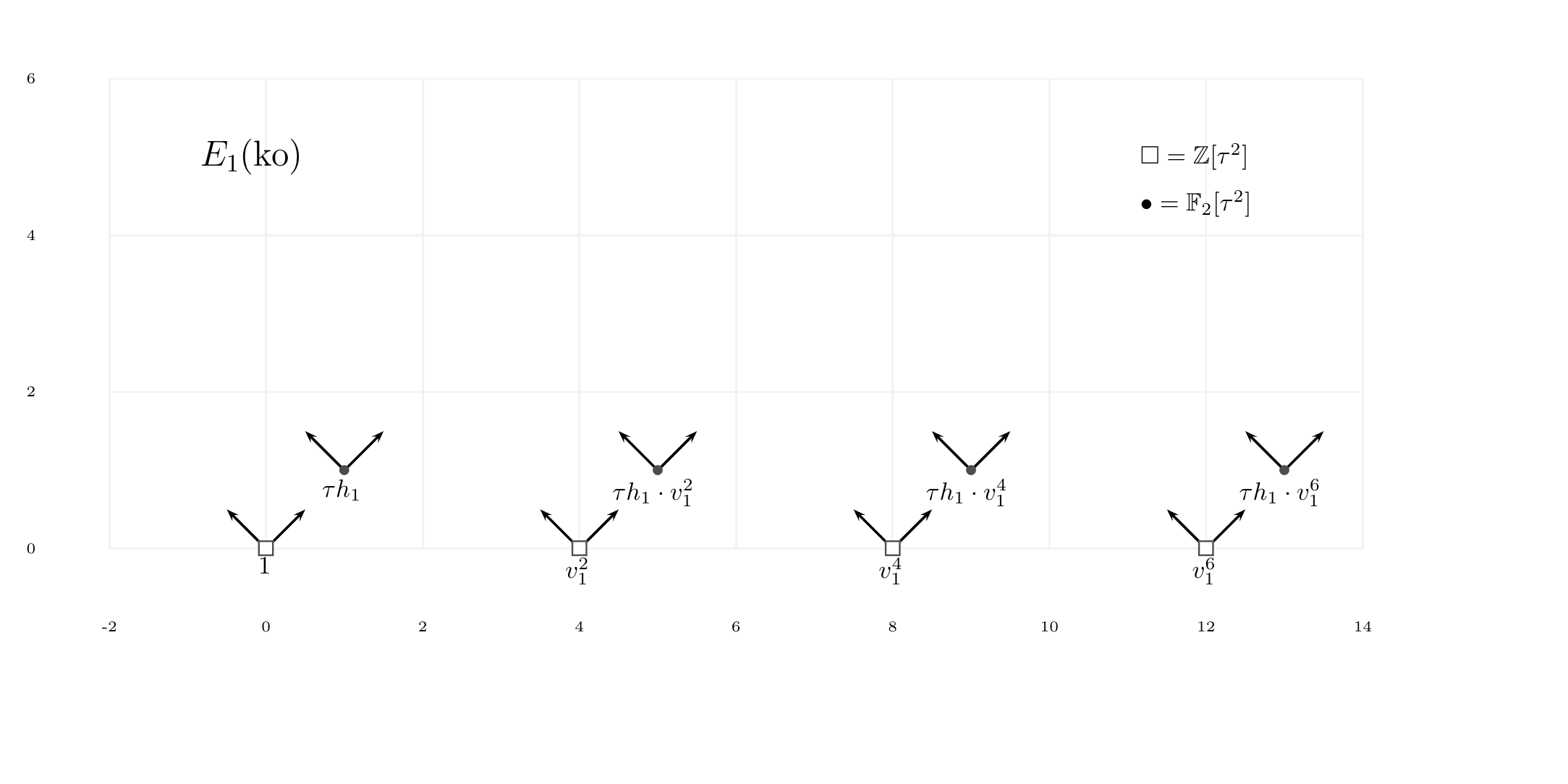}}
\caption{The $E_1$-page of the effective spectral sequence for
$\ko$}
\label{fig:ko-E1}
\hfill
\end{center}
\end{figure}

\begin{figure}[H]
\begin{center}
\makebox[\textwidth][c]{\includegraphics[trim={0cm, 2.0cm, 20pt, 20pt}, clip, page=1]{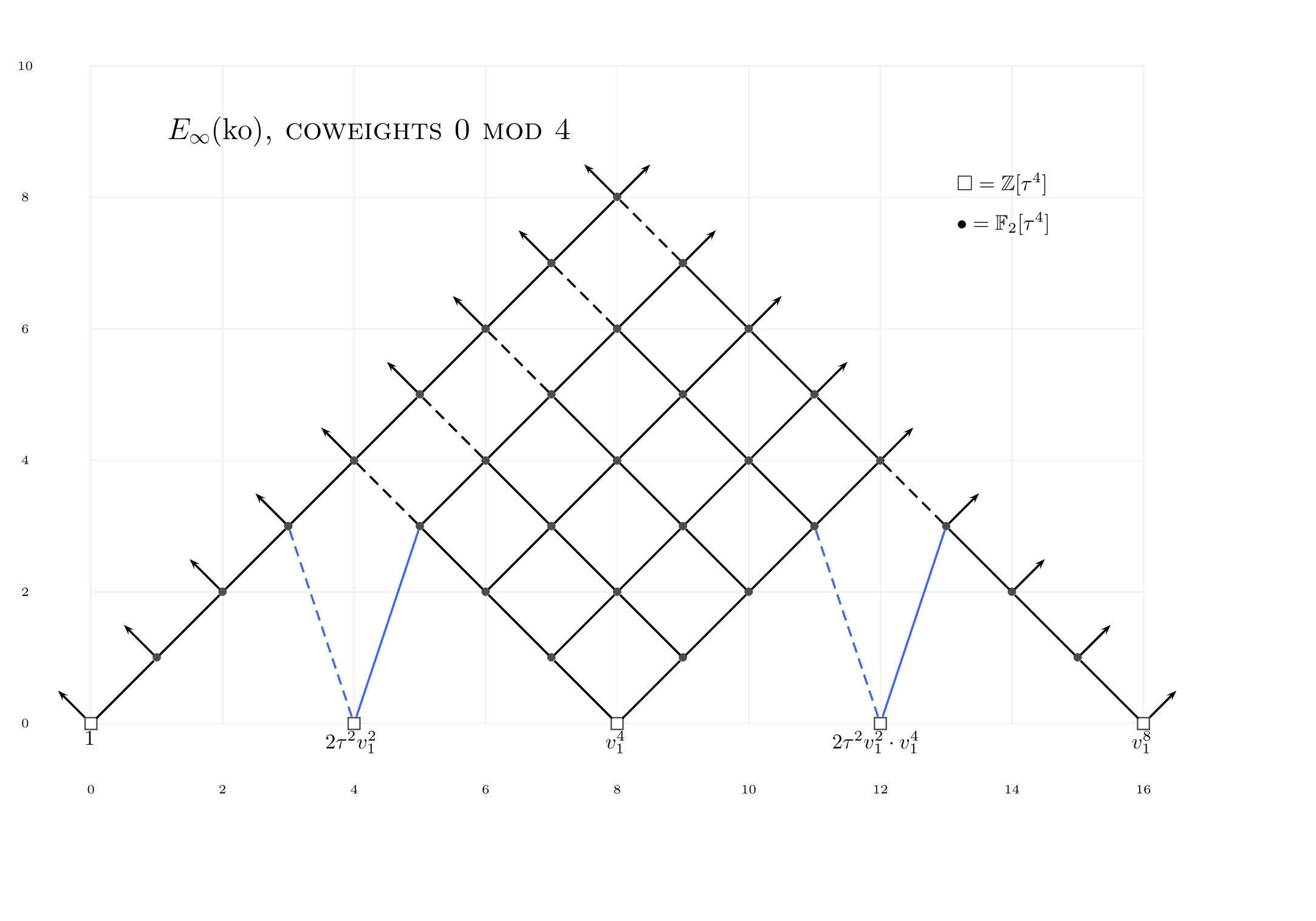}}
\caption{The $E_\infty$-page of the effective spectral sequence for
$\ko$ in coweights $0$ mod $4$}
\label{fig:ko-Einfty:0}
\hfill
\end{center}
\end{figure}

\KOMAoption{paper}{landscape,10.4in:11.0in}
\KOMAoption{DIV}{last}
\newgeometry{margin=0in,top=0.5in,footskip=0.3in}

\begin{figure}[H]
\begin{center}
\makebox[\textwidth][c]{\includegraphics[trim={0cm, 2.0cm, 20pt, 20pt}, clip, page=1]{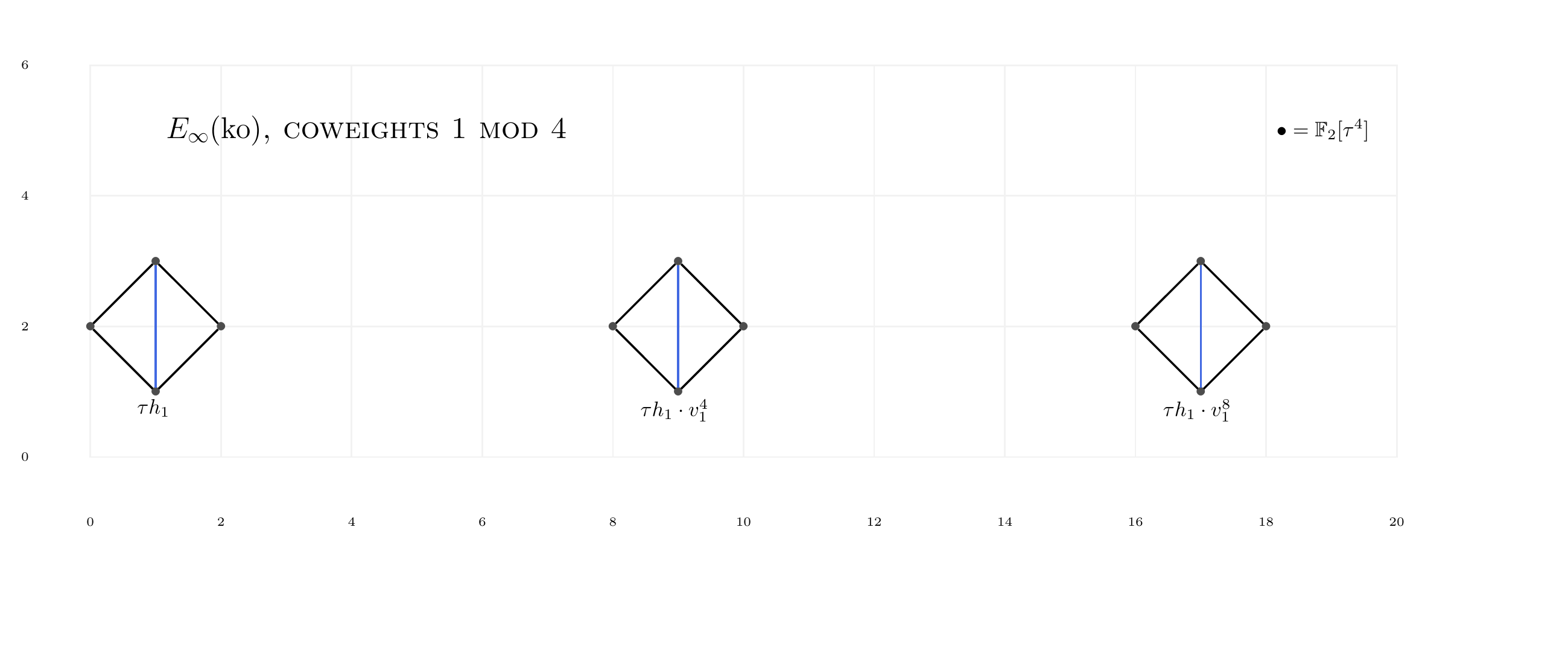}}
\caption{The $E_\infty$-page of the effective spectral sequence for
$\ko$ in coweights $1$ mod $4$}
\label{fig:ko-Einfty:1}
\hfill
\end{center}
\end{figure}

\begin{figure}[H]
\begin{center}
\makebox[\textwidth][c]{\includegraphics[trim={0cm, 2.0cm, 20pt, 20pt}, clip, page=1]{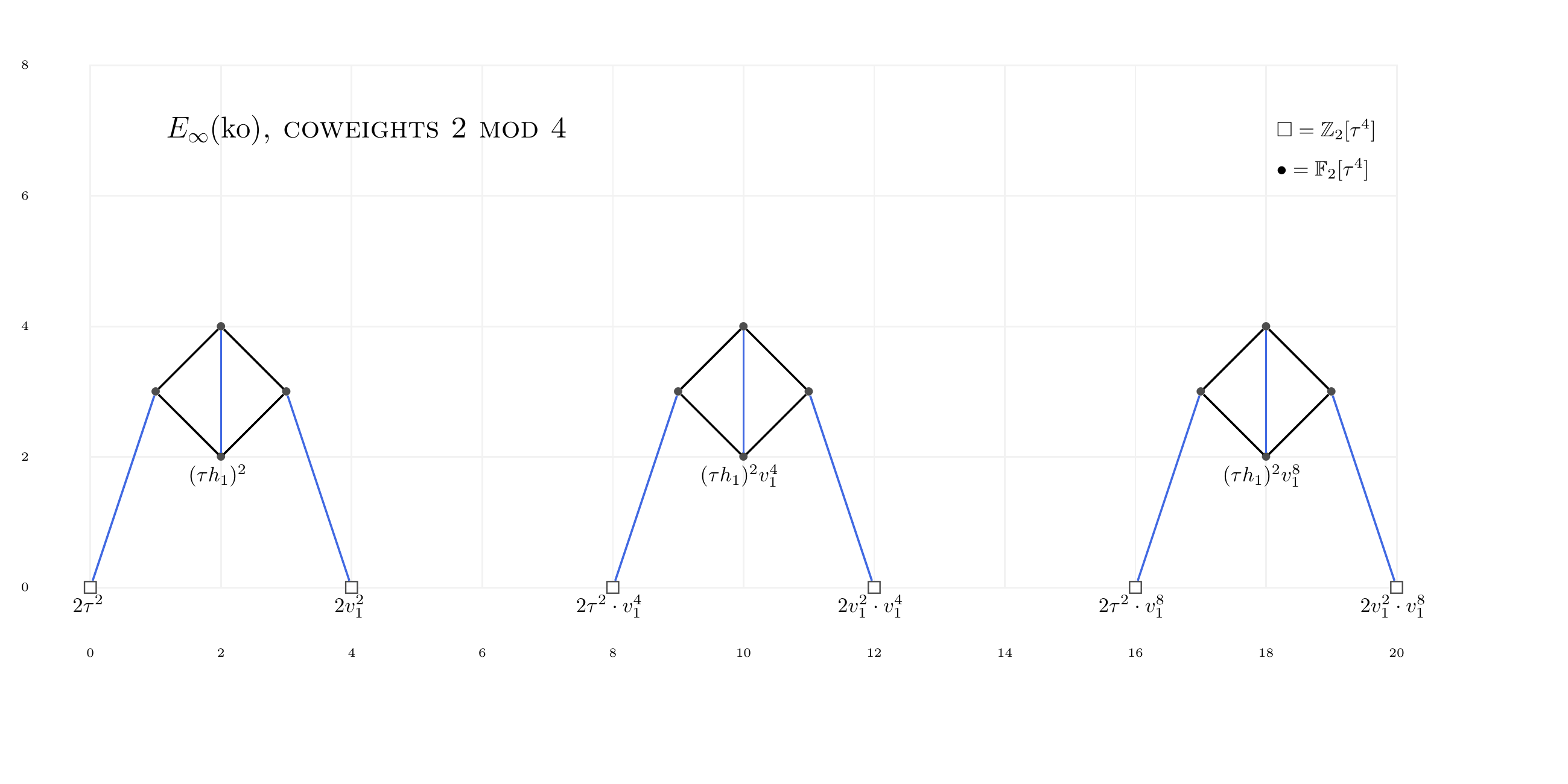}}
\caption{The $E_\infty$-page of the effective spectral sequence for
$\ko$ in coweights $2$ mod $4$}
\label{fig:ko-Einfty:2}
\hfill
\end{center}
\end{figure}

\newpage

\newgeometry{margin=0.5in,footskip=0.3in}

\begin{figure}[H]
\begin{center}
{\includegraphics[trim={0cm, 2.0cm, 20pt, 20pt}, clip, page=1]{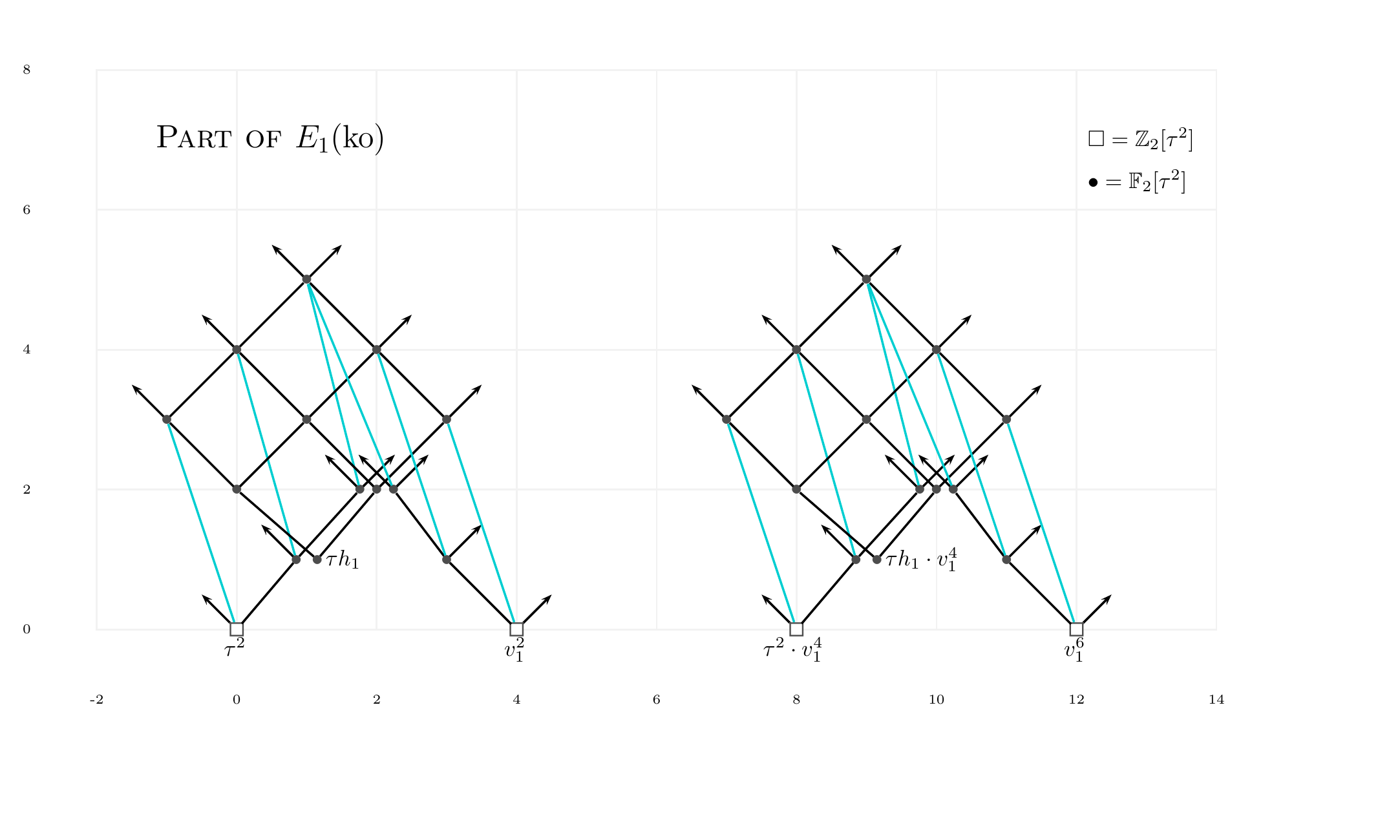}}
\caption{Some differentials in the effective spectral sequence
for $\ko$}
\label{fig:ko-d1:2-1}
\hfill
\end{center}
\end{figure}

\begin{figure}[H]
\begin{center}
{\includegraphics[trim={10pt, 2.0cm, 20pt, 20pt}, clip, page=1]{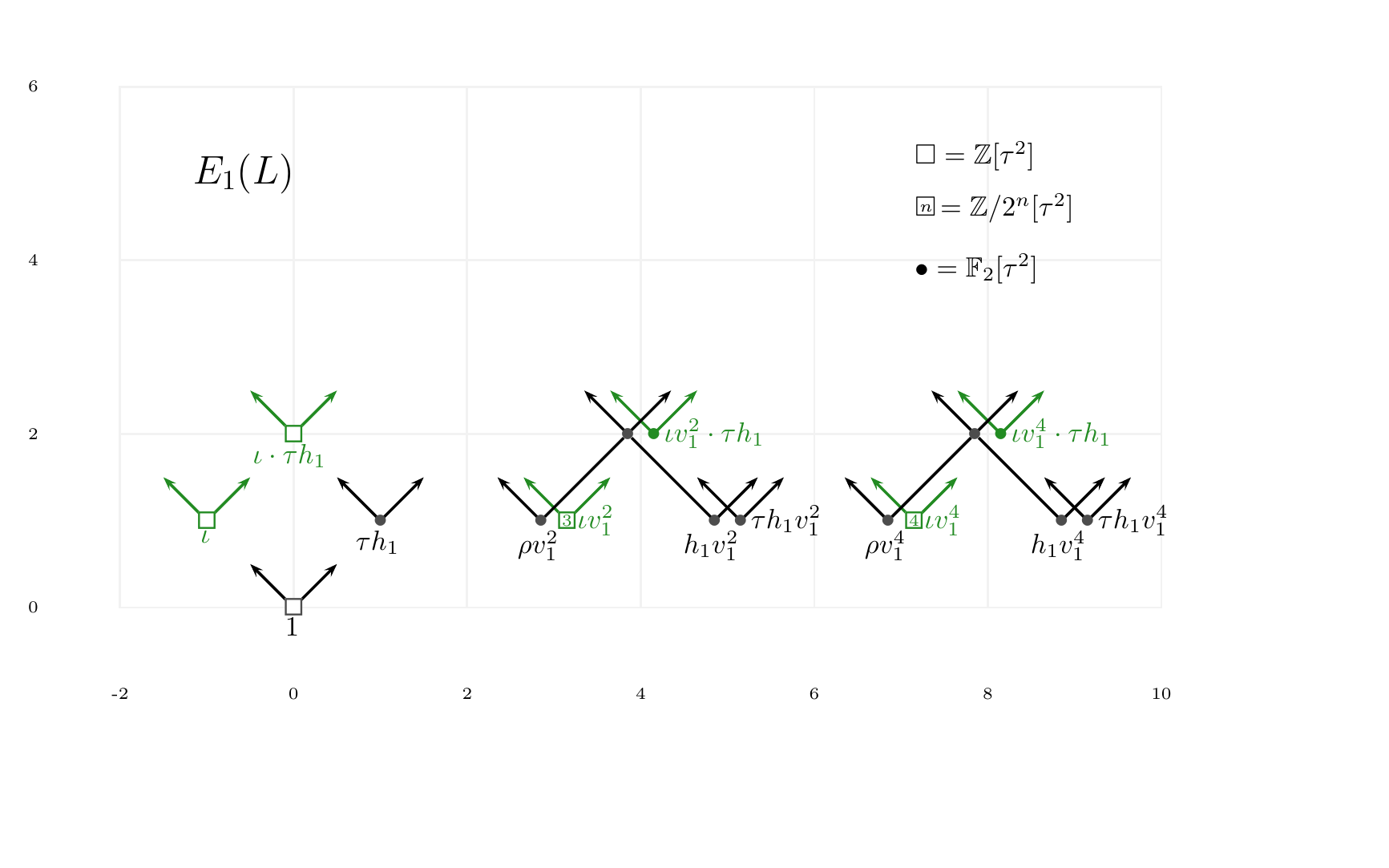}}
\caption{The $E_1$-page of the effective spectral sequence for $L$}
\label{fig:L-E1}
\hfill
\end{center}
\end{figure}

\KOMAoption{paper}{landscape,10.4in:7.0in}
\KOMAoption{DIV}{last}
\newgeometry{margin=0in,footskip=0.3in}

\begin{figure}[H]
\begin{center}
{\includegraphics[trim={0cm, 2.0cm, 20pt, 20pt}, clip, page=1]{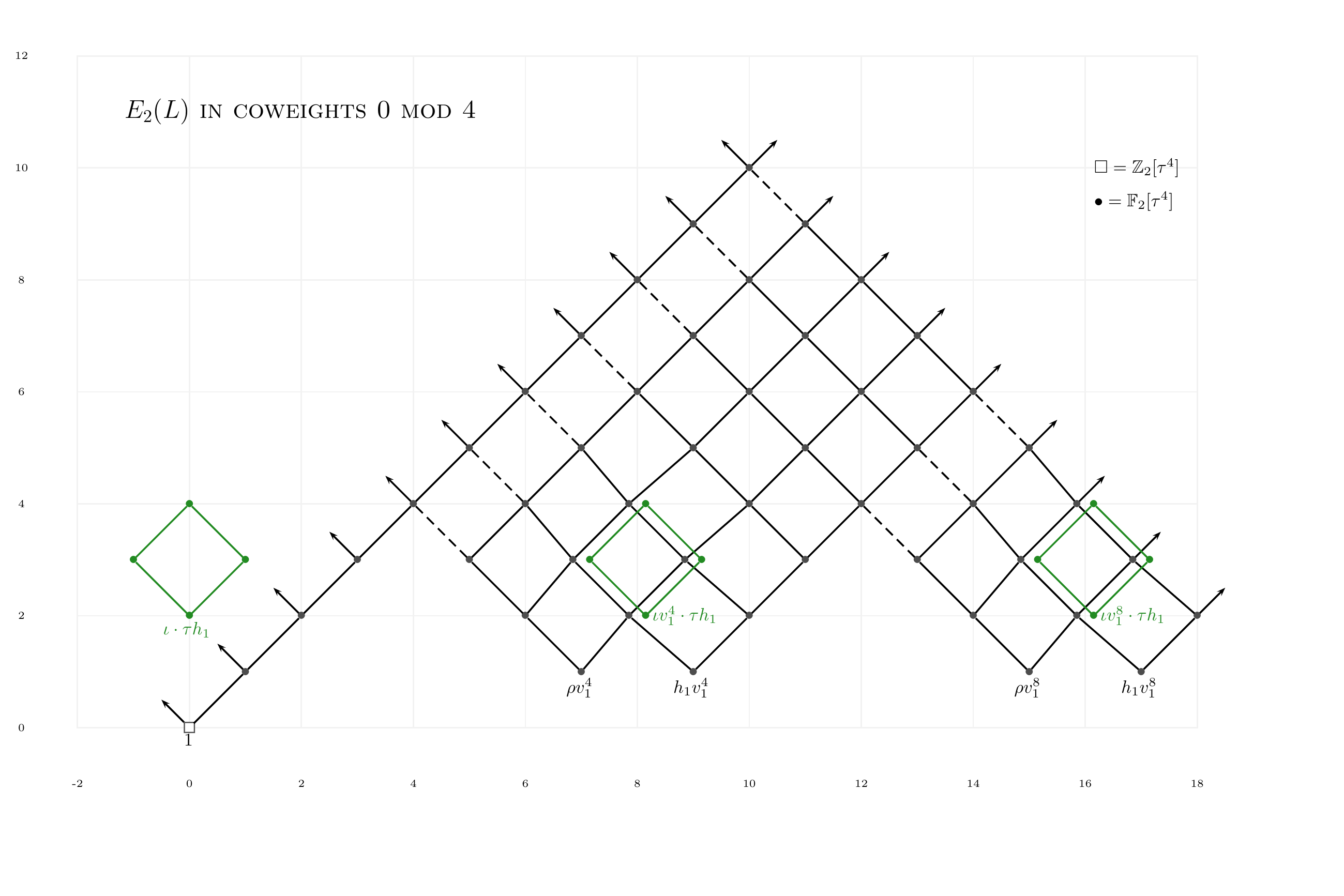}}
\caption{The $E_2$-page of the effective
spectral sequence for $L$ in coweights $0$ mod $4$}
\label{fig:L-E2:0}
\hfill
\end{center}
\end{figure}

\KOMAoption{paper}{landscape,9.6in:7.0in}
\KOMAoption{DIV}{last}
\newgeometry{margin=0in,footskip=0.3in}

\begin{figure}[H]
\begin{center}
{\includegraphics[trim={0cm, 2.0cm, 20pt, 20pt}, clip, page=1]{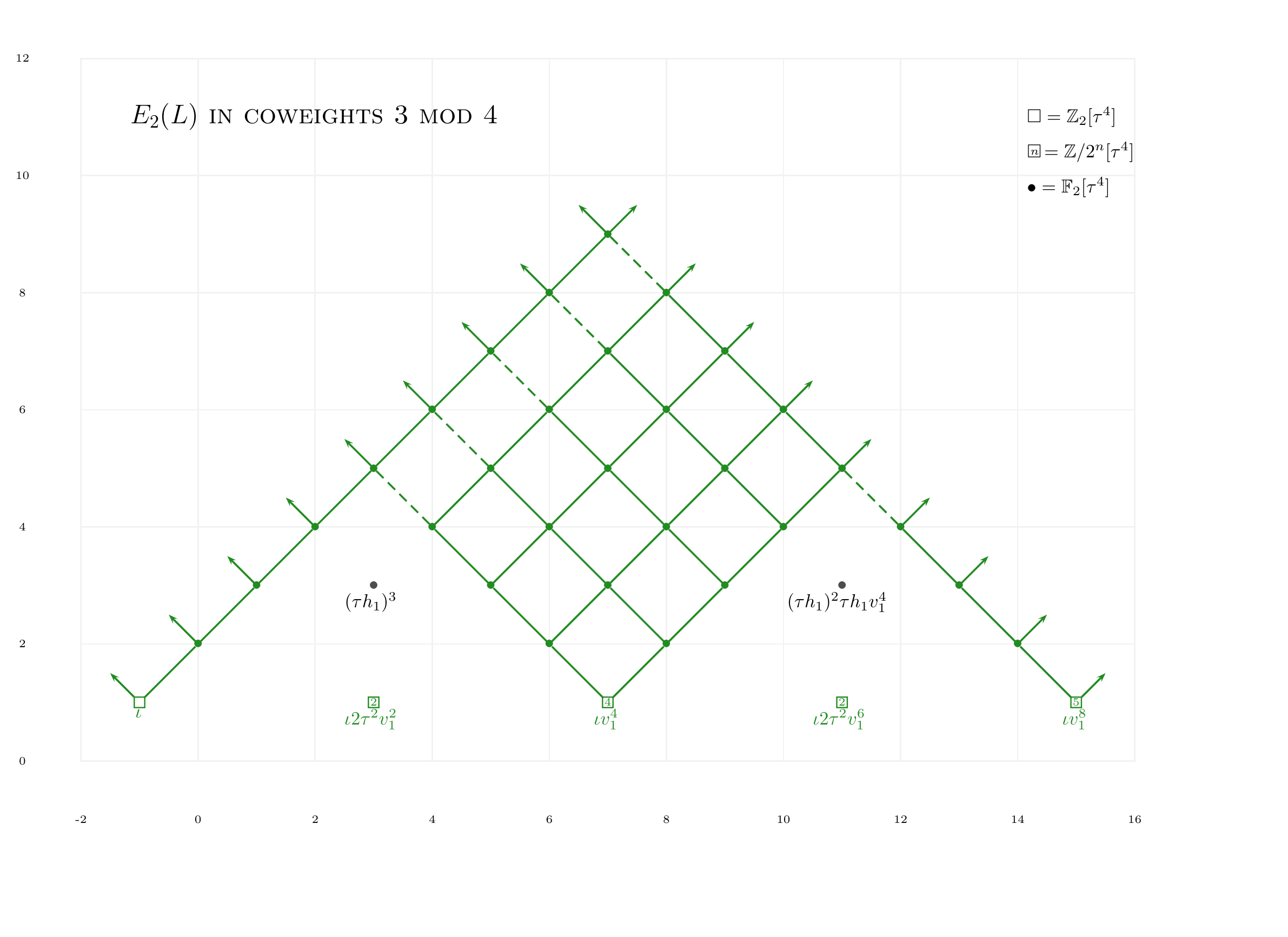}}
\caption{The $E_2$-page of the effective
spectral sequence for $L$ in coweights $3$ mod $4$}
\label{fig:L-E2:3}
\hfill
\end{center}
\end{figure}

\KOMAoption{paper}{landscape,11.3in:10.2in}
\KOMAoption{DIV}{last}
\newgeometry{margin=0in,footskip=0.3in}

\begin{figure}[H]
\begin{center}
{\includegraphics[trim={0cm, 0.0cm, 0pt, -10pt}, clip, page=1]{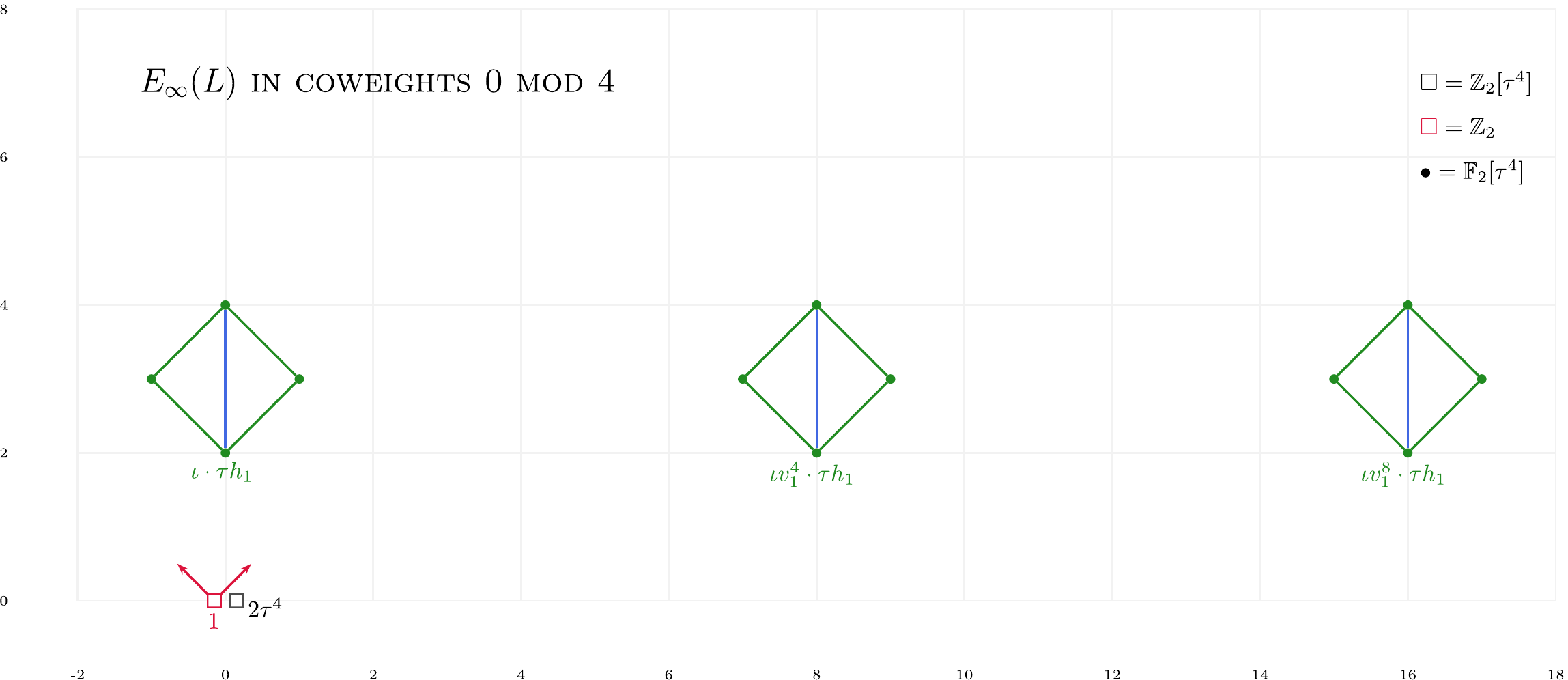}}
\caption{The $E_\infty$-page of the effective
spectral sequence for $L$ in coweights $0$ mod $4$}
\label{fig:L-Einfty:0}
\hfill
\end{center}
\end{figure}

\begin{figure}[H]
\begin{center}
{\includegraphics[trim={0cm, 2.0cm, 20pt, 20pt}, clip, page=1]{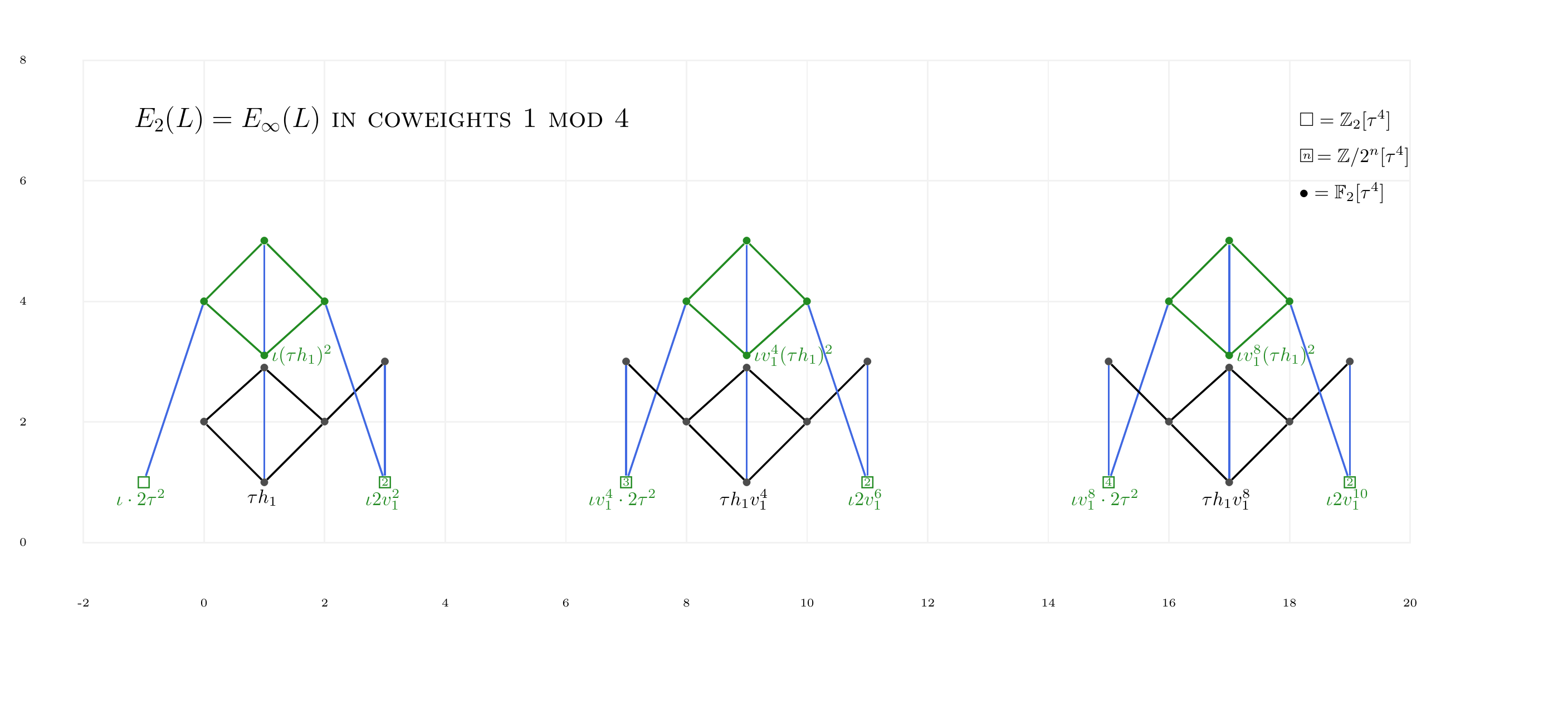}}
\caption{The $E_\infty$-page of the effective
spectral sequence for $L$ in coweights $1$ mod $4$}
\label{fig:L-Einfty:1}
\hfill
\end{center}
\end{figure}

\KOMAoption{paper}{landscape,10.4in:8.2in}
\KOMAoption{DIV}{last}
\newgeometry{margin=0in,footskip=0.3in}

\begin{figure}[H]
\begin{center}
{\includegraphics[trim={0cm, 2.0cm, 20pt, 20pt}, clip, page=1]{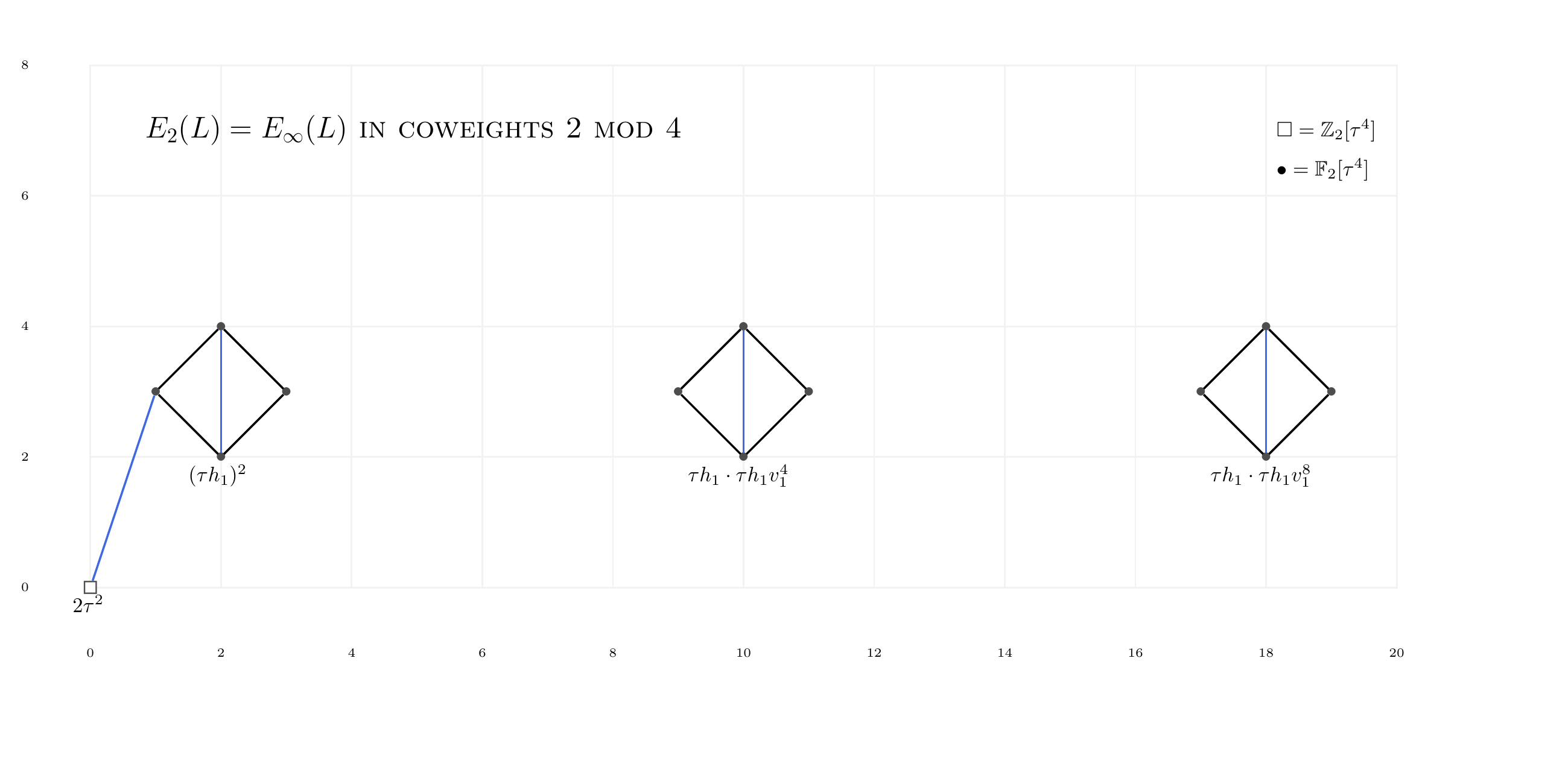}}
\caption{The $E_\infty$-page of the effective
spectral sequence for $L$ in coweights $2$ mod $4$}
\label{fig:L-Einfty:2}
\hfill
\end{center}
\end{figure}

\begin{figure}[H]
\begin{center}
{\includegraphics[trim={0cm, 0pt, 20pt, 20pt}, clip, page=1]{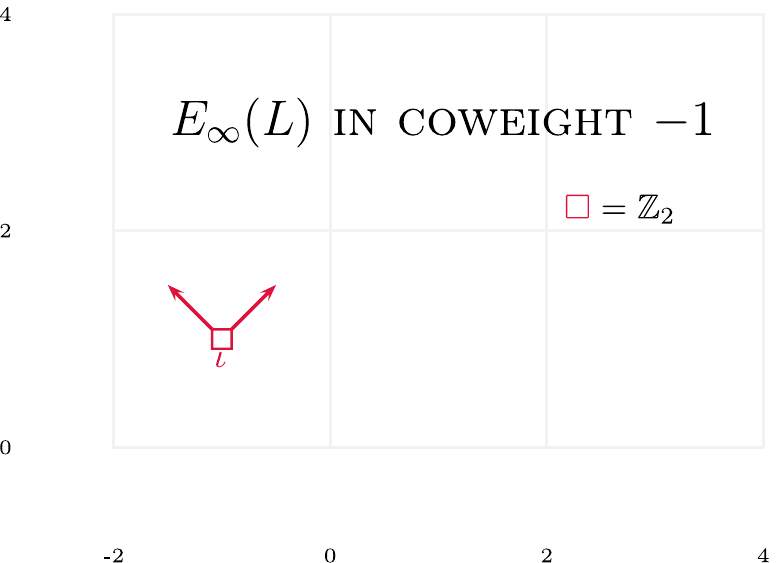}}
\caption{The $E_\infty$-page of the effective
spectral sequence for $L$ in coweight $-1$}
\label{fig:L-Einfty:-1}
\hfill
\end{center}
\end{figure}

\KOMAoption{paper}{landscape,20in:7.8in}
\KOMAoption{DIV}{last}
\newgeometry{margin=0in,footskip=0.3in}

\begin{figure}[H]
\begin{center}
{\includegraphics[trim={0cm, 2.0cm, 20pt, 20pt}, clip, page=1]{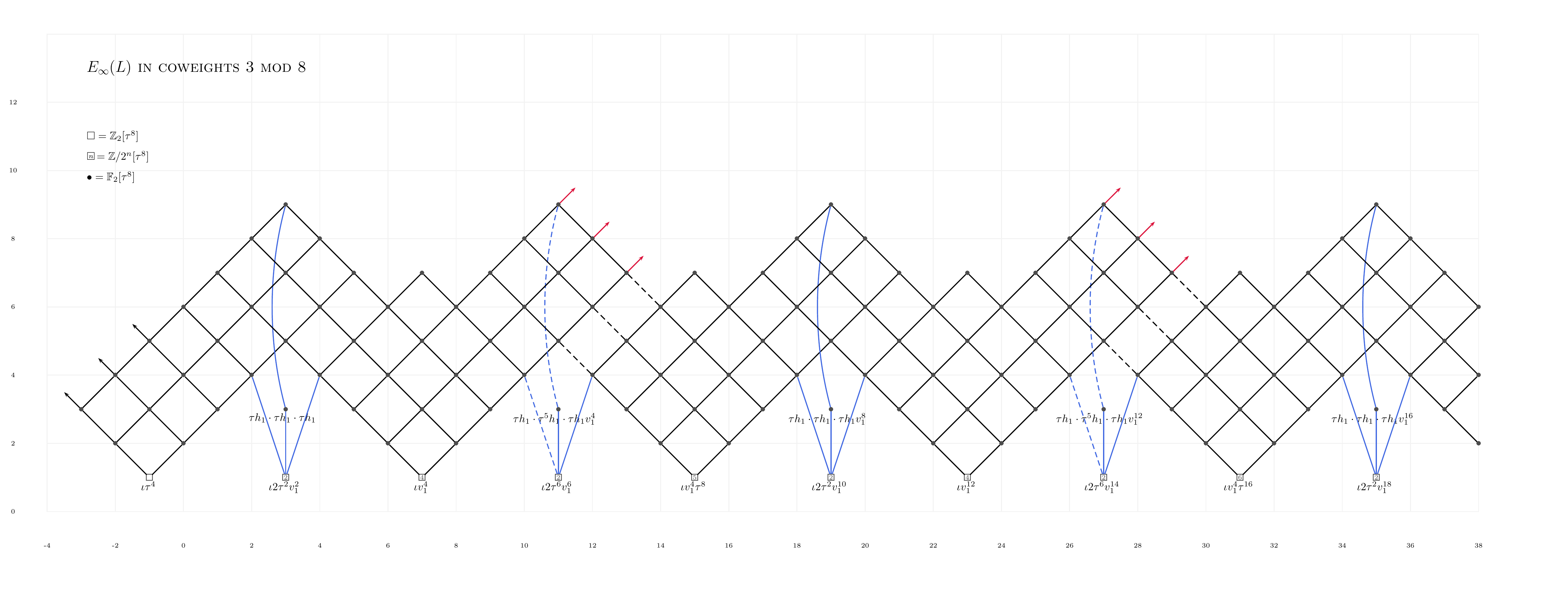}}
\caption{The $E_\infty$-page of the effective
spectral sequence for $L$ in coweights $3$ mod $8$}
\label{fig:L-Einfty:3mod8}
\hfill
\end{center}
\end{figure}

\KOMAoption{paper}{landscape,33.78in:8.66in}
\KOMAoption{DIV}{last}
\newgeometry{margin=0in,footskip=0.3in}

\begin{figure}[H]
\begin{center}
{\includegraphics[trim={0cm, 2.0cm, 20pt, 20pt}, clip, page=1]{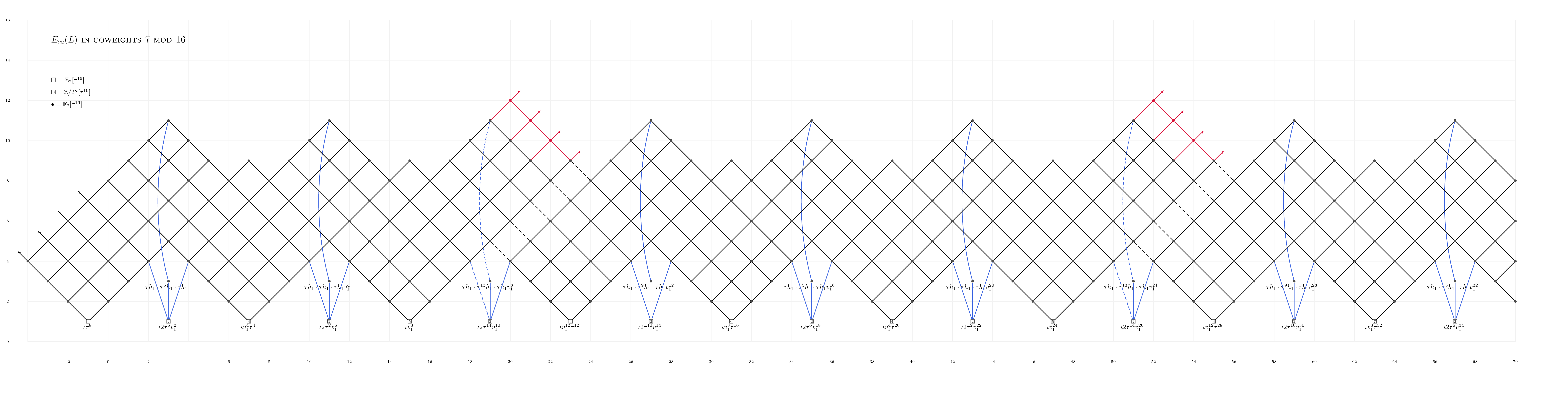}}
\caption{The $E_\infty$-page of the effective
spectral sequence for $L$ in coweights $7$ mod $16$}
\label{fig:L-Einfty:7mod16}
\hfill
\end{center}
\end{figure}

\KOMAoption{paper}{landscape,23.4in:10.4in}
\KOMAoption{DIV}{last}
\newgeometry{margin=0in,footskip=0.0in}

\begin{figure}[H]
\begin{center}
{\includegraphics[trim={0cm, 2.0cm, 20pt, 20pt}, clip, page=1]{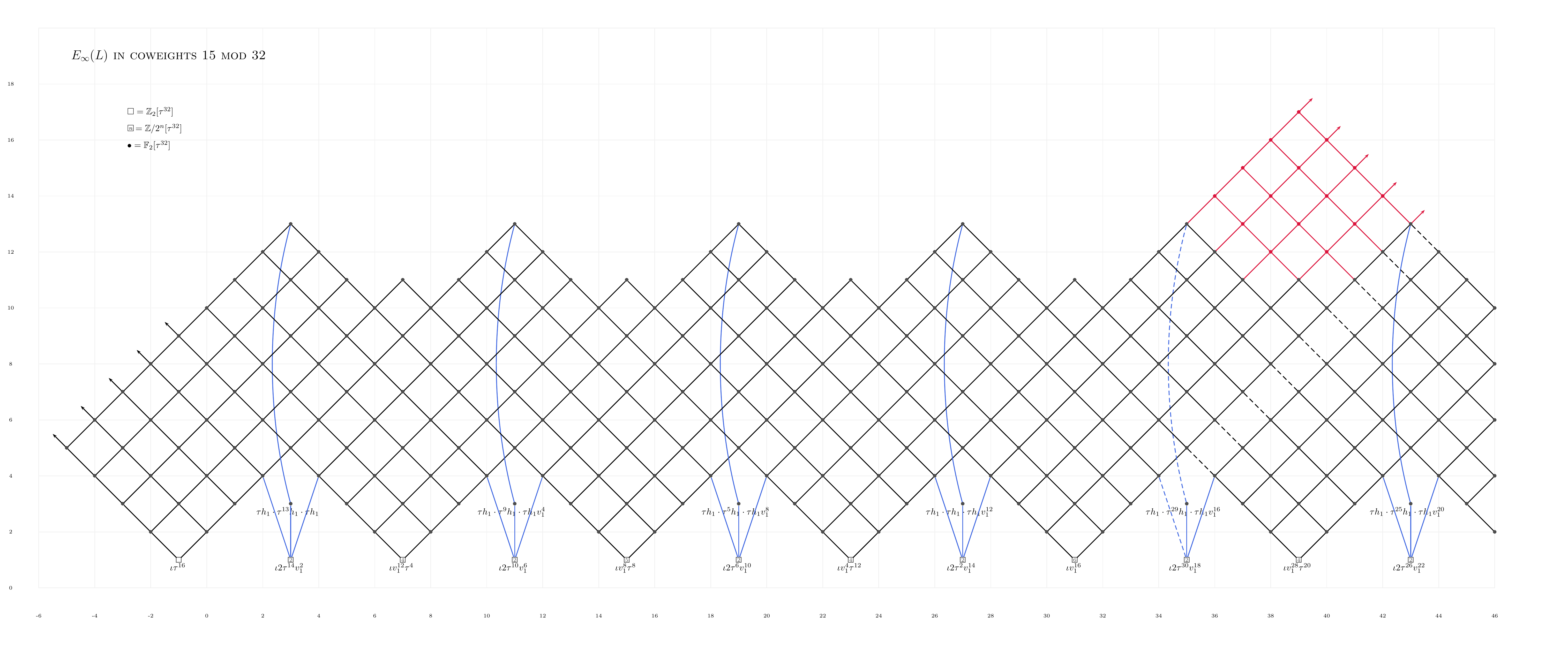}}
\caption{The $E_\infty$-page of the effective
spectral sequence for $L$ in coweights $15$ mod $32$}
\label{fig:L-Einfty:15mod32}
\hfill
\end{center}
\end{figure}

\KOMAoption{paper}{portrait,8.5in:11in}
\KOMAoption{DIV}{last}
\newgeometry{margin=1in,footskip=0.5in}

\bibliographystyle{alpha}
\bibliography{slice-v1.bib}

\end{document}